\theoremstyle{plain}
\newtheorem{theorem}[equation]{Theorem}
\newtheorem{corollary}[equation]{Corollary}
\newtheorem{proposition}[equation]{Proposition}
\theoremstyle{definition}
\theoremstyle{remark}
\newtheoremstyle{indenteddefinition}{\topsep}{\topsep}{\addtolength{\leftskip}{2.0em}}{-0em}{\bfseries}{.}{
}{}
\theoremstyle{indenteddefinition}
\newtheorem{definition}[equation]{Definition}
\DeclareMathOperator\Lie{Lie}
\DeclareMathOperator\Ad{Ad}
\DeclareMathOperator\Aut{Aut}
\DeclareMathOperator\Hom{Hom}
\DeclareMathOperator\Max{Max}
\DeclareMathOperator\Ind{Ind}
\DeclareMathOperator\End{End}
\DeclareMathOperator\tr{tr}
\DeclareMathOperator\RE{Re}
\DeclareMathOperator\diag{diag}
\DeclareMathOperator\Spec{Spec}
\DeclareMathOperator\Spin{Spin}
\DeclareMathOperator\spin{spin}
\DeclareMathOperator\sgn{sgn}
\newcommand\Tstrut{\rule{0pt}{2.6ex}}
\newcommand\Bstrut{\rule[-0.9ex]{0pt}{0pt}} 
\begin{document}
\author{Henrik Schlichtkrull}
\address[Henrik Schlichtkrull]{Department of Mathematics, University of
Copenhagen\\ Universitetsparken 5, DK-2100 Copenhagen \O}
\email{schlicht@math.ku.dk}
\author{Peter Trapa}\thanks{The second author was supported in part by
    NSF grant DMS-1302237.}
\address[Peter Trapa]{Department of Mathematics, University of Utah\\
  Salt Lake City, UT 84112}
\email{ptrapa@math.utah.edu}
\author{David A. Vogan, Jr.}
\address[David Vogan]{Department of Mathematics, MIT \\ Cambridge, MA 02139}
\email{dav@math.mit.edu}
\title{Laplacians on spheres}
\date{\today}
\maketitle
\begin{abstract}
Spheres can be written as homogeneous spaces $G/H$ for compact Lie
groups in a small number of ways. In each case, the decomposition of
$L^2(G/H)$ into irreducible representations of $G$ contains
interesting information. We recall these decompositions, and see what
they can reveal about the analogous problem for noncompact real forms
of $G$ and $H$.
\end{abstract}

\bigskip
\noindent This paper is dedicated to Joe Wolf, in honor of all that we
have learned from him about the connections among geometry,
representation theory, and harmonic analysis; and in gratitude for
wonderful years of friendship.

\smallskip
\section{Introduction}\label{sec:intro}
\setcounter{equation}{0}
The sphere has a Riemannian metric, unique up to a positive scale,
that is preserved by the action of the orthogonal group. Computing the
spectrum of the Laplace operator is a standard and beautiful
application of representation theory. These notes will look at some
variants of this computation, related to interesting subgroups of the
orthogonal group.

The four variants presented in Sections \ref{sec:R}, \ref{sec:C},
\ref{sec:H}, and \ref{sec:O} correspond to the following very general
fact, due to \'Elie Cartan: if $G/K$ is an irreducible compact
Riemannian symmetric space of real rank $1$, then $K$ is
transitive on the unit sphere in $T_{eK}(G/K)$. (The only caveat is in
the case of the one-dimensional symmetric space ${S^1}$. In this case
one needs to use the full isometry group $O(2)$ rather than its
identity component to get the transitivity.) The isotropy group of
a point on the sphere is often called $M$ in the theory; so the
conclusion is that
\begin{equation}\label{e:rank1}
  \text{sphere of dimension ($\dim G/K -1$)} \simeq K/M
\end{equation}
The calculations we do correspond to the rank one symmetric spaces
\begin{alignat*}{5}
  &O(n+1)/O(n), && S^{n-1} \simeq O(n)/O(n-1) \qquad &&\text{(Section
    \ref{sec:R}),}\\
  &SU(n+1)/U(n),&& S^{2n-1} \simeq U(n)/U(n-1) &&\text{(Section \ref{sec:C}),}\\[.5ex]
  &Sp(n+1)/[Sp(n)\times Sp(1)],\quad &&\begin{matrix} S^{4n-1} \simeq
    [Sp(n)\times Sp(1)]/ \\[.2ex] \qquad [Sp(n-1)\times
      Sp(1)_\Delta] \end{matrix} &&\text{(Section
    \ref{sec:H}), and}\\[.5ex]
  &F_4/\Spin(9) && S^{15} \simeq \Spin(9)/\Spin(7)' &&\text{(Section
    \ref{sec:O}).}
\end{alignat*}

The representations of $O(n)$, $U(n)$, $Sp(n)\times Sp(1)$, and
$\Spin(9)$ that we are computing are exactly the $K$-types of the
spherical principal series representations for the noncompact forms of
the symmetric spaces.

Rank one symmetric spaces provide three infinite families (and one
exceptional example) of realizations of spheres as homogeneous spaces
(for compact Lie groups). A theorem due to Montgomery-Samelson and
Borel (\cite{MS} and \cite{BOct}; there is a nice account in
\cite{Wolf}*{(11.3.17)}) classifies {\em all} such realizations. In
addition to some minor variants on those above, like
$$S^{2n-1} \simeq SU(n)/SU(n-1),\qquad S^{4n-1} \simeq
Sp(n)/Sp(n-1),$$
the only remaining possibilities are
$$\begin{aligned}
   S^6 \simeq G_{2,c}/SU(3) \qquad &\text{(Section \ref{sec:G2}), and}\\
  S^7 \simeq \Spin(7)/G_{2,c} \qquad &\text{(Section \ref{sec:bigG2}).}\\
\end{aligned}$$

After recalling in Sections \ref{sec:R}--\ref{sec:bigG2} the classical
harmonic analysis related to these various realizations of spheres, we
will examine in Sections \ref{sec:invt}--\ref{sec:size} what these
classical results say about invariant differential operators.

In Sections \ref{sec:Opq}--\ref{sec:bigncG2} we examine what
this information about harmonic analysis on spheres can tell us about
harmonic analysis on hyperboloids. With $n=p+q$ the symmetric spaces
$$H_{p,q}=O(p,q)/O(p-1,q),\quad (0\le q\le n)$$
are said to be {\it real forms} of each other (and thus in
particular of
$$S^{n-1}=O(n)/O(n-1) = H_{n,0}).$$
Similarly, each of the realizations listed above of $S^{n-1}$ as a
non-symmetric homogeneous space for a subgroup of $O(n)$ corresponds
to one or more noncompact real forms, realizing some of the $H_{p,q}$
as non-symmetric homogeneous spaces for subgroups of $O(p,q)$. These
realizations exhibit the hyperbolic spaces as examples of real
spherical spaces of rank one, and as such our interest is primarily
with their discrete series.  These and related spaces have previously been
studied by T.~Kobayashi (see \cite{Kobayashi-Stiefel,Kob,toshi:zuckerman,toshi:howe}).  In Sections
\ref{sec:Upq}--\ref{sec:bigncG2} we give an essentially self-contained treatment,
in some cases giving slightly more refined information. In particular, we obtain some
interesting discrete series representations for small parameter values
for the real forms of 
$S^6 \simeq G_{2,c}/SU(3)$.

\begin{subequations}\label{se:orbitmethod}
For information about real spherical spaces and their discrete series
in general we refer to \cite{KKOS}; this paper was intended in part to
examine some interesting examples of those results. In particular, we
are interested in formulating the parametrization of discrete series
in a way that may generalize as much as possible. We are very grateful
to Job Kuit for extensive discussions of this parametrization problem.

One such formulation involves the ``method of coadjoint orbits:''
representations of $G$ are parametrized by certain orbits $G\cdot
\lambda$ of $G$ on the real dual vector space
\begin{equation}\label{eq:coadjoint}
  {\mathfrak g}_0^* =_{\text{def}} \Hom_{\mathbb R}(\Lie(G),{\mathbb R})
\end{equation}
(often together with additional data). The orbits corresponding to
representations appearing in $G/H$
typically have representatives
\begin{equation}\label{eq:GmodH}
  \lambda \in [{\mathfrak g}_0/{\mathfrak h}_0]^*.
\end{equation}
We mention this at the beginning of the paper because this coadjoint
orbit parametrization is often {\em not} a familiar one (like that of
representations of compact groups by highest weights). We will write
something like
\begin{equation}\label{eq:orbitparam}
  \pi(\text{orbit\ }\lambda,\Lambda)
\end{equation}
for the representation of $G$ parametrized by $G\cdot\lambda$ (and
sometimes additional data $\Lambda$). If $G$ is an equal-rank reductive group
and $\lambda \in {\mathfrak g}_0^*$ is a regular elliptic element
(never mind exactly what these terms mean), then
\begin{equation}\label{eq:dsparam}
  \pi(\text{orbit\ }\lambda) = \text{discrete series with
    Harish-Chandra parameter $i\lambda$;}
\end{equation}
so this looks like a moderately familiar parametrization. (Here
``discrete series representation'' has the classical meaning of an
irreducible summand of $L^2(G)$. Soon we will use the term more
generally to refer to summands of $L^2(G/H)$.) But notice
that \eqref{eq:dsparam} includes the case of $G$ compact. In that case
$\lambda$ is not the highest weight, but rather an exponent in the
Weyl character formula.

Here is how most of our discrete series will arise. Still for $G$
reductive, if $\lambda$ is elliptic but possibly singular, define
\begin{equation}\label{eq:cohindA}
  G^\lambda = L, \qquad {\mathfrak q} = {\mathfrak l} + {\mathfrak u}
\end{equation}
to be the $\theta$-stable parabolic subalgebra defined by the
requirement that
\begin{equation}\label{eq:cohindB}
  i\lambda(\alpha^\vee) > 0, \qquad (\alpha \in \Delta({\mathfrak
    u},{\mathfrak h})).
\end{equation}
The ``additional data'' that we sometimes need is a one-dimensional character
\begin{equation}\label{eq:cohindC}
  \Lambda \colon L \rightarrow {\mathbb C}^\times, \quad d\Lambda =
  i\lambda + \rho({\mathfrak u}).
\end{equation}
(If $G^\lambda$ is connected, which is automatic if $G$ is connected
and $\lambda$ is elliptic, then $\Lambda$ is uniquely determined by
$\lambda$; the {\em existence} of $\Lambda$ is an {\em integrality}
constraint on $\lambda$.) Attached to $(\lambda,\Lambda)$ is a
cohomologically induced unitary 
representation $\pi(\text{orbit\ }\lambda,\Lambda)$ satisfying
  \begin{equation}\label{eq:cohindD}\begin{aligned}
      \text{infinitesimal character} &= i\lambda - \rho_L = d\Lambda
      -\rho.\\[1ex]
      \text{lowest $K$-type} &= \Lambda - 2\rho({\mathfrak u}\cap
           {\mathfrak k}) \\
           &= i\lambda - \rho({\mathfrak u}\cap
           {\mathfrak p}) + \rho({\mathfrak u}\cap {\mathfrak k}).
    \end{aligned}
  \end{equation}
  If $\lambda$ is small, the formula for the lowest $K$-type can fail:
  one thing that is true is that this representation of $K$ appears if
  the weight is dominant for $K$.

In \cite{VZ}, the representation $\pi(\text{orbit\ }\lambda,\Lambda)$
was called $A_{\mathfrak q}(\Lambda-2\rho({\mathfrak u}))$.

If $G = K$ is compact, then
  \begin{equation}\label{eq:cohindcpt}
   \pi(\text{orbit\ }\lambda) = \text{repn of highest
     weight\ } i\lambda + \rho({\mathfrak 
     u}).
  \end{equation}
If this weight fails to be dominant, then (still in the
compact case)  $\pi(\text{orbit\ }\lambda,\Lambda) = 0$. A confusing
but important aspect of this construction is that the same
representation of $G$ may be attached to several different coadjoint
orbits. Still for $G=K$ compact, the trivial representation is attached
to the orbit of $i\rho({\mathfrak u})$ for each of the
($2^{\text{semisimple rank($K$)}}$) different $K$ conjugacy classes of parabolic
subalgebras ${\mathfrak q}$. If we are looking at the trivial
representation inside functions on a homogeneous space $G/H$, then the
requirement \eqref{eq:GmodH} will ``prefer'' only some of these orbits:
different orbits for different $H$.
\end{subequations} 

\begin{subequations}\label{se:invts}
{\bf Notational convention.} If $(\pi,V_\pi)$ is a representation of a
group $G$, and $H\subset G$ is a subgroup, we write
\begin{equation}\label{eq:invts}
  (\pi^H,V_\pi^H),
\end{equation}
or often just $\pi^H$ for the subspace of $H$-fixed vectors in
$V_\pi$. If $T\in \End(V_\pi)$ preserves $V_\pi^H$, then we will write
\begin{equation}\label{eq:invtops}
  \pi^H(T) =_{\text{def}} T|_{V_\pi^H}
\end{equation}
for the restriction of $T$ to the invariant vectors. This notation
may be confusing because we often write a family of representations
of $G$ as something like
\begin{equation}
  \{\pi^G_s \mid s\in S\};
\end{equation}
then in the notation $[\pi^G_s]^H$, the superscripts $G$ and $H$ have
entirely different meanings. We hope that no essential ambiguity
arises in this way.
\end{subequations} 
\section{The classical calculation}
\label{sec:R}
\setcounter{equation}{0}

\begin{subequations}\label{se:Rsphere}
Suppose $n\ge 1$ is an integer. Write $O(n)$ for the orthogonal group
of the standard inner product on ${\mathbb R}^n$, and
\begin{equation}
  S^{n-1} = \{v\in {\mathbb R}^n \mid \langle v,v\rangle = 1\}
\end{equation}
for the $(n-1)$-dimensional sphere. We choose as a base point
\begin{equation}
  e_1 = (1,0,\ldots,0) \in S^{n-1},
\end{equation}
which makes sense by our assumption that $n\ge 1$.
Then $O(n)$ acts transitively on $S^{n-1}$, and the isotropy group at
$e_1$ is
\begin{equation}
  O(n)^{e_1} \simeq O(n-1);
\end{equation}
we embed $O(n-1)$ in $O(n)$ by acting on the last $n-1$
coordinates. This shows
\begin{equation}
  S^{n-1} \simeq O(n)/O(n-1).
\end{equation}
\end{subequations} 

Now Frobenius reciprocity guarantees that if $H\subset G$ are compact
groups, then
\begin{equation}
  L^2(G/H) \simeq \sum_{(\pi,V_\pi)\in \widehat G} V_\pi \otimes
  (V_\pi^*)^H.
\end{equation}
In words, the multiplicity of an irreducible representation $\pi$ of
$G$ in $L^2(G/H)$ is equal to the dimension of the space of $H$-fixed
vectors in $\pi^*$. So understanding functions on $G/H$ amounts to
understanding representations of $G$ admitting an $H$-fixed
vector. All of the compact homogeneous spaces $G/H$ that we will
consider are {\em Gelfand pairs}, meaning that $\dim (V_{\pi^*})^H \le
1$ for every $\pi \in \widehat G$.

\begin{subequations}\label{se:Rreps}
Here's how that looks for our example. We omit the cases $n=1$ and $n=2$,
which are degenerate versions of the same thing; so assume $n\ge 3$. A
maximal torus in
$O(n)$ is
\begin{equation}
  T = SO(2)^{[n/2]},
\end{equation}
so a weight is an $[n/2]$-tuple of integers. For every integer $a
\ge 0$ there is an irreducible representation $\pi^{O(n)}_a$ of highest
weight
\begin{equation}\label{eq:Rdim}
  (a,0,\ldots,0), \qquad \dim \pi^{O(n)}_a = \frac{(a+n/2
    -1)\prod_{j=1}^{n-3} (a+j)}{(n/2 -1)\cdot (n-3)!}.
\end{equation}
Notice that the polynomial function of $a$ giving the dimension has
degree $n-2$. One natural description of $\pi^{O(n)}_a$ is
\begin{equation}\label{eq:harmonic}
  \pi^{O(n)}_a = S^a({\mathbb C}^n)/r^2 S^{a-2}({\mathbb C}^n);
\end{equation}
what we divide by is zero if $a< 2$. We will be interested in the
{\em infinitesimal characters} of the representations $\pi^{O(n)}_a$; that
is, the scalars by which elements of
\begin{equation}
  {\mathfrak Z}({\mathfrak o}(n)) =_{\text{def}} U({\mathfrak
    o}(n)_{\mathbb C})^{O(n)}
\end{equation}
act on $\pi^{O(n)}_a$. According to Harish-Chandra's theorem,
infinitesimal characters may be identified with Weyl group orbits of
complexified weights.  The infinitesimal character of a
finite-dimensional representation of highest weight $\lambda$ is given
by $\lambda+\rho$, with $\rho$ half the sum of the positive
roots. Using the calculation of $\rho$ given in \eqref{eq:Orho}, we get
\begin{equation}\label{eq:Oinfchar}
  \text{infinitesimal character}(\pi^{O(n)}_a) = (a+(n-2)/2, (n-4)/2,
  (n-6)/2,\cdots).
\end{equation}

The key fact (in the notation explained in \eqref{se:invts}) is that
\begin{equation}\label{eq:Rkey}
  \dim [\pi^{O(n)}_a]^{O(n-1)} = 1 \quad (a\ge 0), \qquad \dim\pi^{O(n-1)} = 0
  \quad (\pi \not\simeq \pi^{O(n)}_a).
\end{equation}
Therefore
\begin{equation}\label{eq:Osphere}
  L^2(S^{n-1}) \simeq \sum_{a=0}^\infty \pi^{O(n)}_a
\end{equation}
as representations of $O(n)$.

If $n=1$, the definition \eqref{eq:harmonic} of $\pi_a^{O(1)}$ is
still reasonable. Then $\pi_a^{O(1)}$ is one-dimensional if $a=0$ or
$1$, and zero for $a\ge 2$. The formula \eqref{eq:Osphere} is still
valid.

If $n=2$, the definition \eqref{eq:harmonic} of $\pi_a^{O(2)}$ is
still reasonable, and \eqref{eq:Osphere} is still valid. Then
$\pi_a^{O(2)}$ is one-dimensional if $a=0$, and two-dimensional for
$a\ge 1$.
\end{subequations}

\begin{subequations}\label{se:Rorbit}
Here is the orbit method perspective. The Lie algebra ${\mathfrak
  g}_0$ consists of $n\times n$ skew-symmetric matrices; ${\mathfrak
  h}_0$ is the subalgebra in which the first row and column are
zero. We can identify ${\mathfrak g}_0^*$ with ${\mathfrak g}_0$ using
the invariant bilinear form
$$B(X,Y) = \tr(XY).$$
Doing that, define
\begin{equation}
  a_{\text{orbit}} = a+(n-2)/2
\end{equation}

{\small
\begin{equation}
\lambda(a_{\text{orbit}}) =\begin{pmatrix} 0& a_{\text{orbit}}/2 &
\quad 0&\dots & 0\\ -a_{\text{orbit}}/2 & 0& \quad 0 &\dots &0\\[1ex]
0&0 &\\
\vdots&\vdots & & \text{\Large $0_{(n-2)\times (n-2)}$}\\[-.5ex]
 \\ 0&  0 &&&\end{pmatrix} \in ({\mathfrak g}_0/{\mathfrak h}_0)^*.
\end{equation}}

\smallskip
The isotropy group for $\lambda(a_{\text{orbit}})$ is
\begin{equation}
  O(n)^{\lambda(a_{\text{orbit}})} = SO(2)\times O(n-2)
  =_{\text{def}} L.
\end{equation}

  \smallskip
With this notation,
\begin{equation}
  \pi_a^{O(n)} = \pi(\text{orbit\ } \lambda(a_{\text{orbit}})).
\end{equation}
The reason this is true is that the infinitesimal character of the
orbit method representation on the right is (by \eqref{eq:cohindD})
\begin{equation}\begin{aligned}
  \lambda(a_{\text{orbit}}) -\rho_L &=
  (a+(n-2)/2,-(n-4)/2,-(n-6)/2,\cdots)\\
  &= \text{infinitesimal character of\ } \pi_a^{O(n)}.\end{aligned}
\end{equation}

An aspect of the orbit method perspective is that the ``natural''
dominance condition is no longer $a\ge 0$ but rather
\begin{equation}
  a_{\text{orbit}} > 0 \iff a > -(n-2)/2.
\end{equation}
For the compact group $O(n)$ we have
\begin{equation}
\pi(\text{orbit\ } \lambda(a_{\text{orbit}}))=0, \qquad 0 > a >
  -(n-2)/2,
\end{equation}
(for example because the infinitesimal characters of these
representations are singular) so the difference is not important. But
matters will be more interesting in the noncompact case (Section
\ref{sec:Opq}).
\end{subequations} 

Back in the general world of a homogeneous space $G/H$ for compact
groups, fix a (positive) $G$-invariant metric on ${\mathfrak g}_0 =
\Lie(G)$, and write
\begin{equation} \Omega_G = -\text{(sum of squares of an orthonormal
    basis)}.
 \end{equation}
for the corresponding Casimir operator. (We use a minus sign because
natural choices for the metric are negative definite rather than
positive definite.) The $G$-invariant metric on ${\mathfrak g}_0$
defines an $H$-invariant metric on
${\mathfrak g}_0/{\mathfrak h}_0 \simeq T_e(G/H)$, and therefore a
$G$-invariant Riemannian structure on $G/H$. Write
\begin{equation}
  L = \text{negative of Laplace-Beltrami operator on $G/H$,}
\end{equation}
a $G$-invariant differential operator. According to
\cite{GGA}*{Exercise II.A4}, the action of $\Omega_G$ on functions on
$G/H$ is equal to the action of $L$. (The Exercise is stated for symmetric
spaces, but the proof on page 568 works in the present setting.) Consequently

\medskip
\centerline{on an irreducible $G$-representation $\pi \subset
    C^\infty(G/H)$,}
\centerline{$L$ acts by the scalar $\pi(\Omega_G)$.}

\medskip
So we need to be able to calculate these scalars. If $T$ is a maximal
torus in $G$, and $\pi$ has highest weight $\lambda\in {\mathfrak
  t}^*$, then
\begin{equation}
  \pi(\Omega_G) = \langle \lambda + 2\rho,\lambda \rangle = \langle
  \lambda+\rho, \lambda+ \rho\rangle - \langle \rho,\rho\rangle.
\end{equation}
Here $2\rho\in {\mathfrak t}^*$ is the sum of the positive roots. (The
second formula relates this scalar to the infinitesimal character
written in \eqref{eq:Oinfchar} above.)

\begin{subequations}\label{se:Rspec}
  Now we're ready to calculate the spectrum of the spherical Laplace
  operator $L$. We need to calculate
  $\pi^{O(n)}_a(\Omega_{O(n)})$. The sum of the positive roots is
  \begin{equation}\label{eq:Orho}
    2\rho(O(n)) = (n-2, n-4,\cdots,n-2[n/2]).
  \end{equation}
  (Recall that we have identified weights of $T=SO(2)^{[n/2]}$ with
  $[n/2]$-tuples of integers.) Because our highest weight is
  \begin{equation}
    \lambda = (a,0,\ldots,0),
  \end{equation}
  we find
  \begin{equation}\label{eq:LR}
    \pi^{O(n)}_a(\Omega_{O(n)}) = a^2 + (n-2)a = a_{\text{orbit}}^2 - (n-2)^2/4.
  \end{equation}
\end{subequations} 
\begin{theorem} \label{thm:Rspec} Suppose $n\ge 3$. The eigenvalues of
  the (negative) 
  Laplace-Beltrami operator $L$ on
$S^{n-1}$ are $a^2 + (n-2)a$, for all non-negative integers $a$. The
multiplicity of this eigenvalue is
$$ \frac{(a+n/2 -1)\prod_{j=1}^{n-3} (a+j)}{(n/2 -1)\cdot (n-3)!},$$
a polynomial in $a$ of degree $n-2$.
\end{theorem}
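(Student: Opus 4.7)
\begin{proof_sketch}
The plan is to assemble the pieces already developed in the excerpt rather than to compute anything from scratch. The theorem is a direct consequence of three facts: the spectral decomposition \eqref{eq:Osphere} of $L^2(S^{n-1})$, the identification of the Laplace--Beltrami operator with (minus) the Casimir on functions on $G/H$, and the highest-weight formula for the Casimir scalar.

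First I would recall \eqref{eq:Osphere}, which gives an orthogonal decomposition
\[
L^2(S^{n-1}) \simeq \sum_{a=0}^{\infty} \pi_a^{O(n)}
\]
as representations of $O(n)$. Since $L$ is $O(n)$-invariant, Schur's lemma implies that $L$ acts by a scalar $c_a$ on each irreducible summand $\pi_a^{O(n)}$. The quoted result from \cite{GGA}*{Exercise II.A4}, applied to $G=O(n)$ and $H=O(n-1)$ with the bi-invariant metric inducing the standard round metric on $S^{n-1}$, identifies the action of $L$ on $C^\infty(S^{n-1})$ with the action of $\Omega_{O(n)}$, so $c_a = \pi_a^{O(n)}(\Omega_{O(n)})$.

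Next I would compute this scalar using the formula $\pi(\Omega_G) = \langle \lambda + 2\rho,\lambda\rangle$ with the highest weight $\lambda = (a,0,\ldots,0)$ and the value of $2\rho$ given in \eqref{eq:Orho}. Only the first coordinate of $\lambda$ is nonzero, so the inner product collapses to $a\cdot a + (n-2)\cdot a = a^2 + (n-2)a$, which is exactly what \eqref{eq:LR} already records. For the multiplicity, I would simply cite the dimension formula \eqref{eq:Rdim} for $\pi_a^{O(n)}$, noting that distinct values of $a$ give distinct eigenvalues (since $a \mapsto a^2+(n-2)a$ is strictly increasing for $a\ge 0$ when $n\ge 3$), so each eigenspace is a single irreducible summand and the multiplicity is $\dim\pi_a^{O(n)}$.

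There is no real obstacle here; the only thing requiring any care is making sure the hypotheses of the cited exercise in \cite{GGA} apply in this non-symmetric framing (the excerpt already notes that the proof on page~568 works in the present generality), and making sure the $n\ge 3$ hypothesis is compatible with the appeal to the weight combinatorics of $O(n)$ — which it is, since this is exactly the range in which \eqref{eq:Orho} and \eqref{eq:Rdim} are stated without degeneration.
\end{proof_sketch}
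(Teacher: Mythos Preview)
Your proposal is correct and follows essentially the same route as the paper: the theorem is stated immediately after the computations \eqref{eq:Osphere}, \eqref{eq:Orho}, \eqref{eq:Rdim}, and \eqref{eq:LR}, which together constitute its proof. Your added remark that $a\mapsto a^2+(n-2)a$ is strictly increasing (so distinct $a$ give distinct eigenvalues) makes the multiplicity claim precise and is a welcome clarification the paper leaves implicit.
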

In Sections \ref{sec:C}--\ref{sec:O} we'll repeat this calculation using other
groups.

\section{The complex calculation}
\label{sec:C}
\setcounter{equation}{0}

\begin{subequations}\label{se:Usphere}
Suppose $n\ge 1$ is an integer. Write $U(n)$ for the unitary group
of the standard Hermitian inner product on ${\mathbb C}^n$, and
\begin{equation}
  S^{2n-1} = \{v\in {\mathbb C}^n \mid \langle v,v\rangle = 1\}
\end{equation}
for the $(2n-1)$-dimensional sphere. We choose as a base point
\begin{equation}
  e_1 = (1,0,\ldots,0) \in S^{2n-1},
\end{equation}
which makes sense by our assumption that $n\ge 1$.
Then $U(n)$ acts transitively on $S^{2n-1}$, and the isotropy group at
$e_1$ is
\begin{equation}
  U(n)^{e_1} \simeq U(n-1);
\end{equation}
we embed $U(n-1)$ in $U(n)$ by acting on the last $n-1$
coordinates. This shows
\begin{equation}
  S^{2n-1} \simeq U(n)/U(n-1).
\end{equation}
\end{subequations} 

\begin{subequations}\label{se:Ureps}
Here is the representation theory. We omit the case $n=1$,
which is a degenerate version of the same thing; so assume $n\ge 2$. A
maximal torus in
$U(n)$ is
\begin{equation}
  T = U(1)^n
\end{equation}
so a weight is an $n$-tuple of integers. For all integers $b\ge 0$ and
$c\ge 0$ there is an irreducible representation $\pi^{U(n)}_{b,c}(n)$ of highest
weight
\begin{equation}
  (b,0,\ldots,0,-c), \qquad \dim \pi^{U(n)}_{b,c}= \frac{(b+c+n-1)\prod_{j=1}^{n-2}
    (b+j) 
    (c+j)}{(n -1)\cdot [(n-2)!]^2}. 
\end{equation}
Notice that the polynomial giving the dimension has degree $2n-3$ in
the variables $b$ and $c$. A
natural description of the representation is
\begin{equation}
  \pi^{U(n)}_{b,c} \simeq S^b({\mathbb C}^n) \otimes S^c(\overline{\mathbb
    C}^n)/r^2 S^{b-1}({\mathbb C}^n) \otimes S^{c-1}(\overline{\mathbb
    C}^n);
\end{equation}
what we divide by is zero if $b$ or $c$ is zero. The space is (a
quotient of) polynomial functions on ${\mathbb C}^n$, homogeneous of
degree $b$ in the holomorphic coordinates and homogeneous of degree
$c$ in the antiholomorphic coordinates.

Using the calculation of $\rho$ given in \eqref{eq:Urho} below, we
find
\begin{small}\begin{equation}\label{eq:Uinflchar}
\hbox{infl.~char.}\left(\pi^{U(n)}_{b,c}\right) = (b+(n-1)/2,(n-3)/2,\cdots,
-(n-3)/2,-(c+(n-1)/2).
\end{equation}\end{small}

The key fact (again in the notation of \eqref{se:invts}) is that
\begin{equation}\label{e:Ckey}
  \dim [\pi^{U(n)}_{b,c}]^{U(n-1)} = 1 \quad (b\ge 0, \ c\ge 0),
  \qquad \dim\pi^{U(n-1)} = 0 \quad (\pi \not\simeq \pi^{U(n)}_{b,c}).
\end{equation}
Therefore
\begin{equation}
  L^2(S^{2n-1}) \simeq \sum_{b\ge 0, \ c\ge 0} \pi^{U(n)}_{b,c}
\end{equation}
as representations of $U(n)$.

We add one more piece of representation-theoretic information,
without explaining yet why it is useful. If we write $U(1)$ for the
multiplication by unit scalars in the first coordinate, then $U(1)$ commutes
with $U(n-1)$. In any representation of $U(n)$, $U(1)$ therefore
preserves the $U(n-1)$-fixed vectors. The last fact is
\begin{equation}\label{e:extraCreps}
  \text{$U(1)$ acts on $[\pi^{U(n)}_{b,c}]^{U(n-1)}$ by the weight $b-c$.}
\end{equation}
\end{subequations}

\begin{subequations}\label{se:Corbit}
Here is the orbit method perspective. The Lie algebra ${\mathfrak
  g}_0$ consists of $n\times n$ skew-hermitian matrices; ${\mathfrak
  h}_0$ is the subalgebra in which the last row and column are zero.
We can identify ${\mathfrak
  g}_0^*$ with ${\mathfrak g}_0$ using the invariant bilinear form
$$B(X,Y) = \tr(XY).$$
Doing that, define
\begin{equation}
  b_{\text{orbit}} = b+(n-1)/2, \qquad c_{\text{orbit}} = c+ (n-1)/2.
\end{equation}
We need also an auxiliary parameter
\begin{equation}
  r_{\text{orbit}} = (b_{\text{orbit}}c_{\text{orbit}})^{1/2}.
\end{equation}

Now define a linear functional

\smallskip
{\small
\begin{equation}
\lambda(b_{\text{orbit}},c_{\text{orbit}}) =\begin{pmatrix}  i(b_{\text{orbit}} -
c_{\text{orbit}})& r_{\text{orbit}} & \quad 0&\dots & 0\\ -r_{\text{orbit}}
& 0 & \quad 0 &\dots &0\\[1ex] 0&0 &\\ \vdots&\vdots & & \text{\Large
  $0_{(n-2)\times (n-2)}$}\\[-.5ex]  \\ 0&  0 &&&\end{pmatrix} \in
({\mathfrak g}_0/{\mathfrak h}_0)^*.
\end{equation}}

\smallskip
This skew-hermitian matrix has been constructed to be orthogonal to
${\mathfrak h}_0$, and to have eigenvalues $ib_{\text{orbit}}$,
$-ic_{\text{orbit}}$, and $n-2$ zeros. Its isotropy group is (as long
as $r_{\text{orbit}} \ne 0$)
\begin{equation}
U(n)^{\lambda(b_{\text{orbit}},c_{\text{orbit}})} = U(1)
\times U(n-2) \times U(1) =_{\text{def}} L;
\end{equation}
the first and last $U(1)$ factors are not the usual ``coordinate''
$U(1)$ factors, but rather correspond to the $ib_{\text{orbit}}$ and
$-ic_{\text{orbit}}$ eigenspaces respectively.
\smallskip
With this notation,
\begin{equation}
  \pi_{b,c}^{U(n)} = \pi(\text{orbit\ } \lambda(b_{\text{orbit}},c_{\text{orbit}})).
\end{equation}
An aspect of the orbit method perspective is that the ``natural''
dominance condition is no longer $b,c\ge 0$ but rather
\begin{equation}
  b_{\text{orbit}} > 0 \iff b > -(n-1)/2, \qquad c_{\text{orbit}} > 0
  \iff c > -(n-1)/2.
\end{equation}
For the compact group $U(n)$ we have
\begin{equation}\begin{aligned}
\pi(\text{orbit\ } \lambda(b_{\text{orbit}},c_{\text{orbit}}))=0
\quad &\text{if}\quad 0 > b > -(n-1)/2 \\
&\text{or}\quad 0 > c > -(n-1)/2,\end{aligned}
\end{equation}
so the difference is not important. But matters will be more
interesting in the noncompact case (Section \ref{sec:Upq}).
\end{subequations} 

\begin{subequations}\label{se:Cspec}
  Now we're ready for spectral theory.   We need to calculate
  $\pi^{U(n)}_{b,c}(\Omega_{U(n)})$. The sum of the positive roots is
  \begin{equation}\label{eq:Urho}
    2\rho(U(n)) = (n-1, n-3,\cdots,-(n-1)).
  \end{equation}
  (Recall that we have identified weights of $T=U(1)^n$ with
  $n$-tuples of integers.) Because our highest weight is
  \begin{equation}
    \lambda = (b,0,\ldots,-c),
  \end{equation}
  we find
  \begin{equation}\label{eq:LC}\begin{aligned}
      \pi^{U(n)}_{b,c}(\Omega_{U(n)}) &= b^2 + c^2 + (n-1)(b+c)\\
      &= b_{\text{orbit}}^2 + c_{\text{orbit}}^2 - (n-1)^2/2.
      \end{aligned}
  \end{equation}

Just as for the representation theory above, we'll add one more piece
of information without explaining why it will be useful:
\begin{equation}
  [\pi^{U(n)}_{b,c}]^{U(n-1)}(\Omega_{U(1)}) = (b-c)^2 = b^2 + c^2 - 2bc.
\end{equation}
Combining the last two equations gives
\begin{equation}\label{e:extraCspec}
  [\pi^{U(n)}_{b,c}]^{U(n-1)}(2\Omega_{U(n)} - \Omega_{U(1)}) = (b+c)^2 +
  (2n-2)(b+c).
\end{equation}

\end{subequations} 
\begin{theorem} Suppose $n\ge 2$. The eigenvalues of the (negative)
  Laplace-Beltrami operator $L_U$ on 
$S^{2n-1}$ are $b^2 + c^2 + (n-1)(b+c)$, for all non-negative integers
  $b$ and $c$. The
  multiplicity of this eigenvalue is
$$ \frac{(b+c+n-1)\prod_{j=1}^{n-2}
    (b+j) \prod_{k=1}^{n-2} (c+k)}{(n -1)\cdot (n-2)! \cdot (n-2)!}$$
  a polynomial in $b$ and $c$ of total degree $2n-3$.

A little more precisely, the multiplicity of an eigenvalue $\lambda$
is the sum over all expressions
$$\lambda = b^2 + c^2 + (n-1)(b+c)$$
(with $b$ and $c$ nonnegative integers) of the indicated polynomial in
$b$ and $c$.
\end{theorem}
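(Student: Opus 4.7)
The plan is to assemble pieces that are already in place in Section~\ref{sec:C}. First I would invoke the decomposition
$$L^2(S^{2n-1}) \simeq \bigoplus_{b,c\ge 0} \pi^{U(n)}_{b,c}$$
recorded just after \eqref{e:Ckey}. This is a multiplicity-free sum of pairwise inequivalent $U(n)$-irreducibles, so by Schur's lemma any $U(n)$-invariant operator, in particular $L_U$, must act by a scalar on each summand.

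Second, I would identify that scalar as $\pi^{U(n)}_{b,c}(\Omega_{U(n)})$. The general identification of the negative Laplace--Beltrami operator with the Casimir, recalled near the end of Section~\ref{sec:R}, applies verbatim here: the only input is that the Riemannian metric on $U(n)/U(n-1)$ comes from an $\mathrm{Ad}$-invariant form on $\mathfrak{u}(n)$, which is how we built it. The scalar itself was computed in \eqref{eq:LC} to equal $b^2 + c^2 + (n-1)(b+c)$, giving exactly the list of eigenvalues claimed. The multiplicity contributed by $\pi^{U(n)}_{b,c}$ is then its dimension, which is the displayed polynomial; inspection of the factored form shows its total degree in $(b,c)$ is $1 + (n-2) + (n-2) = 2n-3$.

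The only genuine subtlety, flagged by the theorem's \emph{more precisely} clause, is that the map $(b,c) \mapsto b^2 + c^2 + (n-1)(b+c)$ from $\mathbb{Z}_{\ge 0}^2$ into $\mathbb{Z}_{\ge 0}$ is not injective. Already $(b,c)$ and $(c,b)$ produce the same eigenvalue (which is consistent with the symmetry $\pi^{U(n)}_{b,c}\leftrightarrow\pi^{U(n)}_{c,b}$ under complex conjugation of representations), and further integer coincidences will typically occur. To obtain the total multiplicity of a fixed eigenvalue one must therefore sum the dimension formula over all nonnegative integer pairs $(b,c)$ with $b^2 + c^2 + (n-1)(b+c) = \lambda$, which is precisely what the second assertion of the theorem records.

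I do not expect any real obstacle: the theorem is a bookkeeping consequence of the Peter--Weyl decomposition \eqref{e:Ckey} and the Casimir computation \eqref{eq:LC}. If anything qualifies as the substantive step, it is the casimir-equals-Laplacian identity borrowed from \cite{GGA}; this is stated there for symmetric spaces, but as noted in Section~\ref{sec:R} the argument only uses that the metric is induced from an $\mathrm{Ad}$-invariant bilinear form, so it transfers to the present non-symmetric Riemannian homogeneous setting without change.
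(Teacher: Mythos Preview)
Your proposal is correct and follows essentially the same approach as the paper: the theorem is stated immediately after the computations in \eqref{se:Ureps} and \eqref{se:Cspec} as their direct consequence, with no separate proof given. You have correctly identified and assembled the relevant inputs---the multiplicity-free decomposition from \eqref{e:Ckey}, the Casimir-equals-Laplacian identity borrowed from Section~\ref{sec:R}, and the eigenvalue computation \eqref{eq:LC}---and your remark about the non-injectivity of $(b,c)\mapsto b^2+c^2+(n-1)(b+c)$ is exactly the content of the theorem's ``more precisely'' clause.
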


Let us compute the first few eigenvalues when $n=2$, so that we are
looking at $S^3$. Some numbers are in Table \ref{table:CLaplacian}. We
have also included eigenvalues and multiplicities from the
calculation with $O(4)$ acting on $S^3$, and the peculiar added
calculations from \eqref{e:extraCreps} and \eqref{e:extraCspec}.

\begin{table}\label{table:CLaplacian}
  \caption{\bf Casimir eigenvalues and multiplicities on
    $S^3$}

 \smallskip
  \begin{tabular}{|c|c|c|c|c || c | c | c|}
    \hline
    $b$ & $c$ & \Tstrut\Bstrut\small{$\pi^{U(n)}_{b,c}(\Omega_{U(2)})$}
    &{\scriptsize $[\pi^{U(n)}_{b,c}]^{U(1)}(2\Omega_{U(2)} - \Omega_{U(1)})$}
    & dim &  $a$
    & \small{$\pi^{O(4)}_a(\Omega_{O(4)})$} & dim \\[1ex]
    \hline
    0 & 0 & 0 & 0 & 1 & 0 & 0 & 1\\
\hline
    0 & 1 & 2 & 3 & 2 & 1 & 3 & 4\\
    1 & 0 & 2 & 3 & 2 &&&\\
\hline
    0 & 2 & 6 & 8 & 3 & 2 & 8 & 9\\
    1 & 1 & 4 & 8 & 3&&&\\
    2 & 0 & 6 & 8 & 3&&&\\
    \hline
    0 & 3 & 12 & 15 & 4 & 3 & 15 & 16\\
    1 & 2 &  8 & 15 & 4 &&&\\
    2 & 1 &  8 & 15 & 4 &&&\\
    3 & 0 & 12 & 15 & 4 &&&\\
    \hline
  \end{tabular}
\end{table}
Since each half (left and the right) of the table concerns $S^3$,
there should be some relationship between them. There are indeed
relationships, but they are not nearly as close as one might
expect. What is being calculated in each case is the spectrum of a
Laplace-Beltrami operator. It is rather clear that the spectra are
quite different: the multiplicities calculated with $U(2)$ are
smaller than the multiplicities calculated with $O(4)$, and the actual
eigenvalues are smaller for $U(2)$ as well.

The reason for this is that metric $g_{O}$ that we
used in the $O(2n)$ calculation is not the same as the metric $g_{U}$
that we used in the $U(n)$ calculation.  There are two aspects to the
difference. Recall that
\begin{equation}\label{eq:Rtan}
  T_{e_1}(S^{n-1}) = \{(0,v_2,\cdots,v_{n}) \mid v_j \in {\mathbb
    R}\} \simeq {\mathbb R}^{n-1}.
\end{equation}
In this picture, we will see that $g_{O}$ is the usual inner product
on ${\mathbb R}^{n-1}$. In the $U(n)$ picture,
\begin{equation}\label{eq:T2n-1}
  T_{e_1}(S^{2n-1}) = \{(it_1,z_2,\cdots,z_{n}) \mid t\in {\mathbb
    R},\ z_j \in {\mathbb C}\} \simeq {\mathbb R} + {\mathbb C}^{n-1}.
\end{equation}
In this picture, $g_U$ is actually {\em twice} the usual inner product
on ${\mathbb C}^{n-1}$:
\begin{equation}\label{eq:gOU}
  |(0,x_2+iy_2,\cdots,x_n+iy_n)|^2_{g_U} =
  2|(0,0,x_2,y_2,\cdots,x_n,y_n)|^2_{g_O}.
\end{equation}
Here is how to see this factor of two. The
Riemannian structure $g_O$ for $O(n)$ is
related to the invariant bilinear form on ${\mathfrak o}(n)$
\begin{equation}\label{eq:Oform}
  \langle X,Y\rangle_{O(n)} = (1/2)\tr(XY).
  \end{equation}
The reason for the
factor of $1/2$ is so that the form restricts to (minus) the ``standard''
inner product on the Cartan subalgebra ${\mathfrak s}{\mathfrak
  o}(2)^{[n/2]} \simeq {\mathbb R}^{[n/2]}$. Now suppose that
$$v\in {\mathbb R}^{n-1} \simeq T_{e_1}(S^{n-1}).$$
The tangent vector $v$ is given by the $n\times n$ skew-symmetric
matrix $A(v)$ with first row $(0,v)$, first column $(0,-v)^t$, and all
other entries zero. Then
\begin{equation}\label{eq:gO}
  |v|^2_{g_O} = -\langle A(v),A(v)\rangle_{O(n)} = -(1/2)(\tr(A(v)A(v)))
  = |v|^2,
\end{equation}
proving the statement after \eqref{eq:Rtan} about $g_0$.

For similar reasons, $g_U$ is related
to the invariant form on ${\mathfrak u}(n)$
\begin{equation}\label{eq:Uform1}
\langle Z,W\rangle_{U(n)} = \RE\tr(ZW) = (1/2)(\tr(ZW) +
\overline{\tr(ZW)}).
\end{equation}
If $z\in {\mathbb C}^{n-1} \subset T_{e_1}(S^{2n-1})$, then the
tangent vector $z$ is given by the $n\times n$ skew-Hermitian matrix $B(z)$
with first row $(0,z)$, first column $(0,-\overline{z})^t$, and all
other entries zero. Therefore
\begin{equation}\label{eq:gU}
  |z|^2_{g_U} = -\langle B(z),B(z)\rangle_{U(n)} = -\RE(\tr(B(z)B(z)) =
  2|z|^2.
  \end{equation}
Now equations \eqref{eq:gO} and \eqref{eq:gU} prove \eqref{eq:gOU}

Doubling the Riemannian metric has the effect of
dividing the Laplace operator by two, and so dividing the eigenvalues
by two. For this reason, the eigenvalues computed using $U(n)$ ought
to be half of those computed using $O(2n)$.

But that is still not what the table says. The reason is that in the
$U(n)$ picture, there is a ``preferred'' line in each tangent space,
corresponding to the fibration
$$S^1 \rightarrow S^{2n-1} \rightarrow {\mathbb C}{\mathbb P}^{n-1}.$$
In our coordinates in \eqref{eq:T2n-1}, it is the coordinate
$t_1$. The skew-Hermitian matrix $C(it_1)$ involved has $it_1$ in the first
diagonal entry, and all other entries zero.
\begin{equation}
  |(it_1,0,\cdots,0)|^2_{g_U} = -\langle C(it_1),C(it_1)\rangle_{U(n)}
 = t_1^2 =  |(0,t_1,0,0,\cdots,0,0)|^2_{g_O}:
\end{equation}
no factor of two. So the metric attached to the $U(n)$ action is
fundamentally different from the metric attached to the $O(2n)$
action. In the $U(n)$ case, there is a new (non-elliptic) Laplacian $L_{U(1)}$
acting in the direction of the $S^1$ fibration only. The remarks about
metrics above say that
\begin{equation}\label{eq:LOU}
  L_O = 2L_U - L_{U(1)}.
\end{equation}
(The reason is that the sum of squares of derivatives in $L_O$ is
almost exactly twice the sum of squares $L_U$; except that this factor
of two is not needed in the direction of the $U(1)$ fibration.)
The ``extra'' calculations \eqref{e:extraCreps} and
\eqref{e:extraCspec} are calculating the spectrum of
$L_{U(1)}$ representation-theoretically; so the column
$$[\pi^{U(n)}_{b,c}]^{U(1)}(2\Omega_{U(2)} - \Omega_{U(1)})$$
in the table above is calculating the spectrum of the classical Laplacian
$L_O$.

Here is a final representation-theoretic statement, explaining how the
$U(n)$ and $O(2n)$ calculations fit together.
\begin{theorem} \label{thm:OUcptbranch} Suppose $n\ge 2$, and $a$ is a
  non-negative integer. Using the inclusion $U(n)\subset O(2n)$, we have
  $$\pi^{O(2n)}_a|_{U(n)} = \sum_{\substack{0\le b,c \\[.1ex] b+c = a}} \pi^{U(n)}_{b,c}.$$
The contribution of these representations to the spectrum of the
$O(2n)$-invariant Laplacian $L_O$ is
$$\begin{aligned}\pi^{O(2n)}_a(\Omega_{O(2n)}) &= a^2 + (2n-2)a \\
  & = (b+c)^2 + 2(n-1)(b+c) \\
  &= [\pi^{U(n)}_{b,c}]^{U(n-1)}(2\Omega_{U(n)} - \Omega_{U(1)}).\end{aligned}$$
\end{theorem}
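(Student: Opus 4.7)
The plan is to combine the two polynomial models already recorded in the paper. By \eqref{eq:harmonic} with $n$ replaced by $2n$,
$$\pi^{O(2n)}_a \;=\; S^a(\mathbb{C}^{2n})\big/r^2\,S^{a-2}(\mathbb{C}^{2n}),$$
while the analogous formula in Section \ref{sec:C} presents $\pi^{U(n)}_{b,c}$ as
$$\pi^{U(n)}_{b,c} \;=\; [S^b(\mathbb{C}^n)\otimes S^c(\overline{\mathbb{C}^n})]\big/r^2\,[S^{b-1}(\mathbb{C}^n)\otimes S^{c-1}(\overline{\mathbb{C}^n})].$$
I would prove the branching law by decomposing the first model as a $U(n)$-module in a way compatible with multiplication by $r^2$, and identifying the resulting pieces with the second.

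First I would complexify $\mathbb{R}^{2n}$ and diagonalize $J\in O(2n)$, the orthogonal transformation given by multiplication by $\sqrt{-1}$ under the identification $\mathbb{R}^{2n}=\mathbb{C}^n$. Since $J^2=-I$ and $U(n)$ is its centralizer, this produces a $U(n)$-stable splitting $\mathbb{C}^{2n}=V_+\oplus V_-$ into the $\pm i$-eigenspaces, with $V_+\simeq \mathbb{C}^n$ and $V_-\simeq \overline{\mathbb{C}^n}$ as $U(n)$-modules. Because $J$ preserves the $\mathbb{C}$-bilinear extension of the standard real form, each $V_\pm$ is isotropic and the two are placed in perfect duality with each other; a brief basis computation then shows that $r^2$ lies in $V_+\otimes V_-\subset S^2(\mathbb{C}^{2n})$, equal up to a harmless nonzero scalar to the natural pairing between $V_+$ and $V_-$.

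Second I would use the bigraded decomposition
$$S^a(\mathbb{C}^{2n}) \;=\; \bigoplus_{b+c=a} S^b(V_+)\otimes S^c(V_-),$$
and observe that multiplication by $r^2$ carries $S^{b-1}(V_+)\otimes S^{c-1}(V_-)$ into $S^b(V_+)\otimes S^c(V_-)$ (and contributes nothing when $b=0$ or $c=0$). Passing to the quotient one bidegree at a time,
$$\pi^{O(2n)}_a \;=\; \bigoplus_{b+c=a}\frac{S^b(V_+)\otimes S^c(V_-)}{r^2\cdot[S^{b-1}(V_+)\otimes S^{c-1}(V_-)]},$$
and each summand is precisely $\pi^{U(n)}_{b,c}$ under the identifications $V_+\simeq \mathbb{C}^n$, $V_-\simeq \overline{\mathbb{C}^n}$. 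This is the asserted branching formula.

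The Casimir identity then requires no further work: by \eqref{eq:LR} with $n$ replaced by $2n$, $\pi^{O(2n)}_a(\Omega_{O(2n)}) = a^2 + (2n-2)a$; setting $a=b+c$ rewrites this as $(b+c)^2+2(n-1)(b+c)$, which by \eqref{e:extraCspec} equals $[\pi^{U(n)}_{b,c}]^{U(n-1)}(2\Omega_{U(n)}-\Omega_{U(1)})$. The step where care is really required is the coordinate check identifying $r^2$ with the $V_+\otimes V_-$ pairing; once that is in hand, the two polynomial models and their distinguished submodules coincide bidegree by bidegree, and the branching becomes transparent.
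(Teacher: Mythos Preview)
Your argument is correct and gives a clean, self-contained proof of the branching law. The paper takes a different route: rather than matching the two polynomial models bidegree by bidegree, it leans on the harmonic analysis already in hand. Both $O(2n)$ and $U(n)$ act on $S^{2n-1}$, and the respective multiplicity-free decompositions of $L^2$ are known from \eqref{eq:Osphere} and \eqref{e:Ckey}; hence each $\pi^{U(n)}_{b,c}$ sits inside exactly one $\pi^{O(2n)}_a$. The metric comparison establishing \eqref{eq:LOU}, namely $L_O = 2L_U - L_{U(1)}$, then forces the eigenvalue identity $a^2+(2n-2)a = (b+c)^2+(2n-2)(b+c)$, whence $a=b+c$. (This matching argument is spelled out explicitly only in the quaternionic analogue, around \eqref{e:extraHspec}.) Your route is purely algebraic and makes the branching visible on the level of explicit harmonic polynomials, with the Casimir identity reduced to a citation of \eqref{eq:LR} and \eqref{e:extraCspec}. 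The paper's route, by contrast, explains \emph{why} the particular combination $2\Omega_{U(n)}-\Omega_{U(1)}$ is the right object to compare with $\Omega_{O(2n)}$: it is the operator identity \eqref{eq:LOU} between Laplacians on the sphere, derived from the Riemannian geometry, rather than a post-hoc numerical coincidence.
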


\section{The quaternionic calculation}
\label{sec:H}
\setcounter{equation}{0}

\begin{subequations}\label{se:Hsphere}
Suppose $n\ge 1$ is an integer. Write $Sp(n)$ for the unitary group
of the standard Hermitian inner product on ${\mathbb H}^n$. This is a
group of ${\mathbb H}$-linear transformations; that is, ${\mathbb
  R}$-linear transformations commuting with scalar multiplication by
${\mathbb H}$. Because ${\mathbb H}$ is noncommutative, these scalar
multiplications do {\em not} commute with each other, and so are {\em
  not} linear. It is therefore possible and convenient to enlarge
$Sp(n)$ to
\begin{equation}\label{eq:Spbig}
  Sp(n) \times Sp(1) = Sp(n)_{\text{linear}} \times
  Sp(1)_{\text{scalar}};
\end{equation}
the second factor is scalar multiplication by unit quaternions. This
enlarged group acts on ${\mathbb H}^n$, by the
formula
\begin{equation}
  (g_{\text{linear}},z_{\text{scalar}})\cdot v = gvz^{-1};
\end{equation}
we need the inverse to make the right action of scalar multiplication
into a left action. The action preserves length, and so can be
restricted to the $(4n-1)$-dimensional sphere
\begin{equation}
  S^{4n-1} = \{v\in {\mathbb H}^n \mid \langle v,v\rangle = 1\}
\end{equation}
We choose as a base point
\begin{equation}
  e_1 = (1,0,\ldots,0) \in S^{4n-1},
\end{equation}
which makes sense by our assumption that $n\ge 1$.
Then $Sp(n)\times Sp(1)$ acts transitively on $S^{2n-1}$, and the isotropy group at
$e_1$ is
\begin{equation}
  [Sp(n)\times Sp(1)]^{e_1} \simeq Sp(n-1) \times Sp(1)_\Delta.
\end{equation}
Here we embed $Sp(n-1)$ in $Sp(n)$ by acting on the last $n-1$
coordinates, and the last factor is the diagonal subgroup in
$Sp(1)_{\text{linear}}$ (acting on the first coordinate) and
$Sp(1)_{\text{scalar}}$. This shows
\begin{equation}
  S^{4n-1} \simeq [Sp(n)\times Sp(1)]/[Sp(n-1) \times Sp(1)_\Delta].
\end{equation}
\end{subequations} 

\begin{subequations}\label{se:Hreps}
Here is the representation theory. We omit the case $n=1$,
which is a degenerate version of the same thing; so assume $n\ge 2$. A
maximal torus in
$Sp(n)$ is
\begin{equation}
  T = U(1)^n,
\end{equation}
$n$ copies of the unit complex numbers acting diagonally on ${\mathbb
  H}^n$. A weight is therefore an $n$-tuple of integers.  For all
integers $d\ge e\ge 0$ there is an irreducible representation
\begin{equation}\begin{aligned}
    \pi^{Sp(n)}_{d,e} &\text{\ of highest weight\ } (d,e,0,\ldots,0,0), \\[.3ex]
    \dim \pi^{Sp(n)}_{d,e}&= \frac{(d+e+2n-1)(d-e+1)\prod_{j=1}^{2n-3}
    (d+j+1)(e+j)}{(2n -1)(2n-2)\cdot [(2n-3)!]^2}. 
\end{aligned}\end{equation}

A maximal torus in $Sp(1)$ is $U(1)$, and a weight is an integer. For
each integer $f\ge 0$ there is an irreducible representation
\begin{equation}
  \pi^{Sp(1)}_f \text{\ of highest weight\ } f,\quad \dim \pi^{Sp(1)}_f =
  f+1.
  \end{equation}

We are interested in the representations (for $d\ge e \ge 0$)
\begin{small}
\begin{equation}\begin{aligned}
    \pi^{Sp(n)\times Sp(1)}_{d,e} &= \pi^{Sp(n)}_{d,e}\otimes
    \pi^{Sp(1)}_{d-e} \\
    \dim \pi^{Sp(n)\times Sp(1)}_{d,e}&= \frac{(d+e+2n-1)(d-e+1)^2\prod_{j=1}^{2n-3}
    (d+j+1)(e+j)}{(2n -1)(2n-2)\cdot [(2n-3)!]^2}. 
\end{aligned}\end{equation}\end{small}
Notice that the polynomial giving the dimension has degree $4n-3$.

Using the calculation of $\rho$ given in \eqref{eq:Hrho} below, we
find
\begin{small}\begin{equation}\label{eq:Hinflchar}
\hbox{infl.~char.}(\pi^{Sp(n)\times Sp(1)}_{d,e}) =
(d+n,e+(n-1),n-2,\cdots,1)(d-e+1).
\end{equation}\end{small}

The key fact is that
\begin{equation}\label{e:Hkey}\begin{aligned}
  \dim [\pi^{Sp(n)\times Sp(1)}_{d,e}]^{Sp(n-1)\times Sp(1)_\Delta} &= 1 \qquad (d\ge e \ge 0),\\
  \dim\pi^{Sp(n-1)\times Sp(1)_\Delta} &= 0 \quad (\pi \not\simeq
  \pi^{Sp(n)\times Sp(1)}_{d,e}).
  \end{aligned}
\end{equation}
Therefore
\begin{equation}
  L^2(S^{4n-1}) \simeq \sum_{d\ge e \ge 0} \pi^{Sp(n)\times Sp(1)}_{d,e}
\end{equation}
as representations of $Sp(n)\times Sp(1)$. 

Here is one more piece of representation-theoretic information. We saw that
$Sp(n-1)\times Sp(1)_\Delta \subset Sp(n-1)\times Sp(1)\times Sp(1)
\subset Sp(n) \times Sp(1)$; so inside any
representation of $Sp(n)\times Sp(1)$ we get a natural representation of
$Sp(1)\times Sp(1)$ generated by the $Sp(n-1)\times Sp(1)_\Delta$
fixed vectors. The last fact is
\begin{equation}\label{e:extraHreps}\begin{aligned}
\ [Sp(1)\times Sp(1)] &\cdot [\pi^{Sp(n)\times
    Sp(1)}_{d,e}]^{Sp(n-1)\times Sp(1)_\Delta} \\ &= \text{irr of
  highest weight  $(d-e,d-e)$.}
\end{aligned}\end{equation} 
This representation has infinitesimal character
\begin{equation}\label{eq:Hsubinflchar}\begin{aligned}
\hbox{infl.~char.}\big([Sp(1)\times Sp(1)]&\cdot[\pi^{Sp(n)\times
    Sp(1)}_{d,e}]^{Sp(n-1)\times Sp(1)_\Delta}\big) \\ &= (d-e+1,d-e+1).
\end{aligned}\end{equation}

\end{subequations}

\begin{subequations}\label{se:Horbit}
Here is the orbit method perspective. (To simplify the notation, we
will discuss only $G=Sp(n)$ rather than $Sp(n) \times Sp(1)$.) The Lie
algebra ${\mathfrak 
  g}_0$ consists of $n\times n$ skew-hermitian quaternionic matrices;
${\mathfrak h}_0$ is the subalgebra in which the last row and column are zero.
Define
\begin{equation}
  d_{\text{orbit}} = d+(n-1), \qquad e_{\text{orbit}} = e+ (n-2).
\end{equation}
We need also an auxiliary parameter
\begin{equation}
  r_{\text{orbit}} = (d_{\text{orbit}}e_{\text{orbit}})^{1/2}.
\end{equation}

Now define a linear functional

\smallskip
{\small
\begin{equation}
\lambda(d_{\text{orbit}},e_{\text{orbit}}) =\begin{pmatrix}  i(d_{\text{orbit}} +
e_{\text{orbit}})& r_{\text{orbit}} & \quad 0&\dots & 0\\ -r_{\text{orbit}}
& 0 & \quad 0 &\dots &0\\[1ex] 0&0 &\\ \vdots&\vdots & & \text{\Large
  $0_{(n-2)\times (n-2)}$}\\[-1ex]  \\ 0&  0 &&&\end{pmatrix} \in
({\mathfrak g}_0/{\mathfrak h}_0)^*.
\end{equation}}

This skew-hermitian matrix has been constructed to be orthogonal to
${\mathfrak h}_0$, and to be conjugate by $G$ to
\begin{equation}
  \begin{pmatrix} id_{\text{orbit}}& 0 &0& \dots & 0\\
    0& ie_{\text{orbit}}& 0& \dots & 0\\[1ex] 0& 0 &&\\ \vdots& \vdots&
      & \text{\Large $0_{(n-2)\times(n-2)}$}\\
  0&0 \end{pmatrix}
\end{equation}

\smallskip
With this notation,
\begin{equation}
  \pi_{d,e}^{Sp(n)} = \pi(\text{orbit\ } \lambda(d_{\text{orbit}},e_{\text{orbit}})).
\end{equation}
An aspect of the orbit method perspective is that the ``natural''
dominance condition is no longer $d\ge e\ge 0$ but rather
\begin{equation}
  d_{\text{orbit}} > e_{\text{orbit}} > 0 \iff d + 1 >  e > -(n-1).
\end{equation}
For the compact group $Sp(n)$ we have
\begin{equation}
\pi(\text{orbit\ } \lambda(d_{\text{orbit}},e_{\text{orbit}}))=0
\quad \text{if}\quad 0 > e > -(n-1)
\end{equation}
so the difference is not important. But matters will be more
interesting in the noncompact case (Section \ref{sec:Sppq}).
\end{subequations} 

\begin{subequations}\label{se:Hspec}
  Now we're ready for spectral theory.   Because the group is a
  product, it is natural to calculate the eigenvalues of the Casimir
  operators from the two factors separately. We calculate first
  $\pi^{Sp(n)\times Sp(1))}_{d,e}(\Omega_{Sp(n)})$. The sum of the positive roots is
  \begin{equation}\label{eq:Hrho}
    2\rho(Sp(n)) = (2n, 2n-2,\cdots,2).
  \end{equation}
Because our highest weight for $Sp(n)$ is
  \begin{equation}
    \lambda = (d,e,0,\ldots,0),
  \end{equation}
  we find
  \begin{equation}\label{eq:LH}\begin{aligned}
      \pi^{Sp(n)}_{d,e}(\Omega_{Sp(n)}) &= d^2 + e^2 + 2nd + 2(n-1)e\\
      &=d_{\text{orbit}}^2 + e_{\text{orbit}}^2 - n^2 - (n-1)^2. \end{aligned}
  \end{equation}
Similarly
\begin{equation}
  \pi^{Sp(n)\times Sp(1)}_{d,e}(\Omega_{Sp(1)}) = (d-e)^2 + 2(d-e) = d^2 + e^2 -
  2de + 2(d-e).
\end{equation}
Combining the last two equations gives
\begin{equation}\label{e:extraHspec}\begin{aligned}
  \pi^{Sp(n)\times Sp(1)}_{d,e}(2\Omega_{Sp(n)} - \Omega_{Sp(1)}) &= (d+e)^2 +
  (4n-2)(d+e)\\ &=\pi^{O(4n)}_{d+e}(\Omega_{O(4n)}).
  \end{aligned}\end{equation}
This formula is first of all just an algebraic identity, obtained by
plugging in $a=d+e$ and $4n$ in the formula \eqref{eq:LR}. But it has
a more serious meaning. Let us directly compare the metrics $g_O$ and
$g_{Sp}$ on $S^{4n-1}$, as we did for $g_{U}$ in Section
\ref{sec:C}. We find that on a $(4n-4)$-dimensional subspace of the
tangent space, $g_O$ is some multiple $x\cdot g_{Sp}$; and on the
orthogonal $3$-dimensional subspace (corresponding to the $Sp(1)\simeq
S^3$ fibers of the bundle $S^{4n-1} \rightarrow {\mathbb
  P}^{n-1}({\mathbb H})$) there is a different relationship $g_O =
y\cdot g_{Sp}$. (It is not difficult to check by more careful
calculation that $x=2$ and $z=1$, but we are looking here for what is
obvious.)  It follows that
$$    L_{O} = xL_{Sp} - zL_{Sp(1)},$$
  exactly as in \eqref{eq:LOU}. If now
  $$\pi_{d,e}^{Sp(n)\times Sp(1)} \subset \pi_a^{O(4n)},$$
  then we conclude (by computing the Laplacian separately in these two
  representations) that there is (for all integers $d\ge e \ge 0$) an
  algebraic identity
$$    x(d^2+e^2+2nd +2(n-1)e) - z((d-e)^2 + 2(d-e)) = a^2 + (4n-2)a;$$
here $a\ge 0$ is some integer depending on $d$ and $e$. Since every
integer $a \ge 0$ must appear in such an identity, it follows easily
that $x=2$ and $z=1$, and that $a=d+e$. In particular,
  \begin{equation}\label{eq:LOSp}
    L_{O} = 2L_{Sp} - L_{Sp(1)}.
  \end{equation}
This means that the equation \eqref{e:extraHspec} is describing two
calculations of $L_O$, in the subrepresentation
\begin{equation}
  \pi_{d,e}^{Sp(n)\times Sp(1)} \subset \pi_{d+e}^{O(4n)}.
  \end{equation}
\end{subequations} 

Here is what we have proven about how the $Sp(n)$ and $O(4n)$
calculations fit together.
\begin{theorem} \label{thm:OSpcptbranch} Suppose $n\ge 2$, and $a$ is
  a non-negative integer. Using the map $Sp(n)\times
  Sp(1)\rightarrow O(4n)$, we have
  $$\pi^{O(4n)}_a|_{Sp(n)\times Sp(1)} = \sum_{\substack{d\ge e \ge 0 \\ d+e= a}}
  \pi^{Sp(n)\times Sp(1)}_{d,e}.$$
 
The contribution of these representations to the spectrum of the
$O(4n)$-invariant Laplacian $L_O$ is
$$\begin{aligned}\pi^{O(4n)}_a(\Omega_{O(4n)}) &= a^2 + (4n-2)a \\
  & = (d+e)^2 + (4n-2)(d+e) \\
  &= \pi^{Sp(n)\times Sp(1)}_{d,e}(2\Omega_{Sp(n)} - \Omega_{Sp(1)}).\end{aligned}$$
\end{theorem}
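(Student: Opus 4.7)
The plan is to derive both the branching and the Casimir identity from a single principle: inside $L^2(S^{4n-1})$, the $O(4n)$-invariant Laplacian $L_O$ distinguishes the $O(4n)$-isotypic components, and its action refines compatibly with the $Sp(n)\times Sp(1)$-decomposition. First I would combine the classical $O(4n)$-decomposition \eqref{eq:Osphere} (with $n$ replaced by $4n$) with the quaternionic decomposition \eqref{e:Hkey}:
\begin{equation*}
\bigoplus_{a\ge 0}\pi^{O(4n)}_a \;\simeq\; L^2(S^{4n-1}) \;\simeq\; \bigoplus_{d\ge e\ge 0}\pi^{Sp(n)\times Sp(1)}_{d,e}.
\end{equation*}
Since the map $a\mapsto a^2+(4n-2)a$ is strictly increasing on $\{a\ge 0\}$, the $\pi^{O(4n)}_a$ are precisely the $L_O$-eigenspaces, by \eqref{eq:LR}. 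Because $L_O$ is $Sp(n)\times Sp(1)$-equivariant, it acts as a scalar on each $\pi^{Sp(n)\times Sp(1)}_{d,e}$, so the branching reduces to identifying which $(d,e)$ yield the eigenvalue $a^2+(4n-2)a$.

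The next step is to establish the operator identity $L_O = 2L_{Sp(n)\times Sp(1)} - L_{Sp(1)}$ of \eqref{eq:LOSp} directly, by a metric comparison mirroring \eqref{eq:Rtan}--\eqref{eq:gU} from the complex case. I would decompose $T_{e_1}S^{4n-1} = V_{\mathrm{hor}}\oplus V_{\mathrm{ver}}$, where $V_{\mathrm{ver}}\simeq \mathrm{Im}(\mathbb{H})$ is the tangent to the $Sp(1)_\Delta$-orbit through $e_1$ (the fiber of the quaternionic Hopf fibration $S^{4n-1}\to\mathbb{P}^{n-1}(\mathbb{H})$) and $V_{\mathrm{hor}}\simeq \mathbb{H}^{n-1}$ is its orthogonal complement. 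Expressing each tangent vector as the action of a suitable skew-Hermitian quaternionic matrix and computing $-\mathrm{Re}\,\mathrm{tr}$, exactly as in \eqref{eq:gU}, would show $g_{Sp}=2g_O$ on $V_{\mathrm{hor}}$ and $g_{Sp}=g_O$ on $V_{\mathrm{ver}}$. The standard rescaling relation between metric and Laplacian in orthogonal factors then delivers \eqref{eq:LOSp} with coefficients $2$ and $1$ pinned down geometrically, rather than by the algebraic identity (which would be circular here, since it presupposes the very branching under consideration).

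Once \eqref{eq:LOSp} is in hand, the eigenvalue of $L_O$ on $\pi^{Sp(n)\times Sp(1)}_{d,e}$ is $2\pi^{Sp(n)}_{d,e}(\Omega_{Sp(n)}) - \pi^{Sp(1)}_{d-e}(\Omega_{Sp(1)})$, which by \eqref{eq:LH} and the $Sp(1)$-Casimir computation equals $(d+e)^2 + (4n-2)(d+e)$, i.e.\ the content of \eqref{e:extraHspec}. Matching this against $a^2+(4n-2)a$ forces $d+e=a$, yielding both the branching decomposition and the Casimir identity of the theorem simultaneously. The main obstacle is the rigorous metric-side derivation producing the precise coefficients $(2,1)$: one must carefully track how the normalizations of the bilinear forms $\langle\cdot,\cdot\rangle_{O(4n)}$ and $\langle\cdot,\cdot\rangle_{Sp(n)\times Sp(1)}$ interact with the differing geometries of the horizontal and vertical subspaces. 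Every other ingredient is already assembled in \eqref{se:Hreps}--\eqref{se:Hspec}.
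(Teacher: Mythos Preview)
Your approach is correct and follows the same overall architecture as the paper: use the two decompositions of $L^2(S^{4n-1})$, establish that $L_O$ is a specific linear combination of $L_{Sp}$ and $L_{Sp(1)}$, and then read off $a=d+e$ by matching eigenvalues. The only genuine difference lies in how the coefficients $(2,1)$ in \eqref{eq:LOSp} are pinned down. You propose to compute them directly from the metric (as in \eqref{eq:gO}--\eqref{eq:gU}); the paper instead establishes only the qualitative relation $L_O = xL_{Sp} - zL_{Sp(1)}$ with unknown scalars, and then determines $x$, $z$, and the unknown function $a(d,e)$ \emph{simultaneously} from the family of identities $x(d^2+e^2+2nd+2(n-1)e) - z((d-e)^2+2(d-e)) = a(d,e)^2 + (4n-2)a(d,e)$ together with the surjectivity of $(d,e)\mapsto a(d,e)$. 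The paper itself remarks that your direct route is ``not difficult'' but deliberately avoids it.

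One correction: you dismiss the paper's algebraic determination of $(x,z)$ as ``circular, since it presupposes the very branching under consideration.'' It does not. The paper never assumes $a(d,e)=d+e$; it only uses that each $\pi^{Sp(n)\times Sp(1)}_{d,e}$ lies in \emph{some} $\pi^{O(4n)}_{a(d,e)}$, which follows immediately from the two $L^2$-decompositions and the injectivity of $a\mapsto a^2+(4n-2)a$ on $a\ge 0$. The resulting overdetermined system in $x$, $z$, and the integer-valued function $a(\cdot,\cdot)$ has the unique solution $x=2$, $z=1$, $a=d+e$. Your direct metric computation is cleaner and more self-contained, while the paper's argument has the virtue of sidestepping any bookkeeping of normalizations; but neither is circular.
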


\section{The octonionic calculation}
\label{sec:O}
\setcounter{equation}{0}

We will make no explicit discussion of octonions, except to say that
$F_4$ is related; and that the non-associativity of octonions makes it
impossible to define a ``projective space'' except in octonionic
dimension one. That is why this example is not part of an infinite
family like the real, complex, and quaternionic ones.

\begin{subequations}\label{se:Osphere}
Write $\Spin(9)$ for the compact spin double cover of $SO(9)$. This
group can be defined using a spin representation $\sigma$, which has dimension
$2^{(9-1)/2} = 16$. The representation is real, so we fix a
realization $(\sigma_{\mathbb R},V_{\mathbb R})$ on a
sixteen-dimensional real vector space. Of course the compact group
$\Spin(9)$ preserves a positive definite inner product on $V_{\mathbb
  R}$, and
\begin{equation}
  S^{15} = \{v\in V_{\mathbb R} \mid \langle v,v\rangle = 1\}
\end{equation}
We choose as a base point
\begin{equation}
  v_1 \in S^{15},
\end{equation}
Then $\Spin(9)$ acts transitively on $S^{15}$. (Once one knows that
$F_{4,c}/\Spin(9)$ is a (sixteen-dimensional) rank one Riemannian
symmetric space, and that the action of $\Spin(9)$ on the tangent
space at the base point is the spin representation, then this is
Cartan's result \eqref{e:rank1}.) The isotropy group at $v_1$ is
\begin{equation}
  \Spin(9)^{v_1} \simeq \Spin(7)'.
\end{equation}
The embedding of $\Spin(9)^{v_1}$ in $\Spin(9)$ can be described as follows.
First, we write
\begin{equation}
  \Spin(8)\subset \Spin(9)
\end{equation}
for the double cover of $SO(8) \subset SO(9)$. Next, we embed
\begin{equation}
  \Spin(7)' \ {\buildrel{\text{spin}}\over \longrightarrow}\  \Spin(8).
\end{equation}
(We use the prime to distinguish this subgroup from the double cover
of $SO(7) \subset SO(8)$, which we will call $\Spin(7) \subset
\Spin(8)$.) The way this works is that the spin representation of
$\Spin'(7)$ has dimension $2^{(7-1)/2}= 8$, is real, and preserves a
quadratic form, so $\Spin'(7) \subset SO(8)$. (Another explanation
appears in \eqref{se:bigG2sphere} below.) Now take the double cover
of this inclusion. This shows
\begin{equation}
  S^{15} \simeq \Spin(9)/\Spin(7)'.
\end{equation}
\end{subequations} 

\begin{subequations}\label{se:Oreps}
Here is the representation theory. A maximal torus in $\Spin(9)$ is a
double cover of $SO(2)^4 \subset SO(9)$. A weight is {\em either} a
$4$-tuple of integers (the weights factoring to $SO(2)^4$) {\em or} a
$4$-tuple from ${\mathbb Z} + 1/2$.
For all integers $x\ge 0$ and $y \ge 0$ there is an irreducible representation
\small
\begin{equation}\begin{aligned}
    \pi^{\Spin(9)}_{x,y} &\text{\ of highest weight\ } (y/2 +
    x,y/2,y/2,y/2),\\[.3ex]
 \dim \pi^{\Spin(9)}_{x,y} &=
  \frac{(2x+y+7)\prod_{j=1}^3 
    (x+j)(y+j+1)(y+2j-1)(x+y+j+3)}{7!\cdot 6! \cdot(1/2)}
\end{aligned}\end{equation}
\normalsize
Notice that the polynomial giving the dimension has degree $13$.

Using the calculation of $\rho$ given in \eqref{eq:octrho} below, we
find
\begin{small}
\begin{equation}\label{eq:octinflchar}
\hbox{infl.~char.}(\pi^{\Spin(9)}_{x,y}) =
((2x+y+7)/2,(y+5)/2,(y+3)/2,(y+1)/2).
\end{equation}
\end{small}

The key fact is that
\begin{equation}\label{e:Okey}\begin{aligned}
  \dim [\pi^{\Spin(9)}_{x,y}]^{\Spin(7)'} &= 1 \qquad (x\ge 0,\ y \ge
  0),\\
  \dim\pi^{\Spin(7)'} &= 0 \quad (\pi \not\simeq \pi^{\Spin(9)}_{x,y}).
\end{aligned}\end{equation}
Therefore
\begin{equation}
  L^2(S^{15}) \simeq \sum_{x\ge 0,\ y \ge 0} \pi^{\Spin(9)}_{x,y}
\end{equation}
as representations of $\Spin(9)$.

Here is one more piece of representation-theoretic information. We saw that
$\Spin(7)'\subset\Spin(8) \subset \Spin(9)$; so inside any
representation of $\Spin(9)$ we get a natural representation of
$\Spin(8)$ generated by the $\Spin(7)'$ fixed vectors. The last fact is
\begin{small}\begin{equation}\label{e:extraOreps}
\Spin(8)\cdot [\pi^{\Spin(9)}_{x,y}]^{\Spin(7)'} = \text{irr of
    highest weight  $(y/2,y/2,y/2,y/2)$.}
\end{equation}\end{small}
This representation has infinitesimal character
\begin{small}\begin{equation}\label{eq:octsubinflchar}
\hbox{infl.~char.}\left(\Spin(8)\cdot[\pi^{\Spin(9)}_{x,y}]^{\Spin(7)'}\right) =
((y+6)/2,(y+4)/2,(y+2)/2,y/2).
\end{equation}\end{small}
Here is why this is true. Helgason's theorem about symmetric spaces says that
the representations of $\Spin(8)$ of highest weights
\begin{equation}\label{eq:87'}
  (y/2,y/2,y/2,y/2)
\end{equation}
are precisely the ones having a $\Spin(7)'$-fixed
vector, and furthermore this fixed vector is unique.  The
corresponding statement for $\Spin(8)/\Spin(7)$ is the case $n=8$ of
Theorem \ref{thm:Rspec}. In that case the highest weights for
$\Spin(8)$ appearing are the multiples of the fundamental weight
$(1,0,0,0)$ (corresponding to the simple root at the end of the
``long'' leg of the Dynkin diagram of $D_4$. For
$\Spin(8)/\Spin(7)'$, the weights appearing must therefore be
multiples of the fundamental weight $(1/2,1/2,1/2,1/2)$ for a simple
root on one of the ``short'' legs of the Dynkin diagram, proving
\eqref{eq:87'}.

To complete the proof of \eqref{e:Okey} using \eqref{eq:87'} we need only the
classical branching theorem for $\Spin(8) \subset \Spin(9)$ (see for example
\cite{KBeyond}*{Theorem 9.16}).
\end{subequations} 

\begin{subequations}\label{se:Oorbit}
Here is the orbit method perspective. Define
\begin{equation}
  x_{\text{orbit}} = x+ 2, \qquad y_{\text{orbit}} = y+3.
\end{equation}
Then it turns out that there is a $9\times 9$ real skew-symmetric
matrix $\lambda(x_{\text{orbit}}, y_{\text{orbit}})$ (which we will
not attempt to write down) with the properties

\smallskip
\begin{equation} \begin{aligned}
\lambda(x_{\text{orbit}},y_{\text{orbit}}) &\in ({\mathfrak
  g}_0/{\mathfrak h}_0)^*\\
\lambda(x_{\text{orbit}},y_{\text{orbit}}) &\ \text{has eigenvalues}\\
  &\ \text{$\pm i(x_{\text{orbit}}/2+y_{\text{orbit}}/4)$ and
  $\pm i(y_{\text{orbit}}/4)$ (three times).} \end{aligned}
\end{equation}

\smallskip

Consequently
\begin{equation}
  \pi_{x,y}^{\Spin(9)} = \pi(\text{orbit\ }
  \lambda(x_{\text{orbit}},y_{\text{orbit}})).
\end{equation}
An aspect of the orbit method perspective is that the ``natural''
dominance condition is no longer $x,y \ge 0$ and but rather
\begin{equation}
  x_{\text{orbit}},\  y_{\text{orbit}} > 0 \iff x > -2, y > -3.
\end{equation}
For the compact group $\Spin(9)$ we have
\begin{equation}
\pi(\text{orbit\ } \lambda(x_{\text{orbit}},y_{\text{orbit}}))=0
\quad \text{if}\quad 0 > x > -2 \ \text{or} \ 0 > y > -3;
\end{equation}
so the difference is not important. But matters will be more
interesting in the noncompact case (Section \ref{sec:spinp9-p}).
\end{subequations} 

\begin{subequations}\label{se:Ospec}
  Now we're ready for spectral theory.   We need to calculate
  $\pi^{\Spin(9)}_{x,y}(\Omega_{\Spin(9)})$. The sum of the positive roots is
  \begin{equation}\label{eq:octrho}
    2\rho(\Spin(9)) = (7,5,3,1).
  \end{equation}
Because our highest weight is
  \begin{equation}
    \lambda = (y/2 + x,y/2,y/2,y/2),
  \end{equation}
  we find
  \begin{equation}\label{eq:LO}
    \pi^{\Spin(9)}_{x,y}(\Omega_{\Spin(9)}) = x^2 + y^2 + xy +8y + 7x.
  \end{equation}

Just as for the representation theory above, we'll add one more piece
of information without explaining why it will be useful:
\begin{equation}
  \left(\Spin(8) \cdot [\pi^{\Spin(9)}_{x,y}]^{\Spin(7)'}
  \right)(\Omega_{\Spin(8)}) =  y^2 + 6y \end{equation}
Combining the last two equations gives
\begin{equation}\label{e:extraOspec}\begin{aligned}
  \left(\Spin(8)\cdot[\pi^{\Spin(9)}_{x,y}]^{\Spin(7)'}\right)(4\Omega_{\Spin(9)} -
  3\Omega_{\Spin(8)}) &= (2x+y)^2 + 14(2x+y)\\
 &=\pi^{O(16)}_{2x+y}(\Omega_{O(16)}).
\end{aligned}\end{equation}
The last equality can be established exactly as in
\eqref{e:extraHspec}.
\end{subequations} 

Here is how the $\Spin(9)$ and $O(16)$ calculations fit together.
\begin{theorem}\label{thm:Ospin9cptbranch} Using the inclusion
  $\Spin(9) \subset O(16)$ given by the spin representation, we have
  $$\pi_a^{O(16)}|_{\Spin(9)} = \sum_{\substack{x\ge 0,\ y\ge 0
      \\[.2ex] 2x+y=a}} \pi_{x,y}^{\Spin(9)}.$$
  The contribution of these representations to the spectrum of the
  $O(16)$-invariant Laplacian $L_O$ is
  $$\begin{aligned}
    \pi^{O(16)}_a(\Omega_{O(16)}) &= a^2 + (16-2)a\\ &= (2x+y)^2 +
    14(2x+y) \\
    &=
    \left(\Spin(8)\cdot[\pi^{\Spin(9)}_{x,y}]^{\Spin(7)'}]\right)(4\Omega_{\Spin(9)}
      - 3\Omega_{\Spin(8)}).\end{aligned}$$
\end{theorem}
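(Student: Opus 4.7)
The plan is to establish the three equalities in the displayed formula and then the branching rule. The first equality $\pi^{O(16)}_a(\Omega_{O(16)}) = a^2 + 14a$ is Theorem~\ref{thm:Rspec} at $n = 16$. The remaining two equalities reduce to an algebraic identity: combining \eqref{eq:LO}, which gives $\Omega_{\Spin(9)}$ the eigenvalue $x^2 + y^2 + xy + 8y + 7x$ on $\pi^{\Spin(9)}_{x,y}$, with the $\Spin(8)$-Casimir eigenvalue $y^2 + 6y$ on the highest weight $(y/2, y/2, y/2, y/2)$ $\Spin(8)$-irrep (an immediate Weyl-formula computation using the $D_4$ half-sum $\rho = (3,2,1,0)$), one checks directly
\[
4(x^2 + y^2 + xy + 8y + 7x) - 3(y^2 + 6y) = 4x^2 + 4xy + y^2 + 28x + 14y = (2x+y)^2 + 14(2x+y),
\]
matching $a^2 + 14a$ at $a = 2x+y$.

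The substantive content is the branching $\pi^{O(16)}_a|_{\Spin(9)} = \bigoplus_{2x+y = a,\, x,y \geq 0} \pi^{\Spin(9)}_{x,y}$. Since both decompositions $L^2(S^{15}) = \bigoplus_a \pi^{O(16)}_a$ (Theorem~\ref{thm:Rspec}) and $L^2(S^{15}) = \bigoplus_{x,y \geq 0} \pi^{\Spin(9)}_{x,y}$ (equation~\eqref{e:Okey}) are multiplicity-free, each $\pi^{\Spin(9)}_{x,y}$ sits inside a unique $\pi^{O(16)}_{a(x,y)}$, and it remains to prove $a(x,y) = 2x+y$. Following the blueprint of the quaternionic calculation leading to \eqref{eq:LOSp}, I would compare the $O(16)$-metric $g_O$ on $S^{15}$ (induced from the spin embedding $S^{15}\subset \mathbb{R}^{16}$) with the $\Spin(9)$-metric $g_{\Spin(9)}$ (induced from a bi-invariant form on $\mathfrak{spin}(9)$). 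These two metrics are adapted to the $\Spin(9)$-equivariant octonionic Hopf fibration $S^7 \to S^{15} \to S^8$, with base $S^8 = \Spin(9)/\Spin(8)$ and fiber $S^7 = \Spin(8)/\Spin(7)'$; on each of the base-tangent and fiber-tangent summands, $g_O$ is a scalar multiple of $g_{\Spin(9)}$, with scale factors $x_{\text{sc}}$ and $z_{\text{sc}}$ respectively. This produces an operator identity
\[
L_O = x_{\text{sc}}\, L_{\Spin(9)} - z_{\text{sc}}\, L_{\Spin(8)}
\]
on $\Spin(7)'$-fixed functions, parallel to \eqref{eq:LOSp}. Evaluating both sides on the one-dimensional $\Spin(7)'$-fixed line inside $\pi^{\Spin(9)}_{x,y} \subset \pi^{O(16)}_{a(x,y)}$ gives
\[
a(x,y)^2 + 14\, a(x,y) = x_{\text{sc}}(x^2 + y^2 + xy + 8y + 7x) - z_{\text{sc}}(y^2 + 6y);
\]
since every non-negative integer $a$ must occur on the left ($\pi^{O(16)}_a$ is present in $L^2(S^{15})$ for every $a$), matching small cases forces $x_{\text{sc}} = 4$, $z_{\text{sc}} = 3$, and $a(x,y) = 2x + y$.

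The main obstacle is the geometric metric comparison: one must identify the octonionic Hopf fibration $S^7 \to S^{15} \to S^8$ explicitly inside the spin-representation sphere, decompose the tangent space $\Spin(9)$-equivariantly into the base and fiber summands, and compute the two scale factors relating $g_O$ to $g_{\Spin(9)}$. This is the octonionic analogue of the $Sp(n)$ computation used in Section~\ref{sec:H}, and the paper's remark that ``the last equality can be established exactly as in \eqref{e:extraHspec}'' suggests that the algebraic half of the argument is essentially forced once the Laplacians are related; the nontrivial new ingredient is the exceptional geometry of the spin representation of $\Spin(9)$.
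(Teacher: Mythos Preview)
Your proposal is correct and follows essentially the same route as the paper: the Casimir identities are verified algebraically, and the branching $a(x,y)=2x+y$ is obtained by writing $L_O = x_{\mathrm{sc}}\,L_{\Spin(9)} - z_{\mathrm{sc}}\,L_{\Spin(8)}$ and forcing the constants from the requirement that every $a\ge 0$ occur, exactly as in the argument after \eqref{e:extraHspec}. Your worry about the ``main obstacle'' is unnecessary: you do not need to compute the scale factors geometrically, since the mere \emph{existence} of such a relation follows from Schur's lemma (the tangent space $\mathfrak{spin}(9)/\mathfrak{spin}(7)'$ decomposes under $\Spin(7)'$ into two inequivalent irreducibles of dimensions $8$ and $7$, so any two invariant metrics differ by a pair of scalars), and the algebraic matching you already outlined then pins down $x_{\mathrm{sc}}=4$, $z_{\mathrm{sc}}=3$, and $a=2x+y$.
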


\section{The small $G_2$ calculation}
\label{sec:G2}
\setcounter{equation}{0}

\begin{subequations}\label{se:G2sphere}
Write $G_{2,c}$ for the $14$-dimensional compact connected Lie group
of type $G_2$. There is a $7$-dimensional real representation
$(\tau_{\mathbb R},W_{\mathbb R})$ of
$G_{2,c}$, whose (complexified) weights are zero and the six short
roots. The representation $\tau_{\mathbb R}$ preserves a positive
definite inner product, and so defines inclusions
\begin{equation}
  G_{2,c} \hookrightarrow SO(W_{\mathbb R}), \qquad  G_{2,c} \hookrightarrow
  \Spin(W_{\mathbb R}).
\end{equation}
The corresponding action of $G_{2,c}$ on $S^6$ is transitive. An
isotropy group is isomorphic to $SU(3)$; this is a subgroup generated
by a maximal torus and the long root $SU(2)$s. Therefore
\begin{equation}
  S^{6} = \{w\in W_{\mathbb R} \mid \langle v,v\rangle = 1\} \simeq G_{2,c}/SU(3).
\end{equation}
\end{subequations} 

\begin{subequations}\label{se:G2reps}
Here is the representation theory. Having identified a subgroup of
$G_{2,c}$ with $SU(3)$, we may as well take for our maximal torus in
$G_{2,c}$ the diagonal torus
\begin{equation}
  T = S(U(1)^3) \subset SU(3).
\end{equation}
The weights of $T$ are therefore
\begin{equation}
X^*(T) = \{\lambda=(\lambda_1,\lambda_2,\lambda_3) \mid \lambda_i -
\lambda_j \in {\mathbb Z},\quad  \lambda_1+\lambda_2 + \lambda_3 = 0\}.
\end{equation}
For each integer $a\ge 0$ there is an irreducible representation
\begin{equation}\label{eq:G2dim}
    \pi_{a} \text{\ highest wt\ }
    (2a/3,-a/3,-a/3), \quad
 \dim \pi_{a} =  \frac{(2a+5)\prod_{j=1}^4 (a+j)}{5!}
\end{equation}
Notice that the polynomial giving the dimension has degree $5$. In
fact it is exactly the polynomial of \eqref{eq:Rdim} giving the dimension of
$\pi_a^{O(7)}$.

Using the calculation of $\rho$ given in \eqref{eq:G2rho} below, we
find
\begin{equation}\label{eq:G2inflchar}
\hbox{infinitesimal character of\ }\pi_a =
((2a+5)/3,-(a+1)/3,-(a+4)/3).
\end{equation}

The key fact is that
\begin{equation}\label{e:G2key}
  \dim \pi_{a}^{SU(3)} = 1 \quad (a\ge 0), \qquad
  \dim\pi^{SU(3)} = 0 \quad (\pi \not\simeq \pi_{a}).
\end{equation}
Therefore
\begin{equation}
  L^2(S^{6}) \simeq \sum_{a\ge 0} \pi_{a}
\end{equation}
as representations of $G_{2,c}$.

\end{subequations}

\begin{subequations}\label{se:G2orbit}
Here is the orbit method perspective. Define
\begin{equation}
  a_{\text{orbit}} = a+5/2.
\end{equation}
Then it turns out that there is an element $\lambda(a_{\text{orbit}})
\in {\mathfrak g}_0^*$ (which we will
not attempt to write down) with the properties

\smallskip
\begin{equation} \begin{aligned}
\lambda(a_{\text{orbit}}) &\in ({\mathfrak  g}_0/{\mathfrak h}_0)^*\\
\lambda(a_{\text{orbit}}) &\ \text{is conjugate to}\\
&\quad a_{\text{orbit}}\cdot(2/3,-1/3,-1/3).
  \end{aligned}
\end{equation}

\smallskip

Consequently
\begin{equation}
  \pi_a = \pi(\text{orbit\ }
  \lambda(a_{\text{orbit}})).
\end{equation}
An aspect of the orbit method perspective is that the ``natural''
dominance condition is no longer $a \ge 0$ and but rather
\begin{equation}
  a_{\text{orbit}}  > 0 \iff a > -5/2.
\end{equation}
For the compact group $G_{2,c}$ we have
\begin{equation}
\pi(\text{orbit\ } \lambda(a_{\text{orbit}}))=0
\quad \text{if}\quad 0 > a > -5/2;
\end{equation}
so the difference is not important. But matters will be more
interesting in the noncompact case (Section \ref{sec:G2s}).
\end{subequations} 

\begin{subequations}\label{se:G2spec}
  Now we're ready for spectral theory.   We need to calculate
  $\pi_{a}(\Omega_{G_{2,c}})$. The sum of the positive roots is
  \begin{equation}\label{eq:G2rho}
    2\rho(G_{2,c}) = (10/3,-2/3,-8/3).
  \end{equation}
Because our highest weight is
  \begin{equation}
    \lambda =  (2a/3,-a/3,-a/3)
  \end{equation}
  we find
  \begin{equation}\label{eq:LG2}\begin{aligned}
    \pi_a(\Omega_{G_{2,c}}) &= 2a^2/3 + 10a/3 =
    2(a^2 +5a)/3\\
    &= (2/3)(a_{\text{orbit}}^2 - 25/4) \end{aligned}
  \end{equation}

\end{subequations} 

Here is how the $G_{2,c}$ and $O(7)$ calculations fit together.
\begin{theorem}
Using the inclusion $G_{2,c}\subset O(7)$, we have
  $$\pi^{O(7)}_a|_{G_{2,c}} = \pi^{G_{2,c}}_a.$$
The contribution of these representations to the spectrum of the
$O(7)$-invariant Laplacian $L_O$ is
$$\begin{aligned}\pi^{O(7)}_a(\Omega_{O(7)}) &= a^2 + 5a \\
  &= \pi^{G_{2,c}}_a(3\Omega_{G_{2,c}}/2).\end{aligned}$$
\end{theorem}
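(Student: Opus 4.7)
The plan is to identify $\pi^{O(7)}_a|_{G_{2,c}}$ with $\pi^{G_{2,c}}_a$ as the same subspace of $L^2(S^6)$, by matching them via their eigenvalues under the two invariant Laplacians on $S^6$. I would start from the multiplicity-free decompositions $L^2(S^6) = \bigoplus_{a\ge 0}\pi^{O(7)}_a$ (from \eqref{eq:Osphere} with $n=7$) and $L^2(S^6) = \bigoplus_{a'\ge 0}\pi^{G_{2,c}}_{a'}$ (from Section \ref{sec:G2}). Each summand on either side is an eigenspace of the corresponding invariant Laplacian: $\pi^{O(7)}_a$ for $L_O$ with eigenvalue $a^2+5a$ by \eqref{eq:LR} (with $n=7$), and $\pi^{G_{2,c}}_{a'}$ for $L_G$ with eigenvalue $2(a'^2+5a')/3$ by \eqref{eq:LG2}, using the general fact (invoked after \eqref{e:rank1}) that the group-theoretic Casimir acts on $L^2(G/H)$ as the Laplace--Beltrami operator for the induced invariant metric.

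The key step is to show that $L_O$ and $L_G$ are proportional as operators on $L^2(S^6)$. For this I would argue that any $G_{2,c}$-invariant Riemannian metric on $S^6\simeq G_{2,c}/SU(3)$ is unique up to positive scale, because the isotropy representation of $SU(3)$ on the 6-dimensional tangent space at the base point is irreducible: it is the standard complex representation of $SU(3)$ on $\mathbb{C}^3$, viewed as a real 6-dimensional representation. Since the $O(7)$-invariant metric on $S^6$ is in particular $G_{2,c}$-invariant, the two metrics are proportional, and hence so are the Laplacians: $L_O = \alpha L_G$ for some $\alpha>0$.

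This proportionality forces each $L_O$-eigenspace in $L^2(S^6)$ to coincide, as a subspace, with an $L_G$-eigenspace; equivalently, $\pi^{O(7)}_a|_{G_{2,c}}=\pi^{G_{2,c}}_{\sigma(a)}$ for some bijection $\sigma\colon\mathbb{Z}_{\ge 0}\to\mathbb{Z}_{\ge 0}$ matching eigenvalues. To identify $\sigma$ I would compare dimensions: both \eqref{eq:Rdim} (with $n=7$) and \eqref{eq:G2dim} yield $\dim\pi^{O(7)}_a = \dim\pi^{G_{2,c}}_a = (2a+5)(a+1)(a+2)(a+3)(a+4)/5!$, and since $a'\mapsto \dim\pi^{G_{2,c}}_{a'}$ is strictly increasing on $\mathbb{Z}_{\ge 0}$, we conclude $\sigma(a)=a$. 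The eigenvalue identity is then immediate from \eqref{eq:LR} and \eqref{eq:LG2}: $\pi^{O(7)}_a(\Omega_{O(7)}) = a^2+5a = (3/2)\cdot 2(a^2+5a)/3 = \pi^{G_{2,c}}_a(3\Omega_{G_{2,c}}/2)$, and in particular this pins down $\alpha = 3/2$ a posteriori.

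The main obstacle is the irreducibility of the $SU(3)$-isotropy action on the 6-dimensional tangent space to $G_{2,c}/SU(3)$, which underlies the uniqueness of the $G_{2,c}$-invariant metric. This rests on the description of $SU(3)\subset G_{2,c}$ recalled in Section \ref{sec:G2} as the subgroup generated by a maximal torus and the long-root $SU(2)$'s: the complementary $SU(3)$-module has weights equal to the six short roots of $G_2$ (which form a single $SU(3)$-Weyl orbit), and is naturally identified with the standard 3-dimensional complex representation $\mathbb{C}^3$. Once this is in hand, the rest of the argument is a short combination of uniqueness of the invariant metric, proportionality of the Laplacians, and dimension comparison.
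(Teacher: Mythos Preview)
Your argument is correct and takes a genuinely different route from the paper. The paper's one-line proof uses the equality of dimensions noted at \eqref{eq:G2dim} together with a direct highest-weight comparison: since the inclusion $G_{2,c}\subset O(7)$ is via the $7$-dimensional representation, the highest weight $a\cdot e_1$ of $\pi^{O(7)}_a$ (with $e_1$ a short root of $B_3$) restricts on the $G_2$ Cartan to $a$ times a short root of $G_2$, which is exactly the highest weight of $\pi^{G_{2,c}}_a$; hence $\pi^{G_{2,c}}_a$ occurs in the restriction, and dimension equality finishes.

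Your approach instead exploits the isotropy-irreducibility of $G_{2,c}/SU(3)$ to conclude that the $O(7)$- and $G_{2,c}$-invariant metrics on $S^6$ are proportional, hence so are $L_O$ and $L_G$; then the two multiplicity-free spectral decompositions of $L^2(S^6)$ (each with distinct eigenvalues) must agree eigenspace by eigenspace, and the strictly increasing dimension formula pins down $\sigma=\mathrm{id}$. This is more geometric and mirrors the template the paper uses in Sections~\ref{sec:C}--\ref{sec:O}, where the metrics are \emph{not} proportional and one picks up correction terms such as $L_{U(1)}$ or $L_{Sp(1)}$; here isotropy irreducibility forces the correction to vanish. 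The paper's approach, by contrast, bypasses any discussion of metrics and works purely with weights. One small correction to your sketch: the six short roots of $G_2$ form \emph{two} $W(SU(3))=S_3$ orbits (the weights of $\mathbb{C}^3$ and those of $(\mathbb{C}^3)^*$), not one; this does not affect your conclusion, since the real isotropy representation is still $\mathbb{C}^3$ regarded as a $6$-dimensional real $SU(3)$-module, and that is indeed irreducible.
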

This is a consequence of the equality of dimensions observed at
\eqref{eq:G2dim}, together with the fact that the inclusion of
$G_{2,c}$ in $O(7)$ carries (some) short roots to (some) short roots.

\section{The big $G_2$ calculation}
\label{sec:bigG2}
\setcounter{equation}{0}

\begin{subequations} \label{se:bigG2sphere}
Suppose $n$ is an integer at least two. The group $\Spin(2n)$, or
equivalently the Lie algebra ${\mathfrak s}{\mathfrak p}{\mathfrak
  i}{\mathfrak n}(2n)$,  has an interesting
outer automorphism of order two: conjugation by the orthogonal matrix
\begin{equation}
\sigma = \Ad  \begin{pmatrix} 1 & 0 & \cdots & 0 & 0\\ 0 & 1 &
  \cdots & 0 & 0 \\ & &
  \ddots & & \\ 0 & 0 & \cdots & 1 & 0 \\ 0 & 0 & \cdots & 0 & -1
  \end{pmatrix}.
\end{equation}
The group of fixed points of $\sigma$ is the ``first $2n-1$ coordinates''
\begin{equation}
  \Spin(2n-1) = \Spin(2n)^{\sigma}.
\end{equation}
The automorphism $\sigma$ implements the automorphism of the Dynkin diagram
\setlength{\unitlength}{1cm}

\medskip
\begin{picture}(2.8,1)(-2.7,.2)
\multiput(-.1,.5)(.7,0){3}{\circle*{.15}}
\multiput(0,.5)(.7,0){2}{\line(1,0){.5}}
\put(1.45,.48){\small{\dots}}

\put(2.0,.5){\circle*{.15}}
\multiput(2.4,-.01)(0,1.03){2}{\circle*{.15}}
\put(2.07,.57){\line(2,3){.25}}
\put(2.07,.43){\line(2,-3){.25}}
\put(2.7,.22){\begin{rotate}{90}$\longleftrightarrow$\end{rotate}}

\end{picture}

\bigskip
\noindent exchanging the two short legs. If $n=4$, the Dynkin diagram

\medskip
\setlength{\unitlength}{1cm}
\begin{picture}(2.8,1)(-2,.2)
\multiput(1.3,.5)(.5,0){1}{\circle*{.15}}
\put(1.4,.5){\line(1,0){.5}} 
\put(2.0,.5){\circle*{.15}}
\multiput(2.4,-.01)(0,1.03){2}{\circle*{.15}}
\put(2.07,.57){\line(2,3){.25}}
\put(2.07,.43){\line(2,-3){.25}}
\end{picture}

\bigskip
\noindent has two additional involutive automorphisms, exchanging the other two
pairs of legs. This gives rise to two additional (nonconjugate)
automorphisms $\sigma'$ and $\sigma''$ of $\Spin(8)$. Their fixed
point groups are isomorphic to $\Spin(7)$, but not conjugate to the
standard one (or to each other). We call them
\begin{equation}
  \Spin(8)^{\sigma'} = \Spin(7)', \qquad \Spin(8)^{\sigma''} = \Spin(7)''.
\end{equation}
The full automorphism group of the Dynkin diagram is the symmetric
group $S_3$; $\sigma_0$ and $\sigma_\pm$ are the three
transpositions, any two of which generate $S_3$. The fixed point group
of the full $S_3$ is
\begin{equation}
  \Spin(8)^{S_3} = G_{2,c} = \Spin(7) \cap \Spin(7)';
\end{equation}
this is a classical way to construct $G_{2,c}$. It follows that
\begin{equation}
S^7 = \Spin(8)/\Spin(7) \supset \Spin(7)'/G_{2,c}.
\end{equation}
Because the last homogeneous space is also seven-dimensional, the
inclusion is an equality
\begin{equation}
  S^7 = \Spin(7)'/G_{2.c}.
\end{equation}
\end{subequations} 

\begin{subequations}\label{se:bigG2reps}
Here is the representation theory. We take for our maximal torus in
$\Spin(7)'$ the double cover $T_+$ of
\begin{equation}
  SO(2)^3 \subset SO(7).
\end{equation}
The weights of $T_+$ are
\begin{equation}
X^*(T) = \{\lambda=(\lambda_1,\lambda_2,\lambda_3) \mid \lambda_i \in
{\mathbb Z} \text{\ (all $i$) or \  } \lambda_i \in
{\mathbb Z}+1/2 \text{\ (all $i$)}\}.
\end{equation}
For each integer $a\ge 0$ there is an irreducible representation
\begin{equation}\begin{aligned}
    \pi^{\Spin(7)'}_{a} &\text{\ highest wt\ }
    (a/2,a/2,a/2), \\
 \dim \pi^{\Spin(7)'}_{a} &=  \frac{(a+3)\prod_{j=1}^5(a+j)}{3\cdot 5!}
\end{aligned}\end{equation}
Notice that the polynomial giving the dimension has degree $6$; in
fact it is exactly the polynomial \eqref{eq:Rdim} giving the dimension of
$\pi_a^{O(8)}$.

Using the calculation of $\rho$ given in \eqref{eq:spinrho} below (or
in \eqref{eq:Orho})  we find
\begin{equation}\label{eq:bigG2inflchar}
\hbox{infl.~char.}(\pi^{\Spin(7)'}_a) =
((a+5)/2,(a+3)/2,(a+1)/2).
\end{equation}

The key fact is that
\begin{equation}\label{e:bigG2key}
  \pi_{a}^{O(8)}|_{\Spin(7)'} = \pi_a^{\Spin(7)'}.
\end{equation}
Therefore
\begin{equation}
  L^2(S^{7}) \simeq \sum_{a\ge 0} \pi^{\Spin(7)'}_{a}
\end{equation}
as representations of $\Spin(7)'$.

\end{subequations}

\begin{subequations}\label{se:bigG2orbit}
Here is the orbit method perspective. Define
\begin{equation}
  a_{\text{orbit}} = a+3.
\end{equation}
Then it turns out that there is a $7\times 7$ skew-symmetric real
matrix $\lambda(a_{\text{orbit}})$ (which we will
not attempt to write down) with the properties

\smallskip
\begin{equation} \begin{aligned}
\lambda(a_{\text{orbit}}) &\in ({\mathfrak g}_0/{\mathfrak
  h}_0)^*\\ \lambda(a_{\text{orbit}}) &\quad\text{has eigenvalues}\ \pm
a_{\text{orbit}}/4 \ \text{(three times)}.
  \end{aligned}
\end{equation}

\smallskip

Consequently
\begin{equation}
  \pi_{a}^{\Spin(7)'} = \pi(\text{orbit\ }
  \lambda(a_{\text{orbit}})).
\end{equation}
An aspect of the orbit method perspective is that the ``natural''
dominance condition is no longer $a \ge 0$ and but rather
\begin{equation}
  a_{\text{orbit}}  > 0 \iff a > -3.
\end{equation}
For the compact group $G_{2,c}$ we have
\begin{equation}
\pi(\text{orbit\ } \lambda(a_{\text{orbit}}))=0
\quad \text{if}\quad 0 > a > -3;
\end{equation}
so the difference is not important. But matters will be more
interesting in the noncompact case (Section \ref{sec:bigncG2}).
\end{subequations} 

\begin{subequations}\label{se:bigG2spec}
  Now we're ready for spectral theory.   We need to calculate
  $\pi_{a}(\Omega_{\Spin(7)'})$. The sum of the positive roots is
  \begin{equation}\label{eq:spinrho}
    2\rho(\Spin(7)') = (5,3,1).
  \end{equation}
Because our highest weight is
  \begin{equation}
    \lambda =  (a/2,a/2,a/2)
  \end{equation}
  we find
  \begin{equation}\label{eq:LbigG2}\begin{aligned}
    \pi^{\Spin(7)'}_a(\Omega_{\Spin(7)'}) &= 3a^2/4 + 9a/2 =
    3(a^2 +6a)/4\\ &= (3/4)(a_{\text{orbit}}^2 - 9)\\ &=
    (3/4)\pi^{O(8)}_a(\Omega_{O(8)}). 
\end{aligned}\end{equation}

\end{subequations} 

Here is a summary.
\begin{theorem}
Using the inclusion $\Spin(7)'\subset O(8)$, we have
  $$\pi^{O(8)}_a|_{\Spin(7)'} = \pi^{\Spin(7)'}_a.$$
The contribution of these representations to the spectrum of the
$O(8)$-invariant Laplacian $L_O$ is
$$\begin{aligned}\pi^{O(8)}_a(\Omega_{O(8)}) &= a^2 + 6a \\
  &= \pi^{\Spin(7)'}_a(4\Omega_{\Spin(7)'}/3).\end{aligned}$$
\end{theorem}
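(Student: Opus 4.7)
The theorem splits into the branching statement $\pi^{O(8)}_a|_{\Spin(7)'}=\pi^{\Spin(7)'}_a$, already recorded as the key fact \eqref{e:bigG2key}, and a Casimir comparison that will follow arithmetically once the branching is in hand.

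My plan for the branching is to exhibit $\pi^{\Spin(7)'}_a$ as a subrepresentation of $\pi^{O(8)}_a|_{\Spin(7)'}$ and then match dimensions. By the harmonic polynomial realization \eqref{eq:harmonic}, $\pi^{O(8)}_a$ is the space of degree-$a$ spherical harmonics on $\mathbb{R}^8$. The inclusion $\Spin(7)'\hookrightarrow O(8)$ from \eqref{se:Osphere} identifies $\mathbb{R}^8$ with the real $8$-dimensional spin representation $\sigma$ of $\Spin(7)'$, whose $T_+$-weights are the eight triples $(\pm 1/2,\pm 1/2,\pm 1/2)$. Under the $\Spin(7)'$-invariant symmetric form (the one defining the embedding into $O(8)$) the highest weight line pairs nontrivially only with the lowest weight line, so it is totally isotropic. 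Hence the $a$-th power $v^a$ of any highest weight vector $v\in\sigma$ is a harmonic polynomial on $\sigma$ of $T_+$-weight $(a/2,a/2,a/2)$, and it generates a copy of $\pi^{\Spin(7)'}_a$ inside $\pi^{O(8)}_a|_{\Spin(7)'}$.

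To close the argument I invoke the coincidence observed immediately after \eqref{eq:bigG2inflchar}: the dimension polynomial in $a$ furnished by \eqref{eq:Rdim} with $n=8$ agrees identically with the formula for $\dim\pi^{\Spin(7)'}_a$ given in \eqref{se:bigG2reps}. Equality of dimensions forces the subrepresentation inclusion above to be an equality, proving the branching. For the Casimir identity, Theorem \ref{thm:Rspec} with $n=8$ gives $\pi^{O(8)}_a(\Omega_{O(8)})=a^2+6a$, while \eqref{eq:LbigG2} gives $\pi^{\Spin(7)'}_a(\Omega_{\Spin(7)'})=(3/4)(a^2+6a)$; clearing the factor of $3/4$ yields $\pi^{O(8)}_a(\Omega_{O(8)})=\pi^{\Spin(7)'}_a(4\Omega_{\Spin(7)'}/3)$ as claimed.

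The only real delicacy is the identification of which quadratic form on $\sigma$ pins down the inclusion $\Spin(7)'\hookrightarrow O(8)$, since one must be certain that ``harmonic polynomial'' refers to the same object on both sides of the branching; once this point is settled, the highest weight observation and the matching-dimensions step are routine. An alternative, perhaps more conceptual, route would be to invoke triality: applying the outer automorphism $\sigma'$ of $\Spin(8)$ from \eqref{se:bigG2sphere} carries the pair $(\Spin(8)\supset\Spin(7))$ to $(\Spin(8)\supset\Spin(7)')$ and carries the vector highest weight $(a,0,0,0)$ to the spinor highest weight $(a/2,a/2,a/2,a/2)$; the classical $D_4\downarrow B_3$ branching for the latter collapses (all intermediate inequalities are forced to equalities), giving the same irreducible restriction.
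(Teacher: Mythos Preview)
Your proposal is correct and follows essentially the same approach as the paper, which treats the theorem as a summary of the preceding computations: the dimension coincidence noted after the formula for $\dim\pi^{\Spin(7)'}_a$ together with the Casimir calculation \eqref{eq:LbigG2}. Your explicit isotropy argument for the highest weight line of the spin representation supplies the detail the paper leaves implicit in asserting \eqref{e:bigG2key}, and your triality alternative is exactly the mechanism the paper invokes in the closely related context of \eqref{eq:87'}.
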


\section{Invariant differential operators}
\label{sec:invt}
\setcounter{equation}{0}

\begin{subequations}\label{se:invt}

  Suppose $H\subset G$ is a closed subgroup of a Lie group $G$. Write
  \begin{equation}
    {\mathbb D}(G/H) = \text{$G$-invariant differential operators on
      $G/H$},
  \end{equation}
an algebra. Following for example Helgason \cite{GGA}*{pages
  274--275}, we wish to understand this algebra and its spectral
theory as a way to understand functions on $G/H$. A first step is to
describe the algebra in terms of the Lie algebras of $G$ and $H$. This
is done in \cite{Hinvt} when $H$ is reductive in $G$ (precisely, when
the Lie algebra ${\mathfrak h}_0$ has an $\Ad(H)$-stable complement in
${\mathfrak g}_0$). Ways to remove this hypothesis have been understood
for a long time; we follow the nice account in \cite{invt}.

Write
  \begin{equation}\begin{aligned}
    {\mathfrak g}_0 &= \Lie(G) = \text{real left-invariant vector
      fields on $G$}\\
{\mathfrak g} &= {\mathfrak g}_0\otimes_{\mathbb R}{\mathbb C} =
\text{complex left-invariant vector fields on $G$}
  \end{aligned}\end{equation}
These vector fields act on functions by differentiating ``on the
right:''
\begin{equation}
  (Xf)(g) = \frac{d}{dt}\left( f(g\exp(tX))\right)|_{t=0} \qquad (X\in
  {\mathfrak g}_0).
\end{equation}
As usual we can therefore identify the enveloping algebra
\begin{equation}
U({\mathfrak g}) = \text{left-invariant complex differential operators
  on $G$}.
\end{equation}

We can identify
\begin{equation}
  C^\infty(G/H) = \{f\in C^\infty(G) \mid f(xh) = f(x)\quad (x\in G,
  h\in H)\}.
\end{equation}

Now consider the space
\begin{equation}
  I(G/H) =_{\text{def}} \left[ U({\mathfrak g})\otimes_{U({\mathfrak
        h})}{\mathbb C} \right]^{\Ad(H)\otimes 1}
\end{equation}
Before we pass to $\Ad(H)$-invariants, we have only a left
$U({\mathfrak g})$ module: no algebra structure. But
$\Ad(H)$-invariants inherit the algebra structure from $U({\mathfrak
  g})\otimes_{\mathbb C} {\mathbb C}$; so $I(G/H)$ is an algebra. The
natural action
\begin{equation}
  U({\mathfrak g})\otimes C^\infty(G) \rightarrow C^\infty(G)
\end{equation}
(which is a left algebra action, but comes by differentiating on the
right) restricts to a left algebra action
\begin{equation}
  I(G/H) \otimes C^\infty(G/H) \rightarrow C^\infty(G/H)
\end{equation}
on the subspace $C^\infty(G/H) \subset C^\infty(G)$.

Suppose more generally that $(\tau,V_\tau)$ is a finite-dimensional (and
therefore smooth) representation of $H$. Then
\begin{equation}
  {\mathcal V}_\tau = G \times_H V_\tau
\end{equation}
is a $G$-equivariant vector bundle on $G/H$. The space of smooth sections is
\begin{equation}
  C^\infty({\mathcal V}_\tau) = \{f\in C^\infty(G,V_\tau) \mid f(xh) =
  \tau(h)^{-1} f(x) \quad (x\in G, h\in H)\}.
\end{equation}

Now consider the space
\begin{equation}
  I^\tau(G/H) = \left[ U({\mathfrak g})\otimes_{U({\mathfrak h})}\End(V_\tau)
    \right]^{(\Ad\otimes \Ad)(H)}
\end{equation}
(The group $H$ acts by automorphisms on both the algebra $U({\mathfrak
  g})$ and the algebra $\End(V_\tau)$, in the latter case by
conjugation by the operators $\tau(h)$. The $H$-invariants are taken
for the tensor product of these two actions.) Before we pass to
$\Ad(H)$-invariants, we have only a left
$U({\mathfrak g})$ module: no algebra structure. But
$\Ad(H)$-invariants inherit the algebra structure from $U({\mathfrak
  g})\otimes_{\mathbb C} \End(V_\tau)$; so $I^\tau(G/H)$ is an algebra. The
natural action
\begin{equation}
  [U({\mathfrak g})\otimes_{\mathbb C} \End(V_\tau)] \otimes
  C^\infty(G,V_\tau) \rightarrow C^\infty(G,V_\tau)
\end{equation}
(which is a left algebra action, but comes by differentiating on the
right) restricts to a left algebra action
\begin{equation}\label{eq:bundleinvt}
  I^\tau(G/H)\otimes C^\infty({\mathcal V}_\tau) \rightarrow
  C^\infty({\mathcal V}_\tau).
\end{equation}
\end{subequations} 

\begin{proposition}[Helgason \cite{Hinvt}*{Theorem
    10}; \cite{HinvtBull}*{pages 758--759}; Koornwinder
    \cite{invt}*{Theorem 2.10}]\label{prop:invt} Suppose $H$ is a
  closed subgroup of the Lie group $G$. The action \eqref{eq:bundleinvt}
  identifies the algebra $I^\tau(G/H)$ with
  $${\mathbb D}^\tau(G/H) = \text{$G$-invariant differential operators
    on the vector bundle ${\mathcal V}_\tau$}.$$

  The action of $I^\tau(G/H)$ on formal power series sections of
  ${\mathcal V}_\tau$ at the identity is a faithful action.
\end{proposition}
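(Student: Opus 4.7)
The plan is to reduce invariant differential operators on ${\mathcal V}_\tau$ to left-$G$-invariant operators on the induced picture $C^\infty(G,V_\tau)$, identify the latter with $U({\mathfrak g})\otimes\End(V_\tau)$ by right-translation, and then cut down to the ones that descend to sections of the bundle. First I would observe that the realization $C^\infty({\mathcal V}_\tau) \hookrightarrow C^\infty(G,V_\tau)$ is $G$-equivariant, so any $G$-invariant differential operator on ${\mathcal V}_\tau$ lifts uniquely to a left-$G$-invariant differential operator on $C^\infty(G,V_\tau)$ that preserves the $\tau$-equivariant subspace. The standard identification of left-$G$-invariant differential operators on $G$ with $U({\mathfrak g})$ (with $\End(V_\tau)$-valued coefficients acting pointwise on fibers) shows that every such lifted operator arises from a unique element of $U({\mathfrak g})\otimes_{\mathbb C}\End(V_\tau)$ acting by right-differentiation.

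Next I would determine which such elements descend. For $X\in{\mathfrak h}_0$ and a $\tau$-equivariant section $f$, differentiating $f(x\exp tX) = \tau(\exp tX)^{-1}f(x)$ at $t=0$ yields $(Xf)(x) = -d\tau(X)f(x)$, so $X\otimes E$ and $1\otimes(-E\,d\tau(X))$ act identically for every $E\in\End(V_\tau)$. This is precisely the relation collapsing $U({\mathfrak g})\otimes\End(V_\tau)$ to $U({\mathfrak g})\otimes_{U({\mathfrak h})}\End(V_\tau)$. Insisting in addition that $H$-translates of a $\tau$-equivariant section remain $\tau$-equivariant after applying the operator forces the resulting class to be fixed by the diagonal $(\Ad\otimes\Ad)(H)$-action, producing $I^\tau(G/H)$. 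A diagram chase identifies the action of \eqref{eq:bundleinvt} with the full algebra ${\mathbb D}^\tau(G/H)$.

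For the faithfulness statement, I would pick any vector-space complement ${\mathfrak m}$ of ${\mathfrak h}_0$ in ${\mathfrak g}_0$; the Poincar\'e--Birkhoff--Witt theorem furnishes a vector-space isomorphism $U({\mathfrak g}) \cong S({\mathfrak m})\otimes U({\mathfrak h})$, hence
\[
U({\mathfrak g})\otimes_{U({\mathfrak h})}\End(V_\tau) \;\cong\; S({\mathfrak m})\otimes\End(V_\tau)
\]
as vector spaces. Under exponential coordinates ${\mathfrak m}\to G/H$ near $eH$, a formal power series section becomes an element of $\widehat{S({\mathfrak m}^*)}\otimes V_\tau$, and a nonzero element $D=\sum u_\alpha\otimes E_\alpha$ in PBW form acts as an honest $\End(V_\tau)$-valued formal differential operator in the ${\mathfrak m}$-variables. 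For each monomial $u_\alpha$ and each vector in $V_\tau$, one can write down an explicit polynomial formal section on which the $u_\alpha$-contribution is the unique surviving term and extracts $E_\alpha$; so if the action is zero then every $E_\alpha$ vanishes.

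The main obstacle is the absence of an $\Ad(H)$-stable complement to ${\mathfrak h}_0$, which blocks Helgason's cleanest reductive-pair argument. One proceeds as above with a mere vector-space complement, exploiting the fact that faithfulness is a question about the induced action on formal sections and does not require the splitting $U({\mathfrak g}) \cong S({\mathfrak m})\otimes U({\mathfrak h})$ to be $H$-stable. A secondary subtlety is guaranteeing that the action of an $(\Ad\otimes\Ad)(H)$-invariant class preserves $\tau$-equivariance globally (not just to first order); this follows because the infinitesimal check covers the identity component, and the full $H$-invariance handles the component group.
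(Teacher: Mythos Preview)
The paper does not prove this proposition; it is quoted from Helgason and Koornwinder and then used. So there is no in-paper argument to compare against, and your sketch is essentially an outline of the cited proofs.

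Your injectivity/faithfulness paragraph is fine. The surjectivity direction, however, has a real gap. The claim that a $G$-invariant differential operator on ${\mathcal V}_\tau$ ``lifts uniquely to a left-$G$-invariant differential operator on $C^\infty(G,V_\tau)$ that preserves the $\tau$-equivariant subspace'' is not correct. Uniqueness already fails: for any $X\in{\mathfrak h}_0$ the element $X\otimes 1 + 1\otimes d\tau(X)$ of $U({\mathfrak g})\otimes\End(V_\tau)$ annihilates all $\tau$-equivariant functions but is a nonzero operator on $C^\infty(G,V_\tau)$, so the zero operator on sections has many lifts. More importantly, \emph{existence} of such a lift is exactly the nontrivial content of surjectivity, and you have asserted it rather than proved it: there is no a priori reason an operator defined only on the $\tau$-equivariant subspace extends to a left-$G$-invariant operator on all of $C^\infty(G,V_\tau)$.

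The standard repair (which is what Koornwinder does, and what your faithfulness paragraph already sets up) is to bypass lifting entirely and argue surjectivity at the level of jets. Given $D\in{\mathbb D}^\tau(G/H)$, $G$-invariance means $D$ is determined by its action on formal sections at $eH$. In your exponential coordinates from ${\mathfrak m}$ this action is a finite-order $\End(V_\tau)$-valued differential operator at the origin, hence a unique element $u\in S({\mathfrak m})\otimes\End(V_\tau)\cong U({\mathfrak g})\otimes_{U({\mathfrak h})}\End(V_\tau)$. Since $eH$ is $H$-fixed and $D$ is in particular $H$-invariant, $u$ lands in the $(\Ad\otimes\Ad)(H)$-invariants, i.e.\ in $I^\tau(G/H)$. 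The operator coming from $u$ then agrees with $D$ on formal sections at $eH$, and $G$-invariance of both propagates the agreement everywhere.
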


\begin{subequations}\label{se:Helgharm}
Helgason's idea for invariant harmonic analysis (see for example
\cite{GGA}*{Introduction}) is to understand the
spectral theory of the algebra $I(G/H)={\mathbb D}(G/H)$ on
$C^\infty(G/H)$; or, more generally, of ${\mathbb D}^\tau(G/H)$ on
smooth sections of ${\mathcal V}_\tau$. Suppose for example that
${\mathbb D}(G/H)$ is {\em abelian}, and fix an algebra homomorphism
\begin{equation}
  \lambda \colon {\mathbb D}(G/H) \rightarrow {\mathbb C}, \qquad
  \lambda \in \Max\Spec({\mathbb D}(G/H)).
\end{equation}
Then the collection of simultaneous eigenfunctions
\begin{equation}
  C^\infty(G/H)_\lambda =_{\text{def}} \{f\in C^\infty(G/H) \mid Df =
  \lambda(D)f \mid D\in {\mathbb D}(G/H)\}
\end{equation}
is naturally a representation of $G$ (by left translation). The
question is for which $\lambda$ the space $C^\infty(G/H)_\lambda$ is
nonzero; and more precisely, what representation of $G$ it carries.
We can define
\begin{equation}
  \Spec(G/H) = \{\lambda \in \Max\Spec({\mathbb D}(G/H)) \mid
  C^\infty(G/H)_\lambda \ne 0\}.
\end{equation}
All of these remarks apply equally well to vector bundles.
\end{subequations}

\begin{subequations}\label{se:constructinvts1}
How can we identify interesting or computable invariant differential
operators? The easiest way is using the center of the enveloping
algebra
\begin{equation}\label{eq:Zg}
  {\mathfrak Z}({\mathfrak g}) =_{\text{def}} U({\mathfrak g})^G.
\end{equation}
(If $G$ is disconnected, this may be a proper subalgebra of the
center.) The obvious map
\begin{equation}\label{eq:Zgi}
  i_G\colon  {\mathfrak Z}({\mathfrak g}) \rightarrow I^\tau(G/H),
  \qquad z \mapsto z\otimes I_{V_\tau}
\end{equation}
is an algebra homomorphism. Here is how the spectral theory of the
differential operators $i_G( {\mathfrak Z}({\mathfrak g}))$ is related
to representation theory. Suppose that $(\pi,E_\pi)$ is a smooth
irreducible representation of $G$. Under a variety of mild assumptions
(for example, if $G$ is reductive and $\pi$ is quasisimple) there is a
homomorphism
\begin{equation}
  \chi_\pi\colon  {\mathfrak Z}({\mathfrak g}) \rightarrow {\mathbb C}
\end{equation}
called the {\em infinitesimal character of $\pi$} so that
\begin{equation}
  d\pi(z) = \chi_\pi(z)\cdot I_{E_\pi}.
\end{equation}
Suppose now that there is a $G$-equivariant inclusion
\begin{equation}\label{eq:harmanalysis}
  j_G\colon E_\pi \rightarrow C^\infty(G/H,{\mathcal V}_\tau).
\end{equation}
Finding inclusions like \eqref{eq:harmanalysis} is one of the things
harmonic analysis is about. One reason we care about it is the
consequences for spectral theory:
\begin{equation}\label{eq:spectral1}
  i_G(z) \text{\ acts on $j_G(E_\pi) \subset C^\infty({\mathcal V}_\tau)$ by
    the scalar $\chi_\pi(z)$} \qquad (z \in  {\mathfrak Z}({\mathfrak
    g})).
  \end{equation}
\end{subequations}

\begin{subequations}\label{se:constructinvts2}
Here is a generalization. Suppose $G_1$ is a subgroup of $G$
normalized by $H$:
\begin{equation}
  G_1 \subset G, \qquad \Ad(H)(G_1) \subset G_1.
\end{equation}
(The easiest way for this to happen is for $G_1$ to contain $H$.) Then
$H$ acts on ${\mathfrak Z}({\mathfrak g}_1)$, so we get
\begin{equation}\label{eq:Zg1i}
  i_{G_1}\colon  {\mathfrak Z}({\mathfrak g}_1)^H \rightarrow I^\tau(G/H),
  \qquad z_1 \mapsto z_1\otimes I_{V_\tau}.
\end{equation}
These invariant differential operators are acting along the
submanifolds
\begin{equation}
  xG_1/(G_1\cap H) \subset G/H \qquad (x\in G)
\end{equation}
of $G/H$. An example is the first coordinate $G_1=U(1)$ introduced in
\eqref{se:Ureps}, for $H=U(n-1)$. The operator $\Omega_{U(1)}$ on
$S^{2n-1}$ (acting along the fibers of the map $S^{2n-1} \rightarrow
{\mathbb C}{\mathbb P}^{n-1}$) is one of these new invariant
operators. A more interesting example is $G_1=Sp(1)\times Sp(1)$ studied
in \eqref{e:extraHreps}.

Here is how the spectral theory of these new operators is related to
representation theory. The map \eqref{eq:harmanalysis} is (by
Frobenius reciprocity) the same thing as an $H$-equivariant map
\begin{equation}
  j_H\colon E_\pi \rightarrow V_\tau
\end{equation}
or equivalently
\begin{equation}
  j_H^*\colon V_\tau^* \rightarrow E_\pi^*.
\end{equation}
It makes sense to define
\begin{equation}
  (E_\pi^*)^{G_1,j_H} = \text{$G_1$ representation generated by
    $j_H^*(V_\tau^*)$} \subset \pi^*.
\end{equation}
If the $G_1$ representation $(\pi^*)^{G_1,j_H}$ has infinitesimal character
$\chi_1^*$ (the contragredient of the infinitesimal character $\chi_1$),
then
\begin{equation} \label{eq:spectral2}
  i_{G_1}(z_1) \text{\ acts on $j_G(E_\pi) \subset C^\infty({\mathcal V}_\tau)$ by
    the scalar $\chi_1(z_1)$} \qquad (z_1 \in  {\mathfrak Z}({\mathfrak
    g}_1)^H).
\end{equation}

The homomorphisms $i_G$ of \eqref{eq:Zgi} and \eqref{eq:Zg1i} define an
algebra homomorphism from the abstract (commutative) tensor product
algebra
\begin{equation}\label{eq:Zgprodi}
i_G\otimes i_{G_1} \colon {\mathfrak Z}({\mathfrak g}) \otimes_{\mathbb
  C}{\mathfrak Z}({\mathfrak g}_1) \rightarrow I^\tau(G/H).
\end{equation}
The reason for this is that ${\mathfrak Z}({\mathfrak g})$ commutes
with all of $U({\mathfrak g})$.
\end{subequations}

Now that we understand the relationship between representations in
$C^\infty({\mathcal V}_\tau)$ and the spectrum of invariant
differential operators, let us see what the results of Sections
\ref{sec:R}--\ref{sec:bigG2} can tell us: in particular, about the
kernel of the homomorphism $i_G\otimes i_{G_1}$ of \eqref{eq:Zgprodi}.
\begin{subequations}\label{se:Rdiff}
We begin with $G=O(n)$, $H=O(n-1)$ as in Section
\ref{sec:R}. Write $n=2m+\epsilon$, with $\epsilon=0$ or
$1$. A maximal torus in $G$ is
\begin{equation}
  T=SO(2)^m,\qquad {\mathfrak t}_0 = {\mathbb
    R}^m, \qquad {\mathfrak t} = {\mathbb C}^m.
\end{equation}
The Weyl group $W(O(n))$ acts by permutation and sign changes on
these $m$ coordinates. Harish-Chandra's theorem identifies
\begin{equation}\label{eq:HCO}
  {\mathfrak Z}({\mathfrak g}) \simeq S({\mathfrak t})^{W(O(n))} =
  {\mathbb C}[x_1,\cdots,x_m]^{W(O(n))}.
\end{equation}
Therefore
\begin{equation}
  \hbox{(maximal ideals in ${\mathfrak Z}({\mathfrak g})$})
  \leftrightarrow {\mathbb C}^m/W(O(n)).
\end{equation}
Suppose $z\in {\mathfrak Z}({\mathfrak g})$ corresponds to $p\in
{\mathbb C}[x_1,\cdots,x_m]^{W(O(n))}$ by \eqref{eq:HCO}. According to
\eqref{eq:spectral1} and \eqref{eq:Oinfchar}, the invariant
differential operator $i_G(z)$ will act on $\pi_a\subset
C^\infty(G/H)$ by the scalar
$$  p(a+(n-2)/2, (n-4)/2,\cdots,(n-2m)/2).$$
Recalling that $n-2m=\epsilon=0$ or $1$, we write this as
\begin{equation}\label{eq:Rscalar}
  p(a+(n-2)/2, (n-4)/2,\cdots,\epsilon/2).
\end{equation}
\end{subequations}

Here is the consequence we want.
\begin{proposition}\label{prop:Rdiff} With notation as above,
  the polynomial
  $$p\in {\mathbb C}[x_1,\cdots,x_m]^{W(O(n))}$$
  vanishes on the (affine) line
  $$\{(\alpha,(n-4)/2,\cdots,\epsilon/2) \mid \alpha\in {\mathbb C}\}.$$
  if and only if $i_G(z)\in I(G/H)$ is equal to zero.
\end{proposition}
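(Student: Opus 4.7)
The plan is to reduce Proposition~\ref{prop:Rdiff} directly to the spectral data already established. From \eqref{eq:Osphere} and \eqref{eq:Rscalar}, the invariant operator $i_G(z) \in I(G/H)$ acts on the summand $\pi_a^{O(n)}\subset C^\infty(S^{n-1})$ by the scalar
\[
  c(a) \;=\; p\bigl(a+(n-2)/2,\;(n-4)/2,\;\ldots,\;\epsilon/2\bigr),\qquad a\in{\mathbb Z}_{\ge 0}.
\]
The two conditions to compare are thus (i) $c(a)=0$ for every $a\ge 0$, and (ii) $i_G(z)=0$ in $I(G/H)$; the proposition asserts that both are equivalent to the polynomial condition that $p$ vanishes on the line $\{(\alpha,(n-4)/2,\ldots,\epsilon/2):\alpha\in{\mathbb C}\}$.

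First I would handle the purely polynomial step. The univariate polynomial $\alpha\mapsto p(\alpha,(n-4)/2,\ldots,\epsilon/2)$ vanishes on the infinite set $\{a+(n-2)/2:a\in{\mathbb Z}_{\ge 0}\}$ if and only if it is the zero polynomial, and the latter is exactly the statement that $p$ vanishes on the whole affine line. This identifies (i) with the polynomial condition in the proposition, independently of the $W(O(n))$-invariance of $p$.

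Next I would pass between (i) and (ii). The implication (ii)$\Rightarrow$(i) is immediate: if $i_G(z)=0$ in $I(G/H)$, then in particular it acts trivially on every $\pi_a^{O(n)}\subset C^\infty(S^{n-1})$, so $c(a)=0$. Conversely, assuming (i), the operator $i_G(z)$ annihilates the algebraic sum $\bigoplus_{a\ge 0}\pi_a^{O(n)}$, which by Peter--Weyl together with the decomposition \eqref{eq:Osphere} is dense in $C^\infty(S^{n-1})$ for the $C^\infty$-topology. Any $G$-invariant differential operator is continuous in that topology, so $i_G(z)$ is the zero operator on $C^\infty(S^{n-1})$. The faithfulness clause of Proposition~\ref{prop:invt} (applied with $V_\tau={\mathbb C}$) then upgrades this to $i_G(z)=0$ in $I(G/H)$.

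I expect the main (though still mild) obstacle to be the last step (i)$\Rightarrow$(ii): the polynomial half is elementary and the scalar formula \eqref{eq:Rscalar} does the rest, but lifting ``$i_G(z)$ kills every $\pi_a^{O(n)}$'' to ``$i_G(z)=0$ in $I(G/H)$'' genuinely uses both the density of $G$-finite vectors in $C^\infty(G/H)$ and the faithfulness assertion in Proposition~\ref{prop:invt}. No nontrivial Harish-Chandra-type manipulation with ${\mathfrak Z}({\mathfrak g})$ should be needed.
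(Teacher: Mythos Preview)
Your proposal is correct and follows essentially the same approach as the paper's own proof: both directions use the Zariski density of the integer points $\{a+(n-2)/2:a\ge 0\}$ on the line, the density of $\bigoplus_a \pi_a^{O(n)}$ in $C^\infty(S^{n-1})$, and the faithfulness clause of Proposition~\ref{prop:invt}. The only difference is organizational: you factor the argument through the intermediate condition ``$c(a)=0$ for all $a$'' and spell out continuity of differential operators in the $C^\infty$ topology, whereas the paper treats the two implications more tersely.
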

\begin{proof} The statement ``if'' is a consequence of
  \eqref{eq:Rscalar}: if the differential operator is zero, then $p$
  must vanish at all the points $(a+(n-2)/2, (n-4)/2,\cdots)$ with $a$
  a non-negative integer. These points are Zariski dense in the
  line. For ``only if,'' the vanishing of the polynomial makes the
  differential operator act by zero on all the subspaces $\pi_a\subset
  C^\infty(G/H)$. The sum of these subspaces is dense (for example as
  a consequence of \eqref{eq:Osphere}); so the differential operator
  acts by zero. The faithfulness statement in Proposition
  \ref{prop:invt} then implies that $i_G(z)=0$. \end{proof}
\begin{corollary}\label{cor:Rdiff}
\addtocounter{equation}{-1}
\begin{subequations}
  The $O(n)$ infinitesimal characters
  factoring to $i_G({\mathfrak Z}({\mathfrak g}))$ are indexed by
  weights
\begin{equation}\label{eq:Rinfchar}
  (\alpha,(n-4)/2,\cdots,\epsilon/2) \qquad (\alpha \in {\mathbb C}).
\end{equation}

  Suppose $(\pi,E_\pi)$ is a representation of ${\mathfrak
    o}(n,{\mathbb C})$ having an infinitesimal character, and that
  $(E_\pi^*)^{{\mathfrak o}(n-1,{\mathbb C})} \ne 0$. Then $\pi$ has
  infinitesimal character of the form \eqref{eq:Rinfchar}.
  \end{subequations}
\end{corollary}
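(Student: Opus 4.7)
\begin{proof_sketch}
The plan is to combine Proposition \ref{prop:Rdiff} with the elementary identity
\[\ker(i_G) = {\mathfrak Z}({\mathfrak g})\cap U({\mathfrak g}){\mathfrak h},\]
valid because $I(G/H)$ is constructed from $U({\mathfrak g})/U({\mathfrak g}){\mathfrak h}$ and central elements are automatically $\Ad(H)$-invariant, so $i_G$ is just reduction modulo this left ideal.

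For part (a): a character $\chi_\lambda$ of ${\mathfrak Z}({\mathfrak g})$ factors through $i_G({\mathfrak Z}({\mathfrak g}))$ iff it vanishes on $\ker(i_G)$. Under Harish-Chandra's isomorphism \eqref{eq:HCO}, $\chi_\lambda$ is evaluation of $W(O(n))$-symmetric polynomials at $\lambda\in{\mathbb C}^m$, and Proposition \ref{prop:Rdiff} identifies the image of $\ker(i_G)$ as exactly the polynomials vanishing on the line $L=\{(\alpha,(n-4)/2,\dots,\epsilon/2):\alpha\in{\mathbb C}\}$. By $W$-invariance, vanishing at $\lambda$ is equivalent to $W\cdot\lambda$ meeting $L$, which is precisely \eqref{eq:Rinfchar}.

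For part (b): take a nonzero $v^*\in(E_\pi^*)^{\mathfrak h}$ and show any $z\in\ker(i_G)$ kills it. Writing $z=\sum_i u_i X_i$ with $X_i\in{\mathfrak h}$, the contragredient action $(z\cdot v^*)(w)=v^*(S(z)w)$ (where $S$ denotes the antipode on $U({\mathfrak g})$) reduces the claim to the fact that $v^*$ annihilates ${\mathfrak h}\cdot E_\pi$. But $z$ is central, so $z\cdot v^*=\chi_{\pi^*}(z)v^*$, forcing $\chi_{\pi^*}(z)=0$. Hence $\chi_{\pi^*}$ factors through $i_G$, and part (a) shows $\pi^*$ has infinitesimal character of the form \eqref{eq:Rinfchar}; transfer to $\pi$ is automatic because $W(O(n))$ contains $-I$ (all sign changes are present for the full orthogonal group, regardless of the parity of $n$), so the parameter set \eqref{eq:Rinfchar} is closed under negation of its Weyl orbits.

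The only subtle point --- the step I would double-check most carefully --- is the antipode bookkeeping in part (b): verifying that the \emph{left} ideal $U({\mathfrak g}){\mathfrak h}$ genuinely acts by zero on ${\mathfrak h}$-fixed covectors of $E_\pi^*$, and then that the passage from $\pi^*$ back to $\pi$ does not disturb the shape of \eqref{eq:Rinfchar}. Everything else is routine transport via the Harish-Chandra isomorphism and the line description of $\ker(i_G)$ supplied by Proposition \ref{prop:Rdiff}.
\end{proof_sketch}
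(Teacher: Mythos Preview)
Your proof is correct and is essentially the argument the paper leaves implicit: the Corollary is stated without proof as a direct consequence of Proposition~\ref{prop:Rdiff}, and your identification $\ker(i_G)=\mathfrak{Z}(\mathfrak{g})\cap U(\mathfrak{g})\mathfrak{h}$ together with the antipode computation (which does send $U(\mathfrak{g})\mathfrak{h}$ to $\mathfrak{h}\,U(\mathfrak{g})$, so the $\mathfrak{h}$-fixed covector is genuinely annihilated) is the natural way to extract both parts. The passage from $\pi^*$ back to $\pi$ is fine for the reason you give.
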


Exactly the same arguments apply to the other examples treated in
Sections \ref{sec:R}--\ref{sec:bigG2}. We will just state the
conclusions.

\begin{subequations}\label{se:Cdiff}
Suppose $G=U(n)$, $H=U(n-1)$ as in Section
\ref{sec:C}. A maximal torus in $G$ is
\begin{equation}
  T=U(1)^n,\qquad {\mathfrak t}_0 = {\mathbb
    R}^n, \qquad {\mathfrak t} = {\mathbb C}^n.
\end{equation}
The Weyl group $W(U(n))$ acts by permutation on
these $n$ coordinates. Harish-Chandra's theorem identifies
\begin{equation}\label{eq:HCU}
  {\mathfrak Z}({\mathfrak g}) \simeq S({\mathfrak t})^{W(U(n))} =
  {\mathbb C}[x_1,\cdots,x_n]^{W(U(n))}.
\end{equation}
Therefore
\begin{equation}
  \hbox{(maximal ideals in ${\mathfrak Z}({\mathfrak g})$})
  \leftrightarrow {\mathbb C}^n/W(U(n)).
\end{equation}
Suppose $z\in {\mathfrak Z}({\mathfrak g})$ corresponds to $p\in
{\mathbb C}[x_1,\cdots,x_n]^{W(U(n))}$ by \eqref{eq:HCU}. According to
\eqref{eq:spectral1} and \eqref{eq:Uinflchar}, the invariant
differential operator $i_G(z)$ will act on $\pi_{b,c}\subset
C^\infty(G/H)$ by the scalar
\begin{equation}\label{eq:Cscalar}
  p((b+(n-1))/2, (n-3)/2,\cdots,-(n-3)/2,-(c+(n-1))/2).
\end{equation}
\end{subequations}

\begin{proposition}\label{prop:Cdiff} With notation as above,
  the polynomial
  $$p\in {\mathbb C}[x_1,\cdots,x_n]^{W(U(n))}$$
  vanishes on the (affine) plane
  $$\{(\xi,(n-3)/2,\cdots,-(n-3)/2,-\tau) \mid (\xi,\tau)\in {\mathbb C}^2\}.$$
  if and only if $i_G(z)\in I(G/H)$ is equal to zero.
\end{proposition}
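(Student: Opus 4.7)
\begin{proof_sketch}
The plan is to imitate exactly the proof of Proposition \ref{prop:Rdiff}, substituting the two-parameter family $\pi^{U(n)}_{b,c}$ appearing in $L^2(S^{2n-1})$ for the one-parameter family $\pi^{O(n)}_a$ in $L^2(S^{n-1})$. The three inputs are the scalar formula \eqref{eq:Cscalar} (recording how $i_G(z)$ acts on each $\pi^{U(n)}_{b,c}$); the branching and Plancherel decomposition $L^2(S^{2n-1}) \simeq \bigoplus_{b,c \ge 0} \pi^{U(n)}_{b,c}$ following from \eqref{e:Ckey}; and the faithfulness assertion of Proposition \ref{prop:invt}.

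For the direction ``$i_G(z)=0 \Rightarrow p$ vanishes on the plane'', I would argue that if $i_G(z)$ is the zero operator then in particular it annihilates every summand $\pi^{U(n)}_{b,c} \subset C^\infty(G/H)$, and so by \eqref{eq:Cscalar} the polynomial $p$ vanishes at the point of the plane with $\xi = b + (n-1)/2$ and $\tau = c + (n-1)/2$, for all $b,c \in \mathbb{Z}_{\ge 0}$. These evaluation points project onto a translate of $\mathbb{Z}_{\ge 0}^{2}$ in the $(\xi,\tau)$ coordinates of the plane, which is Zariski dense in $\mathbb{C}^{2}$; hence $p$ vanishes identically on the plane.

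For the converse, suppose $p$ vanishes on the plane. Then \eqref{eq:Cscalar} shows that $i_G(z)$ acts by the scalar $0$ on each $\pi^{U(n)}_{b,c}$, and hence on their algebraic sum, which is dense in $C^\infty(S^{2n-1})$. Therefore $i_G(z)$ annihilates a dense subspace of smooth functions on $G/H$, and in particular kills every formal power-series expansion at the identity coset. The faithfulness half of Proposition \ref{prop:invt} then yields $i_G(z) = 0$.

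The only real difference with the argument of Proposition \ref{prop:Rdiff} is that the Zariski-density step now takes place in two variables rather than one: instead of density of an arithmetic progression on a line we need density of a planar lattice. Since $b$ and $c$ vary independently, this is immediate, and I do not expect any substantial obstacle beyond carefully tracking both parameters.
\end{proof_sketch}
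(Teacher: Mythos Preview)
Your proof is correct and follows exactly the approach the paper intends: after proving Proposition~\ref{prop:Rdiff}, the paper simply remarks that ``exactly the same arguments apply to the other examples'' and states Proposition~\ref{prop:Cdiff} without a separate proof. The only new ingredient you identify---Zariski density of the lattice $\{(b+(n-1)/2,\,c+(n-1)/2)\mid b,c\in\mathbb{Z}_{\ge 0}\}$ in the $(\xi,\tau)$-plane rather than of an arithmetic progression on a line---is precisely the adaptation required, and it is as immediate as you say.
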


\begin{corollary}\label{cor:Cdiff}
\addtocounter{equation}{-1}
\begin{subequations}
  The $U(n)$ infinitesimal characters
  factoring to $i_G({\mathfrak Z}({\mathfrak g}))$ are indexed by
  weights
  \begin{equation}\label{eq:Cinfchar}
    (\xi,(n-3)/2,\cdots,-(n-3)/2,-\tau) \qquad ((\xi,\tau) \in
    {\mathbb C}^2).
  \end{equation}

  Suppose $(\gamma,F_\gamma)$ is a representation of ${\mathfrak
    u}(n,{\mathbb C})$ having an infinitesimal character, and that
  $(F_\gamma^*)^{{\mathfrak u}(n-1,{\mathbb C})} \ne 0$. Then
  $F_\gamma$ has infinitesimal character of the form \eqref{eq:Cinfchar}. The
  parameters $\xi$ and $\tau$ may be determined as follows. The
  central character of $\gamma$ (scalars by which the one-dimensional center of
  the Lie algebra acts) is given by $\xi-\tau$. If in addition
  $F_\gamma \subset E_\pi$ for
  some representation $(\pi,E_\pi)$ of ${\mathfrak o}(2n,{\mathbb C})$
  as in Corollary \ref{cor:Rdiff}, then we may take $\xi+\tau =
  \alpha$. (Replacing $\alpha$ by the equivalent infinitesimal
  character parameter $-\alpha$
  has the effect of interchanging $\xi$ and $-\tau$, which defines an
  equivalent infinitesimal character parameter.)
  \end{subequations}
\end{corollary}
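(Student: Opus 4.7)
\begin{proof_sketch}
The plan is to imitate the proof of Corollary~\ref{cor:Rdiff}.  For the first claim in~(a), I appeal directly to Proposition~\ref{prop:Cdiff}: characters of $i_G(\mathfrak{Z}(\mathfrak{g}))$ correspond to $W(U(n))$-orbits in $\mathbb{C}^n$ on which every polynomial in $\ker(i_G)$ vanishes, and Proposition~\ref{prop:Cdiff} identifies these as precisely the orbits of the weights in~\eqref{eq:Cinfchar}.

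For the main claim in~(b), the key is an algebraic argument: fix a nonzero $v^*\in(F_\gamma^*)^{\mathfrak{u}(n-1,\mathbb{C})}$ and show that every $z\in\ker(i_G)$ annihilates $v^*$.  Since $I(G/H)=(U(\mathfrak{g})\otimes_{U(\mathfrak{h})}\mathbb{C})^{\Ad(H)}$ and $i_G(z)=z\otimes 1$, the condition $i_G(z)=0$ forces $z\in U(\mathfrak{g})\mathfrak{h}$; so $z=\sum u_i X_i$ with $X_i\in\mathfrak{h}=\mathfrak{u}(n-1,\mathbb{C})$, and by associativity $z\cdot v^*=\sum u_i(X_i v^*)=0$ because $v^*$ is $\mathfrak{h}$-fixed.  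But $z$ is central, so it acts on $v^*$ by the scalar $\chi_{\gamma^*}(z)$; hence $\chi_{\gamma^*}(z)=0$.  Thus $\chi_{\gamma^*}$ factors through $i_G(\mathfrak{Z}(\mathfrak{g}))$ and by~(a) takes the form~\eqref{eq:Cinfchar}; the set of such parameters is preserved under the swap $(\xi,\tau)\mapsto(\tau,\xi)$ induced on parameters by passing to the contragredient, so the same conclusion holds for $\chi_\gamma$ itself.

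The central-character identification reduces to a direct computation.  The one-dimensional center of $\mathfrak{u}(n,\mathbb{C})$ is spanned by the scalar matrix, which acts on a weight vector of weight $(\mu_1,\ldots,\mu_n)$ by $\sum\mu_i$.  Because the half-sum of positive roots $\rho=((n-1)/2,(n-3)/2,\ldots,-(n-1)/2)$ has coordinate sum zero, this scalar is read off from the Harish-Chandra parameter; and in $(\xi,(n-3)/2,\ldots,-(n-3)/2,-\tau)$ the middle $n-2$ entries form an arithmetic progression symmetric about $0$, so the total sum is $\xi-\tau$.  For the relation $\xi+\tau=\alpha$ under an embedding $F_\gamma\subset E_\pi$, Theorem~\ref{thm:OUcptbranch} gives $\pi^{O(2n)}_a|_{U(n)}=\bigoplus_{b+c=a}\pi^{U(n)}_{b,c}$, so $a=b+c$, which in orbit parameters reads $\alpha=a+(n-1)=(b+(n-1)/2)+(c+(n-1)/2)=\xi+\tau$.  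The parenthetical remark is the observation that the $W(O(2n))$-equivalence $\alpha\leftrightarrow-\alpha$ corresponds under this identification to the $W(U(n))$-equivalence $(\xi,\tau)\leftrightarrow(-\tau,-\xi)$.

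The only step with any real content is the translation ``$i_G(z)=0$ if and only if $z\in U(\mathfrak{g})\mathfrak{h}$,'' together with the observation that $U(\mathfrak{g})\mathfrak{h}$ annihilates $\mathfrak{h}$-fixed vectors.  Everything else is bookkeeping with the explicit parameters and the branching theorem already proved.
\end{proof_sketch}
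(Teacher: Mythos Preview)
Your argument for the first two assertions is correct and is exactly the natural way to make explicit what the paper leaves implicit (the paper simply says ``exactly the same arguments apply'' after Corollary~\ref{cor:Rdiff}). The key step---that $i_G(z)=0$ forces $z\in U(\mathfrak g)\mathfrak h$, hence $z$ annihilates any $\mathfrak h$-fixed vector, so $\chi_{\gamma^*}$ factors through $i_G(\mathfrak Z(\mathfrak g))$---is precisely the content the paper is pointing to, and your handling of the contragredient and the central-character computation is fine.

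There is one genuine gap. Your argument for $\xi+\tau=\alpha$ invokes only the compact branching Theorem~\ref{thm:OUcptbranch}, so it establishes the relation only when $F_\gamma$ and $E_\pi$ are finite-dimensional. The corollary, however, is stated for arbitrary modules over the complexified Lie algebras, and is used in exactly that generality in Section~\ref{sec:Upq} (to pin down the infinitesimal characters of the noncompact representations $\pi^{U(p,q)}_{x,y}\subset\pi^{O(2p,2q)}_\ell$). The paper does not spell out a proof of the general case either. The intended mechanism is the operator identity $L_O=2L_U-L_{U(1)}$ of~\eqref{eq:LOU}, regarded as an equality in the algebra $I(U(n)/U(n-1))$ (purely algebraic, hence valid for every real form by~\eqref{eq:realformisom}), together with the observation---which follows from your own computation---that on any $\mathfrak u(n-1)$-fixed vector the first-coordinate $U(1)$ weight equals the central character $\xi-\tau$, so that $2\Omega_{U(n)}-\Omega_{U(1)}$ acts there by $(\xi+\tau)^2-(n-1)^2$. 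Matching this against the $\Omega_{O(2n)}$-eigenvalue $\alpha^2-(n-1)^2$ on $E_\pi$ gives $\alpha^2=(\xi+\tau)^2$. Making this rigorous requires some care about which fixed vectors one is acting on (the identity holds modulo $U(\mathfrak o(2n))\mathfrak o(2n-1)$, not modulo $U(\mathfrak u(n))\mathfrak u(n-1)$); in the paper's actual applications the relation can also be read off directly from the explicit $K$-type formulas, so the gap is more cosmetic than substantive.
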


\begin{subequations}\label{se:Hdiff}
Suppose next that $G=Sp(n)\times Sp(1)$, $H=Sp(n-1)\times
Sp(1)_\Delta$ as in Section \ref{sec:H}. A maximal torus in $G$ is
\begin{equation}
  T=U(1)^n\times U(1),\qquad {\mathfrak t}_0 = {\mathbb
    R}^n\times {\mathbb R}, \qquad {\mathfrak t} = {\mathbb C}^n
  \times {\mathbb C}.
\end{equation}
The Weyl group $W(Sp(n)\times Sp(1))$ acts by sign changes on all
$n+1$ coordinates, and permutation of the first $n$
coordinates. Harish-Chandra's theorem identifies
\begin{equation}\label{eq:HCSp}
  {\mathfrak Z}({\mathfrak g}) \simeq S({\mathfrak t})^{W(Sp(n)\times Sp(1))} =
  {\mathbb C}[x_1,\cdots,x_n,y]^{W(Sp(n)\times Sp(1))}.
\end{equation}
Therefore
\begin{equation}
  \hbox{(maximal ideals in ${\mathfrak Z}({\mathfrak g})$})
  \leftrightarrow {\mathbb C}^{n+1}/W(Sp(n)\times Sp(1)).
\end{equation}
Suppose $z\in {\mathfrak Z}({\mathfrak g})$ corresponds to $p\in
{\mathbb C}[x_1,\cdots,x_n,y]^{W(Sp(n)\times Sp(1))}$ by
\eqref{eq:HCSp}. According to \eqref{eq:spectral1} and
\eqref{eq:Hinflchar}, the invariant differential operator $i_G(z)$
will act on $\pi_{d,e}\subset C^\infty(G/H)$ by the scalar
\begin{equation}\label{eq:Hscalar}
  p((d+n, e+(n-1),n-2,\cdots,1),(d-e+1)).
\end{equation}
\end{subequations}

\begin{proposition}\label{prop:Hdiff} With notation as above,
  the polynomial
  $$p\in {\mathbb C}[x_1,\cdots,x_n,y]^{W(Sp(n)\times Sp(1))}$$
  vanishes on the (affine) plane
  $$\{(\xi,\tau,n-2,\cdots,1)(\xi-\tau) \mid (\xi,\tau)\in {\mathbb C}^2\}.$$
  if and only if $i_G(z)\in I(G/H)$ is equal to zero.
\end{proposition}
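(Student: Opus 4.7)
The plan is to mimic verbatim the proof of Proposition~\ref{prop:Rdiff}, substituting the $Sp(n)\times Sp(1)$ infinitesimal character data from \eqref{eq:Hinflchar} for the $O(n)$ data of \eqref{eq:Oinfchar}. The two key inputs are (i) the spectral formula \eqref{eq:spectral1} combined with \eqref{eq:Hinflchar}, which tells us that for $z\in{\mathfrak Z}({\mathfrak g})$ corresponding to $p$ under \eqref{eq:HCSp}, the operator $i_G(z)$ acts on the subspace $\pi^{Sp(n)\times Sp(1)}_{d,e}\subset C^\infty(G/H)$ by the scalar \eqref{eq:Hscalar}; and (ii) the decomposition \eqref{e:Hkey}, which guarantees that the representations $\pi^{Sp(n)\times Sp(1)}_{d,e}$ for $d\ge e\ge 0$ exhaust $L^2(S^{4n-1})$ and in particular give a dense subspace of $C^\infty(G/H)$.

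For the ``only if'' direction, suppose $i_G(z)=0$. Then the scalar in \eqref{eq:Hscalar} vanishes for every pair of integers $d\ge e\ge 0$. Setting $\xi=d+n$ and $\tau=e+(n-1)$, so that $\xi-\tau=d-e+1$, this says precisely that $p$ vanishes at every point of the form $(\xi,\tau,n-2,\dots,1,\xi-\tau)$ with $\xi,\tau$ ranging over a set of the form $\{n,n+1,\dots\}\times\{n-1,n,\dots\}$ with $\xi\ge\tau+1$. The projection of this set of integer pairs to the $(\xi,\tau)$-plane is Zariski dense in ${\mathbb C}^2$, so $p$ vanishes on the entire plane parametrized in the statement.

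For the ``if'' direction, if $p$ vanishes on the plane, then in particular it vanishes at each of the integer points corresponding to $\pi^{Sp(n)\times Sp(1)}_{d,e}$ with $d\ge e\ge 0$. By \eqref{eq:spectral1}, the operator $i_G(z)$ then acts by zero on each such subspace of $C^\infty(G/H)$. Because the sum of these subspaces is dense in $L^2(G/H)$ by \eqref{e:Hkey}, and hence its image in the space of formal power series of smooth functions at the identity coset is dense, $i_G(z)$ kills every formal power series section at the base point. The faithfulness statement of Proposition~\ref{prop:invt} then forces $i_G(z)=0$.

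The main (and only mild) obstacle is checking Zariski density: one must verify that the integer pairs $\{(d+n,e+(n-1))\mid d\ge e\ge 0\}$ are Zariski dense in~${\mathbb C}^2$, which is immediate since any polynomial vanishing on this set of pairs, viewed as a polynomial in $\xi$ with coefficients that are polynomials in $\tau$, must vanish for infinitely many $\tau$ and for each such~$\tau$ at infinitely many integer values of~$\xi$. With this in hand, nothing further is required beyond the cited facts; in particular no new representation-theoretic input is needed for the $Sp(1)$ factor, because the variable $y$ has already been absorbed into $\xi-\tau$ on the affine plane.
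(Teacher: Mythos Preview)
Your proof is correct and follows exactly the same argument as the paper, which simply refers back to the proof of Proposition~\ref{prop:Rdiff} (``Exactly the same arguments apply''). One cosmetic slip: you have interchanged the labels ``if'' and ``only if'' relative to the statement (and to the paper's proof of Proposition~\ref{prop:Rdiff}); the direction you call ``only if'' is the one assuming $i_G(z)=0$, which is the ``if'' direction of ``$p$ vanishes if and only if $i_G(z)=0$''.
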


\begin{corollary}\label{cor:Hdiff}
\addtocounter{equation}{-1}
\begin{subequations}
  The infinitesimal characters for
  $Sp(n)\times Sp(1)$ which factor to $i_G({\mathfrak Z}({\mathfrak
    g}))$ are indexed by weights
  \begin{equation}\label{eq:Hinfchar}
    (\xi,\tau,n-2,\cdots,1)(\xi-\tau) \qquad ((\xi,\tau) \in {\mathbb
      C}^2).
  \end{equation}

   Suppose $(\gamma,F_\gamma)$ is a representation of ${\mathfrak
     s}{\mathfrak p}(n,{\mathbb C}) \times {\mathfrak
     s}{\mathfrak p}(1,{\mathbb C})$ having an infinitesimal
   character, and that $(F_\gamma^*)^{{\mathfrak s}{\mathfrak
       p}(n-1,{\mathbb C}) \times {\mathfrak
     s}{\mathfrak p}(1,{\mathbb C})_\Delta} \ne 0$. Then $F_\gamma$
   has infinitesimal character of the form \eqref{eq:Hinfchar};
   $\xi-\tau$ is the infinitesimal character of the ${\mathfrak
     s}{\mathfrak p}(1,{\mathbb C})$ factor. If in
   addition $F_\gamma \subset 
   E_\pi$ for some representation $(\pi,E_\pi)$ of ${\mathfrak
     o}(4n,{\mathbb C})$ as in Corollary \ref{cor:Rdiff}, then we may take
$\xi+\tau = \alpha$.
  \end{subequations}
\end{corollary}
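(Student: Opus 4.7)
The plan is to follow the template of the proof of Corollary \ref{cor:Rdiff} exactly, with Proposition \ref{prop:Hdiff} replacing Proposition \ref{prop:Rdiff}. Part (a) is immediate: Proposition \ref{prop:Hdiff} identifies the image $i_G(\mathfrak{Z}(\mathfrak{g}))$ with the Weyl-invariant polynomial functions on the affine plane $\{(\xi,\tau,n-2,\dots,1)(\xi-\tau)\}$, so any infinitesimal character factoring through $i_G$ is indexed by a point of this plane modulo $W(Sp(n)\times Sp(1))$.

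For part (b), take a nonzero $\lambda\in(F_\gamma^*)^H$ and, to each $v\in F_\gamma$, associate the formal Taylor expansion at $eH$ of the matrix coefficient $g\mapsto\lambda(\gamma(g^{-1})v)$. This produces a nonzero $(\mathfrak{g},H)$-equivariant map from $F_\gamma$ into formal power series sections of $C^\infty(G/H)$ at the base point, on which $I(G/H)$ acts faithfully by Proposition \ref{prop:invt}. Any $z\in\ker(i_G)$ therefore acts by zero on the image; since $\mathfrak{Z}(\mathfrak{g})$ acts on $F_\gamma$ by the scalar $\chi_\gamma(z)$, we conclude $\chi_\gamma(z)=0$, so $\chi_\gamma$ factors through $i_G$, and part (a) identifies it as \eqref{eq:Hinfchar}. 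Part (c) then follows from the compatibility of Harish-Chandra isomorphisms: the inclusion $\mathfrak{sp}(1,\mathbb{C})\hookrightarrow\mathfrak{g}$ realizes $\mathfrak{Z}(\mathfrak{sp}(1,\mathbb{C}))\cong\mathbb{C}[y^2]$ as the subalgebra of $\mathbb{C}[x_1,\dots,x_n,y]^W$ of polynomials in $y$ alone, and reading off the $y$-coordinate of \eqref{eq:Hinfchar} gives $\pm(\xi-\tau)$.

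The substantive step, and expected main obstacle, is part (d). The two algebras $\mathfrak{Z}(\mathfrak{o}(4n,\mathbb{C}))$ and $\mathfrak{Z}(\mathfrak{g})$ do not share a natural common subalgebra, so the bridge has to go through their joint action on $F_\gamma\subset E_\pi$. I would use the $O(4n)$-Casimir: decompose $\mathfrak{o}(4n,\mathbb{C})=\mathfrak{g}\oplus\mathfrak{p}$ orthogonally under the trace form (an $\mathrm{Ad}(G)$-stable decomposition), and write $\Omega_{O(4n)}=\Omega'_{\mathfrak{g}}+Q$, where $\Omega'_{\mathfrak{g}}\in\mathfrak{Z}(\mathfrak{g})$ is built from the $\mathfrak{o}(4n)$-form restricted to $\mathfrak{g}$ and $Q=-\sum_k Z_k^2\in U(\mathfrak{o}(4n))$; a direct bracket computation using $\mathrm{ad}$-skew-symmetry of the trace form on $\mathfrak{p}$ shows $Q$ centralizes $\mathfrak{g}$, so by Schur's lemma $Q$ acts on each $\mathfrak{g}$-irreducible constituent $F_\gamma$ by a scalar. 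Matching the eigenvalue $\chi_\pi(\Omega_{O(4n)})=\alpha^2-(2n-1)^2$ (specializing \eqref{eq:LR} with $n\mapsto 4n$ and $a_{\text{orbit}}=\alpha$) against $\chi_\gamma(\Omega'_{\mathfrak{g}})$ plus the Schur scalar gives the target relation $\xi+\tau=\alpha$, modulo the Weyl-group ambiguity described in the corollary's parenthetical.

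The truly hard part is pinning down the Schur scalar of $Q$ for an arbitrary embedding $F_\gamma\hookrightarrow E_\pi$. I would finish by noting that $\xi+\tau=\alpha$ is a polynomial identity in the infinitesimal-character parameters that already holds on the Zariski-dense family of pairs $\pi^{Sp(n)\times Sp(1)}_{d,e}\subset\pi^{O(4n)}_{d+e}$ from Theorem \ref{thm:OSpcptbranch}, where the computation $(d+n)+(e+n-1)=d+e+2n-1=a+2n-1=\alpha$ is direct, and therefore the identity holds in general.
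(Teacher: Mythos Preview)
Your treatment of parts (a)--(c) is correct and matches the paper, which simply refers back to the argument for Corollary~\ref{cor:Rdiff}.

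The gap is in part (d). Your decomposition $\Omega_{O(4n)}=\Omega'_{\mathfrak g}+Q$ with $Q$ centralizing $\mathfrak g$ is fine, but the Schur scalar of $Q$ on $F_\gamma\subset E_\pi$ genuinely depends on the ambient $\mathfrak o(4n)$-module $E_\pi$, not only on the $\mathfrak g$-isomorphism class of $F_\gamma$. (Think of the trivial $F_\gamma$: it sits inside many $E_\pi$, and there $Q$ acts by $\chi_\pi(\Omega_{O(4n)})$, which varies.) So the relation $\xi+\tau=\alpha$ is not a polynomial identity in any parameter space on which the compact family of Theorem~\ref{thm:OSpcptbranch} is Zariski dense: that family lives entirely on the hypersurface $\alpha=\xi+\tau$ in $(\xi,\tau,\alpha)$-space and tells you nothing off it. Your closing sentence therefore does not pin down the Schur scalar for an arbitrary embedding.

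What the paper does (implicitly, via \eqref{e:extraHspec}--\eqref{eq:LOSp}) is apply Zariski density at the level of \emph{operators} rather than parameters: one proves the identity
\[
i_{O(4n)}(\Omega_{O(4n)})\;=\;i_G\bigl(2\,\Omega_{Sp(n)}-\Omega_{Sp(1)}\bigr)
\]
inside $I(G/H)$, by checking that both sides have the same eigenvalue on every $\pi^{Sp(n)\times Sp(1)}_{d,e}\subset C^\infty(S^{4n-1})$ and invoking the faithfulness in Proposition~\ref{prop:invt}. This is an identity of two specific elements in a fixed algebra, so density of the finite-dimensional spectrum is a legitimate tool. Now take the $\mathfrak o(4n-1)$-fixed functional on $E_\pi^*$ guaranteed by the hypothesis ``as in Corollary~\ref{cor:Rdiff}''; since $\mathfrak h\subset\mathfrak o(4n-1)$ it restricts to an $\mathfrak h$-fixed functional on $F_\gamma^*$, giving a formal-power-series realization of $F_\gamma$ on which the left side above acts by $\alpha^2-(2n-1)^2$ and the right side by $(\xi+\tau)^2-(2n-1)^2$. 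Equating yields $\xi+\tau=\pm\alpha$. Note that this bypasses $Q$ entirely: the operator $2\Omega_{Sp(n)}-\Omega_{Sp(1)}$ is not your $\Omega'_{\mathfrak g}$ (the sign is wrong for a sum of squares), so your decomposition is a detour rather than a step toward the paper's identity.
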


This is a good setting in which to consider the more general invariant
differential operators from \eqref{se:constructinvts2}.
\begin{subequations}\label{se:HG1diff}
Suppose in that general setting that $G_1$ is reductive, and choose a Cartan
subalgebra ${\mathfrak t}_1\subset {\mathfrak g}_1$, with (finite) Weyl group
\begin{equation}
  W(G_1) =_{\text{def}} N_{G_1({\mathbb C})}({\mathfrak
    t}_1)/Z_{G_1({\mathbb C})}({\mathfrak t_1}) \subset
  \Aut({\mathfrak t}_1), \qquad {\mathfrak Z}({\mathfrak g}_1)
  \simeq S({\mathfrak t})^{W_1}.
\end{equation}
The adjoint action of $H$ on $G_1$ defines another Weyl group,
which normalizes $W(G_1)$:
\begin{equation}
  W(G_1) \triangleleft W_H(G_1) =_{\text{def}} N_{H({\mathbb C})}({\mathfrak
    t}_1)/Z_{H({\mathbb C})}({\mathfrak t_1}) \subset \Aut({\mathfrak
    t}_1), \qquad {\mathfrak Z}({\mathfrak g}_1)^H
  \simeq S({\mathfrak t}_1)^{W_H(G_1)}.
\end{equation}
Under mild hypotheses (for example $G_1$ is reductive algebraic and the
adjoint action of $H$ is algebraic) then $W_H(G_1)$ is finite, so the
algebra ${\mathfrak Z}({\mathfrak g}_1)$ is finite over ${\mathfrak
  Z}({\mathfrak g}_1)^H$, and the maximal ideals in this smaller algebra
are given by evaluation at
\begin{equation}
  \mu\in {\mathfrak t}_1^*/W_H(G_1).
  \end{equation}
In the case $G_1=Sp(1)\times Sp(1)$, the adjoint action of $H$ on $G_1$ is
contained in that of $G_1$, so $W(G_1)=W_H(G_1)$, and ${\mathfrak Z}({\mathfrak
  g}_1)^H = {\mathfrak Z}({\mathfrak g}_1)$. We have
\begin{equation}
  T_1=U(1)^2, \qquad {\mathfrak t}_{1,0} = {\mathbb R}^2, \qquad {\mathfrak
    t}_1 = {\mathbb C}^2.
\end{equation}
The Weyl group $W(G_1)=W_H(G_1)$ acts by sign changes on each
coordinate, so the Harish-Chandra isomorphism is
\begin{equation}\label{eq:HCH}
  {\mathfrak Z}({\mathfrak g}_1)^H = {\mathfrak Z}({\mathfrak g}_1) \simeq
  S({\mathfrak t}_1)^{W(G_1)} = {\mathbb C}[u_1,u_2]^{W(G_1)}
\end{equation}

Suppose therefore that $z_1\in {\mathfrak Z}({\mathfrak g}_1)$
corresponds to $p_1\in {\mathbb C}[x_1,x_2]^{W(G_1)}$. According to
\eqref{eq:spectral2} and
\eqref{eq:Hsubinflchar}, the invariant differential operator $i_{G_1}(z_1)$
acts on $\pi^{Sp(n)\times Sp(1)}_{d,e} \subset C^\infty(G/H)$ by the
scalar
\begin{equation}\label{eq:HG1scalar}
  p_1(d-e+1,d-e+1).
\end{equation}
\end{subequations}

\begin{proposition}\label{prop:HG1diff} With notation as above,
  suppose that
  $$P\in {\mathbb C}[x_1,\cdots,x_n,y,u_1,u_2]^{W(G)\times W(G_1)},$$
  and write $Z\in {\mathfrak Z}({\mathfrak g})\otimes {\mathfrak
    Z}({\mathfrak g}_1)^H$ for the corresponding central element. Then $P$
  vanishes on the affine plane
  $$\{(\xi,\tau,n-2,\cdots,1)(\xi-\tau)(\xi-\tau,\xi-\tau)\mid
  (\xi,\tau)\in {\mathbb C}^2\}.$$
  if and only if $(i_G\otimes i_{G_1})(Z)\in I(G/H)$ is equal to zero.
\end{proposition}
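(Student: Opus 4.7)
The plan is to mirror the proof of Proposition \ref{prop:Rdiff}, now adapted to the tensor-product setting of \eqref{se:constructinvts2}. The single conceptual input is that $i_G(\mathfrak{Z}(\mathfrak{g}))$ and $i_{G_1}(\mathfrak{Z}(\mathfrak{g}_1)^H)$ commute inside $I(G/H)$ and jointly act by scalars on each isotypic piece of $C^\infty(S^{4n-1})$; everything else is a Zariski density argument on a two-dimensional affine plane.

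First I would compute the scalar by which $(i_G\otimes i_{G_1})(Z)$ acts on a given summand $\pi^{Sp(n)\times Sp(1)}_{d,e}\subset C^\infty(G/H)$. Each factor acts by a scalar: the $i_G$ part by \eqref{eq:spectral1} with the infinitesimal character \eqref{eq:Hinflchar}, and the $i_{G_1}$ part by \eqref{eq:spectral2} with the infinitesimal character \eqref{eq:Hsubinflchar} of the subrepresentation $[Sp(1)\times Sp(1)]\cdot [\pi^{Sp(n)\times Sp(1)}_{d,e}]^{Sp(n-1)\times Sp(1)_\Delta}$. Thus $(i_G\otimes i_{G_1})(Z)$ acts on $\pi_{d,e}$ by
\[ P\bigl(d+n,\, e+(n-1),\, n-2,\ldots, 1,\, d-e+1,\, d-e+1,\, d-e+1\bigr). \]
Setting $\xi=d+n$ and $\tau=e+(n-1)$, so that $\xi-\tau=d-e+1$, this is precisely the value of $P$ at the point $(\xi,\tau,n-2,\ldots,1)(\xi-\tau)(\xi-\tau,\xi-\tau)$ of the affine plane in the statement.

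For the ``if'' direction, vanishing of $P$ on the plane gives vanishing of $(i_G\otimes i_{G_1})(Z)$ on every $\pi_{d,e}$; since $\bigoplus_{d\ge e\ge 0}\pi_{d,e}$ is dense in $C^\infty(S^{4n-1})$ by \eqref{e:Hkey}, the faithfulness statement in Proposition \ref{prop:invt} forces the operator to be zero. For the ``only if'' direction, vanishing of the operator means that $P$ vanishes at $(\xi,\tau)=(d+n,e+(n-1))$ for every integer pair with $d\ge e\ge 0$. Fixing $e$ and letting $d$ vary produces infinitely many zeros of the one-variable polynomial obtained by restricting $P$ in $\xi$, so it vanishes identically for that value of $\tau$; then letting $e$ vary and repeating in $\tau$ forces $P$ to vanish on the whole plane.

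The step requiring care, and what I would regard as the main obstacle, is the justification of the scalar computation for the $i_{G_1}$ factor. What \eqref{eq:spectral2} actually gives is that $i_{G_1}(z_1)$ acts on the image of $\pi_{d,e}$ in $C^\infty(G/H)$ by $\chi_1(z_1)$, where $\chi_1$ is the infinitesimal character of the contragredient of the $G_1$-representation generated by the $H$-invariants in the dual. For $G_1=Sp(1)\times Sp(1)$, however, every representation is self-dual (the long Weyl element acts as $-\Id$ on the Cartan), so the contragredient is invisible at the level of infinitesimal characters, and $\chi_1$ is exactly \eqref{eq:Hsubinflchar}. Once this commutes-and-acts-by-scalars picture is in place, the argument collapses to the same density lemma used in Proposition \ref{prop:Rdiff}.
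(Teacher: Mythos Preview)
Your argument is correct and follows exactly the route the paper intends: it explicitly says ``Exactly the same arguments apply'' after proving Proposition~\ref{prop:Rdiff}, and your write-up is precisely that adaptation, using the scalars \eqref{eq:Hscalar} and \eqref{eq:HG1scalar} together with Zariski density and the faithfulness in Proposition~\ref{prop:invt}. One small slip: you have the ``if'' and ``only if'' labels swapped relative to the statement (``$P$ vanishes if and only if the operator is zero'' makes ``if'' the implication from operator-vanishing to polynomial-vanishing, which is your density argument); the mathematics is unaffected.
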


\begin{corollary}\label{cor:HG1diff}
\addtocounter{equation}{-1}
\begin{subequations}
  In the setting $G/H =
  (Sp(n)\times Sp(1))/(Sp(n-1)\times Sp(1)_\Delta)$, $G_1=Sp(1)\times Sp(1)$,
  the characters of the tensor product algebra \eqref{eq:Zgprodi}
  which factor to the image in $I(G/H)$ are indexed by weights
  \begin{equation}\label{eq:HG1infchar}
    (\xi,\tau,n-2,\cdots,1)(\xi-\tau)(\xi-\tau,\xi-\tau).
  \end{equation}
 Here the first $n$ coordinates are giving the infinitesimal
  character for $Sp(n)$; the next is the infinitesimal character for
  the $Sp(1)$ factor of $G$; and the last two are the infinitesimal
  character for $G_1$.

  Suppose $(\gamma,F_\gamma)$ is an ${\mathfrak
    s}{\mathfrak p}(n,{\mathbb C})$ representation as in Corollary
  \ref{cor:Hdiff}. Then the ${\mathfrak g}_1$ representation
  generated by $(F_\gamma^*)^{{\mathfrak s}{\mathfrak p}(n-1,{\mathbb
      C})}$ has infinitesimal character $(\xi-\tau,\xi-\tau)$.
\end{subequations}
\end{corollary}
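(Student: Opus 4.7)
The plan is to deduce both parts from Proposition \ref{prop:HG1diff} by the same commutative-algebra mechanism that extracted Corollary \ref{cor:Hdiff} from Proposition \ref{prop:Hdiff}. Part (a) is immediate: by Proposition \ref{prop:HG1diff} the kernel of $i_G \otimes i_{G_1}$ is precisely the vanishing ideal of the affine plane
\[\mathcal{P} = \{(\xi,\tau,n-2,\dots,1)(\xi-\tau)(\xi-\tau,\xi-\tau) \mid (\xi,\tau) \in {\mathbb C}^2\},\]
so the image of $i_G \otimes i_{G_1}$ in $I(G/H)$ is the coordinate ring of $\mathcal{P}$, whose maximal spectrum is $\mathcal{P}$ itself, parametrized by $(\xi,\tau) \in {\mathbb C}^2$.

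For part (b), I would first invoke Corollary \ref{cor:Hdiff} to identify the ${\mathfrak Z}({\mathfrak g})$-infinitesimal character of $\gamma$ as $(\xi,\tau,n-2,\dots,1)(\xi-\tau)$, pinning down the parameters $(\xi,\tau)$. The key step is then to show that on $(F_\gamma^*)^{{\mathfrak h}}$, the joint action of ${\mathfrak Z}({\mathfrak g}) \otimes {\mathfrak Z}({\mathfrak g}_1)^H$ factors through its image in $I(G/H)$. An element $\sum_i z_i \otimes z_{1,i}$ lies in $\ker(i_G \otimes i_{G_1})$ exactly when $\sum_i z_i z_{1,i} \in U({\mathfrak g}){\mathfrak h}$ (by unwinding the definition of $I(G/H)$ as $\Ad(H)$-invariants in $U({\mathfrak g}) \otimes_{U({\mathfrak h})} {\mathbb C}$), and any such element annihilates $(F_\gamma^*)^{{\mathfrak h}}$ because ${\mathfrak h}$ does. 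Hence the joint character on $(F_\gamma^*)^{{\mathfrak h}}$ corresponds to a point of $\mathcal{P}$; since the first $n+1$ coordinates are already $(\xi,\tau,n-2,\dots,1)(\xi-\tau)$, the last two must be $(\xi-\tau,\xi-\tau)$.

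To conclude, I would use the observation recorded at \eqref{eq:HCH} that in this setting ${\mathfrak Z}({\mathfrak g}_1)^H = {\mathfrak Z}({\mathfrak g}_1)$. Thus the character of ${\mathfrak Z}({\mathfrak g}_1)^H$ on $(F_\gamma^*)^{{\mathfrak h}}$ is already a full ${\mathfrak g}_1$-infinitesimal character, and it propagates to the generated ${\mathfrak g}_1$-module $V = U({\mathfrak g}_1)\cdot (F_\gamma^*)^{{\mathfrak h}}$: if $z_1 \in {\mathfrak Z}({\mathfrak g}_1)$ acts by $\chi_1(z_1)$ on a generator $v \in (F_\gamma^*)^{{\mathfrak h}}$, then for any $u \in U({\mathfrak g}_1)$ one has $z_1(uv) = u(z_1 v) = \chi_1(z_1)\,uv$ by centrality. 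Hence $V$ has infinitesimal character $(\xi-\tau,\xi-\tau)$, giving (b).

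The main obstacle I anticipate is the algebraic bookkeeping for the claim that $\ker(i_G \otimes i_{G_1})$ multiplies into $U({\mathfrak g}){\mathfrak h}$: one must chase through Proposition \ref{prop:invt}'s identification of $I(G/H)$ with $[U({\mathfrak g}) \otimes_{U({\mathfrak h})} {\mathbb C}]^{\Ad(H)}$, verify that $i_G \otimes i_{G_1}$ sends $z \otimes z_1$ to the class of $zz_1 \otimes 1$, and confirm that vanishing of this class is equivalent to $zz_1 \in U({\mathfrak g}){\mathfrak h}$. Once this is in place, the argument becomes a clean application of Proposition \ref{prop:HG1diff} together with Corollary \ref{cor:Hdiff}.
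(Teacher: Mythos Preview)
Your proposal is correct and follows essentially the same route the paper takes: the paper gives no separate argument for this corollary, instead declaring after Proposition~\ref{prop:Rdiff} that ``exactly the same arguments apply'' to all the later cases, so the intended proof is precisely to read off the characters from the vanishing ideal in Proposition~\ref{prop:HG1diff} and then observe that the action of ${\mathfrak Z}({\mathfrak g})\otimes{\mathfrak Z}({\mathfrak g}_1)^H$ on $(F_\gamma^*)^{\mathfrak h}$ factors through $I(G/H)$ because $U({\mathfrak g}){\mathfrak h}$ annihilates ${\mathfrak h}$-invariants. Your write-up makes this mechanism more explicit than the paper does, which is fine; the one point you might tighten is the passage from ``factors through $I(G/H)$'' to ``acts by a single character'': this follows because on the plane $\mathcal P$ the ${\mathfrak Z}({\mathfrak g}_1)$-generators reduce to polynomials in $(\xi-\tau)^2$, which already lies in the image of ${\mathfrak Z}({\mathfrak g})$, so scalarity of the ${\mathfrak Z}({\mathfrak g})$-action forces scalarity of the ${\mathfrak Z}({\mathfrak g}_1)$-action. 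Note also that the paper's framework at \eqref{e:extraHreps} and \eqref{eq:spectral2} really concerns the ${\mathfrak g}_1$-module generated by the full $(F_\gamma^*)^{\mathfrak h}$-invariants (with ${\mathfrak h}={\mathfrak s}{\mathfrak p}(n-1)\times{\mathfrak s}{\mathfrak p}(1)_\Delta$), which is exactly what you work with; the appearance of ``${\mathfrak s}{\mathfrak p}(n-1,{\mathbb C})$'' alone in the statement is a minor imprecision you need not chase.
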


\begin{subequations}\label{se:octdiff}
Suppose next that $G=\Spin(9)$, $H=\Spin(7)'$ as in Section
\ref{sec:O}. A maximal torus in $G$ is
\begin{equation}
  T= \hbox{double cover of\ }SO(2)^4,\qquad {\mathfrak t}_0 = {\mathbb
    R}^4, \qquad {\mathfrak t} = {\mathbb C}^4.
\end{equation}
The Weyl group $W(\Spin(9))$ acts by permutation and sign changes on these
four coordinates. Harish-Chandra's theorem identifies
\begin{equation}\label{eq:HCoct}
  {\mathfrak Z}({\mathfrak g}) \simeq S({\mathfrak t})^{W(\Spin(9))} =
  {\mathbb C}[x_1,\cdots,x_4]^{W(\Spin(9))}.
\end{equation}
Therefore
\begin{equation}
  \hbox{(maximal ideals in ${\mathfrak Z}({\mathfrak g})$})
  \longleftrightarrow {\mathbb C}^{4}/W(\Spin(9)).
\end{equation}
Suppose $z\in {\mathfrak Z}({\mathfrak g})$ corresponds to $p\in
{\mathbb C}[x_1,\cdots,x_4]^{W(\Spin(9))}$ by
\eqref{eq:HCoct}. According to \eqref{eq:spectral1} and
\eqref{eq:octinflchar}, the invariant differential operator $i_G(z)$
will act on $\pi^{\Spin(9)}_{x,y}\subset C^\infty(G/H)$ by the scalar
\begin{equation}\label{eq:octscalar}
  p((2x+y+7)/2,(y+5)/2,(y+3)/2,(y+1)/2).
\end{equation}
\end{subequations}

\begin{proposition}\label{prop:octdiff} With notation as above,
  the polynomial
  $$p\in {\mathbb C}[x_1,\cdots,x_4]^{W(\Spin(9))}$$
  vanishes on the (affine) plane
  $$\{(\xi,\tau+5/2,\tau+3/2,\tau+1/2) \mid (\xi,\tau)\in
  {\mathbb C}^2\}.$$
  if and only if $i_G(z)\in I(G/H)$ is equal to zero.
\end{proposition}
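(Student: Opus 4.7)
The plan is to mimic the argument already used for Proposition \ref{prop:Rdiff}, relying on the spectral identity \eqref{eq:spectral1} together with the decomposition $L^2(S^{15}) \simeq \sum_{x,y\ge 0} \pi^{\Spin(9)}_{x,y}$ coming from \eqref{e:Okey} and the faithfulness part of Proposition \ref{prop:invt}.

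First I would rewrite the infinitesimal character formula \eqref{eq:octinflchar} in the form of the affine plane appearing in the statement: setting $\tau = y/2$ and $\xi = (2x+y+7)/2$, the infinitesimal character of $\pi^{\Spin(9)}_{x,y}$ is precisely the point
\[
(\xi,\,\tau+5/2,\,\tau+3/2,\,\tau+1/2).
\]
As $(x,y)$ ranges over nonnegative integers, the resulting pairs $(\xi,\tau)$ are Zariski dense in ${\mathbb C}^2$: $y/2$ ranges over an infinite subset of ${\mathbb R}$, and for each fixed $y$ the values of $\xi$ form an arithmetic progression. Hence the corresponding infinitesimal characters are Zariski dense in the affine plane described in the proposition.

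For the ``if'' direction, suppose $i_G(z)=0$. By \eqref{eq:spectral1}, $p$ vanishes at the infinitesimal character of every $\pi^{\Spin(9)}_{x,y}$, and Zariski density forces $p$ to vanish on the whole plane. For the ``only if'' direction, suppose $p$ vanishes on the plane. Then \eqref{eq:spectral1} shows that $i_G(z)$ acts by zero on each $\pi^{\Spin(9)}_{x,y} \subset C^\infty(S^{15})$; since the algebraic sum of these subspaces is $C^\infty$-dense in $C^\infty(S^{15})$ (they exhaust the $K$-types of a compact Gelfand pair), the operator $i_G(z)$ annihilates every smooth function, and in particular every formal power series section at the base point. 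The faithfulness statement in Proposition \ref{prop:invt} then gives $i_G(z)=0$.

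The main potential obstacle is precisely this density step: one must pass from annihilation of the $L^2$-dense algebraic sum of $K$-types to annihilation of formal power series sections at a single point, which is what the faithfulness hypothesis in Proposition \ref{prop:invt} requires. This is handled by the usual fact that on a compact homogeneous space the $K$-finite smooth functions are dense in $C^\infty(G/H)$ in the $C^\infty$ topology, so differentiating any smooth section to any order at the basepoint produces a value that can be approximated by finite sums of $K$-types; continuity of the differential operator $i_G(z)$ in the $C^\infty$ topology then transfers the vanishing to all jets at the basepoint. Apart from this, the argument is entirely parallel to the one spelled out for Proposition \ref{prop:Rdiff}.
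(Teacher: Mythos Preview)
Your proof is correct and follows exactly the same approach as the paper, which simply says that the argument for Proposition~\ref{prop:Rdiff} applies verbatim to this case. Your identification $\tau=y/2$, $\xi=(2x+y+7)/2$ and the Zariski density check are the only details that need adapting, and you have handled them correctly.
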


\begin{corollary}\label{cor:octdiff}
\addtocounter{equation}{-1}
\begin{subequations}
  The infinitesimal characters for
  $\Spin(9)$ factoring to $i_G({\mathfrak Z}({\mathfrak
    g}))$ are indexed by weights
  \begin{equation}\label{eq:octinfchar}
    (\xi,\tau+5/2,\tau+3/2,\tau+1/2) \qquad ((\xi,\tau) \in {\mathbb C}^2).
  \end{equation}

     Suppose $(\gamma,F_\gamma)$ is a representation of ${\mathfrak
    s}{\mathfrak p}{\mathfrak i}{\mathfrak n}(9,{\mathbb C})$ having
     an infinitesimal character, and that
  $(F_\gamma^*)^{{\mathfrak h}({\mathbb C})} \ne 0$. Then $F_\gamma$ has
  infinitesimal character of the form \eqref{eq:Hinfchar}. If the
  ${\mathfrak s} {\mathfrak p} {\mathfrak i} {\mathfrak n} (8,{\mathbb
    C})$-module generated by $(F_\gamma^*)^{{\mathfrak h}({\mathbb
      C})}$ has a submodule with an infinitesimal character, then we
  may choose $\tau$ so that this infinitesimal character is
  \begin{equation}
    (\tau+3,\tau+2,\tau+1,\tau).
  \end{equation}
If in addition $F_\gamma \subset E_\pi$ for
  some representation $(\pi,E_\pi)$ of ${\mathfrak o}(16,{\mathbb C})$
  as in Corollary \ref{cor:Rdiff} (with infinitesimal character
  parameter $\alpha$) then we may choose $\xi=\alpha/2$.
  \end{subequations}
\end{corollary}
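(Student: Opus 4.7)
The plan is to treat all four assertions of Corollary \ref{cor:octdiff} by a common mechanism: combine Harish-Chandra's identification of $\mathfrak{Z}$-characters with Weyl orbits of weights, the faithfulness statement in Proposition \ref{prop:invt}, and the explicit kernels of $i_G$ (and its variants) cut out by Proposition \ref{prop:octdiff}.

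The first assertion is immediate from Proposition \ref{prop:octdiff}: characters of $\mathfrak{Z}(\mathfrak{spin}(9,\mathbb{C}))$ factoring through $i_G(\mathfrak{Z}(\mathfrak{g}))$ are exactly those vanishing on $\ker(i_G)$, and this is the ideal of $W(\Spin(9))$-invariants vanishing on the affine plane \eqref{eq:octinfchar}. For the second assertion, I pick a nonzero $v^* \in (F_\gamma^*)^{\mathfrak{h}(\mathbb{C})}$ and form the $\mathfrak{g}$-equivariant map $\phi \colon U(\mathfrak{g}) \otimes_{U(\mathfrak{h})} \mathbb{C} \to F_\gamma^*$, $u \otimes 1 \mapsto u \cdot v^*$. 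Because any $z \in \mathfrak{Z}(\mathfrak{g})$ is automatically $\Ad(H)$-invariant, $i_G(z) = 0$ forces $z \otimes 1 = 0$ in $U(\mathfrak{g}) \otimes_{U(\mathfrak{h})} \mathbb{C}$; hence $\chi_\gamma(z) v^* = \phi(z \otimes 1) = 0$, so $\chi_\gamma(z) = 0$, and $\chi_\gamma$ factors through the image of $i_G$, which by the first assertion forces the stated form.

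For the third assertion I repeat this strategy with $G_1 = \Spin(8) \supset H = \Spin(7)'$; since $H \subset G_1$, we have $\mathfrak{Z}(\mathfrak{g}_1)^H = \mathfrak{Z}(\mathfrak{g}_1)$. The Zariski density argument underlying Proposition \ref{prop:octdiff}, fed with the infinitesimal characters $((y+6)/2, (y+4)/2, (y+2)/2, y/2)$ of $\Spin(8) \cdot [\pi^{\Spin(9)}_{x,y}]^{\Spin(7)'}$ from \eqref{eq:octsubinflchar}, identifies $\ker(i_{G_1})$ as the ideal of $W(\Spin(8))$-invariants vanishing on the line $(\tau+3, \tau+2, \tau+1, \tau)$. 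Applying the second-assertion argument to the $\mathfrak{g}_1$-submodule generated by $(F_\gamma^*)^{\mathfrak{h}(\mathbb{C})}$ then gives its infinitesimal character the claimed form. Consistency of $\tau$ with the ambient $\Spin(9)$ parameter is handled by the joint map $i_G \otimes i_{G_1}$, in the style of Proposition \ref{prop:HG1diff}: the pairs $((\xi, \tau+5/2, \tau+3/2, \tau+1/2), (\tau+3, \tau+2, \tau+1, \tau))$ obtained from the explicit $\pi^{\Spin(9)}_{x,y}$ with $\tau = y/2$ are Zariski dense in the diagonal affine plane, so up to a Weyl-group choice of representative the two $\tau$'s agree.

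For the fourth assertion, consider the joint map $i_G \otimes i_{G_0} \colon \mathfrak{Z}(\mathfrak{spin}(9,\mathbb{C})) \otimes \mathfrak{Z}(\mathfrak{o}(16,\mathbb{C})) \to I(G/H)$, well-defined because $\Spin(9) \hookrightarrow O(16)$ via the spin representation acts transitively on $S^{15} = G/H = G_0/H_0$, and $i_G(\mathfrak{Z}(\mathfrak{g}))$ is central in $I(G/H)$. By Theorem \ref{thm:Ospin9cptbranch}, the joint parameters arising from $\pi^{\Spin(9)}_{x,y} \subset \pi^{O(16)}_{2x+y}$ satisfy $\alpha = 2x + y + 7 = 2\xi$, and are Zariski dense in that two-parameter subvariety. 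Given the hypotheses, I would lift $v^* \in (F_\gamma^*)^{\mathfrak{h}(\mathbb{C})}$ to an $\mathfrak{h}$-invariant $w^* \in E_\pi^*$ (possible since $\mathfrak{h}$ is semisimple, so acts completely reducibly on $E_\pi^*$), and form the matrix coefficient $f_{v,w^*}(g) = \langle w^*, g^{-1} v \rangle \in C^\infty(G/H)$; this is nonzero for suitable $v \in F_\gamma$, and $\mathfrak{Z}(\mathfrak{g})$ acts on it by $\chi_\gamma$ while $\mathfrak{Z}(\mathfrak{g}_0)$ acts by $\chi_\pi$ (up to the standard antipode normalization). Any $Z \in \ker(i_G \otimes i_{G_0})$ must annihilate $f_{v,w^*}$, forcing the joint scalar to vanish, so the joint infinitesimal character lies on the Zariski closure of the explicit examples, giving $\xi = \alpha/2$ up to the Weyl-group sign change on $\xi$. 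The main obstacle is the bookkeeping for the joint Zariski density together with the antipode normalization in matrix coefficients, but once the setup is in place the identity is algebraic and falls out.
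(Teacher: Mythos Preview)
Your treatment of the first three assertions is correct and is precisely what the paper has in mind (the paper says only ``exactly the same arguments apply'' as for Proposition~\ref{prop:Rdiff} and Corollary~\ref{cor:Rdiff}, so you have in fact written out more than the paper does). The algebraic mechanism---nonzero $\mathfrak h$-fixed vector in $F_\gamma^*$ forces $\chi_\gamma$ to kill $\ker(i_G)$, hence to lie on the affine variety cut out by Proposition~\ref{prop:octdiff}---is exactly right, and your handling of $G_1=\Spin(8)$ via \eqref{eq:octsubinflchar} and the joint map follows the pattern of Proposition~\ref{prop:HG1diff} and Corollary~\ref{cor:HG1diff}.

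There is a gap in your argument for the fourth assertion. You lift $v^*\in(F_\gamma^*)^{\mathfrak h}$ to an $\mathfrak h$-invariant $w^*\in E_\pi^*$ and assert that the invariant operator $i_{G_0}(z_0)$ (for $z_0\in\mathfrak Z(\mathfrak o(16))$) acts on $f_{v,w^*}$ by $\chi_\pi(z_0)$. But $i_{G_0}(z_0)$, transported to $I(\Spin(9)/\Spin(7)')$ via the isomorphism of Section~\ref{sec:size}, equals $z_0\otimes 1$ in $U(\mathfrak o(16))\otimes_{U(\mathfrak o(15))}\mathbb C$; rewriting it in $U(\mathfrak{spin}(9))$ introduces correction terms lying in $U(\mathfrak o(16))\cdot\mathfrak o(15)$. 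Those terms annihilate an $\mathfrak o(15)$-fixed vector but \emph{not} a merely $\mathfrak{spin}(7)'$-fixed one, so your computation of the $i_{G_0}(z_0)$-eigenvalue on $f_{v,w^*}$ does not go through.

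The repair is to use instead the $\mathfrak o(15)$-fixed vector $\tilde w^*\in E_\pi^*$ supplied by the hypothesis ``as in Corollary~\ref{cor:Rdiff}'' (such $\tilde w^*$ is automatically $\mathfrak{spin}(7)'$-fixed since $\mathfrak{spin}(7)'\subset\mathfrak o(15)$). With this choice both $i_G(z)$ and $i_{G_0}(z_0)$ act on the formal power series $f_{v,\tilde w^*}$ by the claimed scalars, and your Zariski-density argument then gives $\xi=\alpha/2$. What remains to be checked is that $f_{v,\tilde w^*}\ne 0$ for some $v\in F_\gamma$, i.e.\ that the image of $\tilde w^*$ under the projection $E_\pi^*\twoheadrightarrow F_\gamma^*$ is nonzero; this is not automatic from the stated hypotheses and needs either an additional argument or an implicit strengthening of ``$F_\gamma\subset E_\pi$'' to mean that $F_\gamma$ meets the $\mathfrak o(15)$-cofixed part of $E_\pi$.
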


For the last two cases we write even less.
\begin{corollary}\label{cor:G2diff}
\addtocounter{equation}{-1}
\begin{subequations}
  When $G/H = G_{2,c}/SU(3)$, the
  infinitesimal characters for
  $G_{2,c}$ which factor to $i_G({\mathfrak Z}({\mathfrak
    g}))$ are indexed by weights
  \begin{equation}\label{eq:G2infchar}
    (2\xi,(1/2)-\xi,-(1/2)-\xi) \qquad (\xi \in {\mathbb C}).
  \end{equation}

 Suppose $(\gamma,F_\gamma)$ is a representation of ${\mathfrak
    g}_2({\mathbb C})$ having an infinitesimal character, and that
  $(F_\gamma^*)^{{\mathfrak u}(3,{\mathbb C})} \ne 0$. Then the
  infinitesimal character of $F_\gamma$ is of the form in
  \eqref{eq:G2infchar}. If in addition $F_\gamma \subset E_\pi$ for
  some representation $(\pi,E_\pi)$ of ${\mathfrak o}(7,{\mathbb C})$
  as in Corollary \ref{cor:Rdiff}, then we may take $\xi = \alpha/3$.
\end{subequations}
\end{corollary}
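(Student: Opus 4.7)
My plan is to mirror the argument of Proposition \ref{prop:Rdiff} and Corollary \ref{cor:Rdiff}. The first task is to establish the $G_{2,c}$ analog of Proposition \ref{prop:Rdiff}: a Weyl-invariant polynomial $p$ (under the Harish-Chandra identification of ${\mathfrak Z}({\mathfrak g}_2)$ with the $W(G_{2,c})$-invariants in the polynomial ring on the sum-zero hyperplane in ${\mathbb C}^3$) satisfies $i_G(z)=0$ if and only if $p$ vanishes on the affine line $\{(2\xi,1/2-\xi,-1/2-\xi)\mid \xi\in{\mathbb C}\}$. Combining \eqref{eq:G2inflchar} with \eqref{eq:spectral1}, $i_G(z)$ acts on $\pi_a\subset C^\infty(S^6)$ by the scalar $p((2a+5)/3,-(a+1)/3,-(a+4)/3)$; reparametrizing by $\xi=(2a+5)/6$ places these points Zariski-densely on the line. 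One direction is immediate from this density, while the other uses density of $\bigoplus_a\pi_a$ in $L^2(S^6)$ (from \eqref{e:G2key}) combined with the faithfulness in Proposition \ref{prop:invt}. Part (a) of the corollary follows at once: the maximal ideals of $i_G({\mathfrak Z}({\mathfrak g}))$ are the $W(G_{2,c})$-orbits of points on that line.

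For the first assertion of part (b), given a nonzero $v^*\in(F_\gamma^*)^{{\mathfrak u}(3,{\mathbb C})}$, I would invoke the universal property of induction: $v^*$ determines a $U({\mathfrak g})$-equivariant map $M:=U({\mathfrak g})\otimes_{U({\mathfrak h})}{\mathbb C}\to F_\gamma^*$ sending $1\otimes 1\mapsto v^*$. If $z\in\ker(i_G)$, then since $i_G(z)=z\otimes 1$ inside $I(G/H)=M^{\mathfrak h}\subset M$, we have $z\otimes 1=0$ in $M$, and hence $zv^*=0$. Together with $zv^*=\chi_{\gamma^*}(z)v^*$ this forces $\chi_{\gamma^*}(z)=0$, so $\chi_{\gamma^*}$ factors through $i_G({\mathfrak Z}({\mathfrak g}))$ and lies on the line of part (a). Because $-1\in W(G_{2,c})$, the characters $\chi_\gamma$ and $\chi_{\gamma^*}$ correspond to the same $W$-orbit, so $\chi_\gamma$ also has the form \eqref{eq:G2infchar}.

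For the assertion $\xi=\alpha/3$, I would exploit the Casimir identity from the theorem at the end of Section \ref{sec:G2}, which says that on spherical representations $\pi^{O(7)}_a(\Omega_{O(7)})=\pi^{G_{2,c}}_a(\tfrac{3}{2}\Omega_{G_{2,c}})$. Packaging this via the combined map ${\mathfrak Z}({\mathfrak o}(7))\otimes{\mathfrak Z}({\mathfrak g}_{2,c})\to I(S^6)$ of \eqref{eq:Zgprodi}, one sees that $\Omega_{O(7)}-\tfrac{3}{2}\Omega_{G_{2,c}}$ lies in the kernel of that map. Repeating the argument of the previous paragraph with the $O(6)$-fixed dual vector of $E_\pi$ (which pulls back to an $SU(3)$-fixed dual vector of $F_\gamma$), one deduces that $\chi_E(\Omega_{O(7)})=\tfrac{3}{2}\chi_F(\Omega_{G_{2,c}})$. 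Evaluating via \eqref{eq:LR} (with $n=7$) on the left and \eqref{eq:LG2} on the right gives $\alpha^2-\tfrac{25}{4}=9\xi^2-\tfrac{25}{4}$, so $\xi=\pm\alpha/3$; up to the Weyl ambiguity in the parametrization, one may take $\xi=\alpha/3$.

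The main technical obstacle is this final step: the Casimir identity is transparent only for spherical representations appearing in $L^2(S^6)$, and extending it to an abstract pair $F_\gamma\subset E_\pi$ requires some care in matching invariant vectors and running the $v^*$-based argument simultaneously for ${\mathfrak Z}({\mathfrak o}(7))$ and ${\mathfrak Z}({\mathfrak g}_{2,c})$ acting through a common module map. Beyond this, the entire argument is completely parallel to those sketched in Corollaries \ref{cor:Rdiff}--\ref{cor:octdiff}.
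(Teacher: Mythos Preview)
Your proposal is correct and matches the paper's approach. The paper gives no explicit proof for this corollary: it states just before Corollary~\ref{cor:Cdiff} that ``exactly the same arguments apply to the other examples,'' and just before Corollary~\ref{cor:G2diff} that ``for the last two cases we write even less.'' Your argument fills in precisely the template of Proposition~\ref{prop:Rdiff} and Corollary~\ref{cor:Rdiff} using the $G_{2,c}$ data from \eqref{eq:G2inflchar} and the Casimir relation from the theorem at the end of Section~\ref{sec:G2}; your reparametrization $\xi=(2a+5)/6$ and the Casimir computation $\alpha^2-25/4=9\xi^2-25/4$ are both correct.

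One small sharpening for the final step that removes most of the obstacle you flag: since $\Omega_{O(7)}-\tfrac{3}{2}\Omega_{G_{2,c}}$ acts by zero on every $\pi_a\subset C^\infty(S^6)$ and is $\Ad(SU(3))$-invariant, Proposition~\ref{prop:invt} (for $G_{2,c}/SU(3)$) together with the isomorphism $U({\mathfrak g}_2)\otimes_{U({\mathfrak s}{\mathfrak u}(3))}{\mathbb C}\simeq U({\mathfrak o}(7))\otimes_{U({\mathfrak o}(6))}{\mathbb C}$ from \eqref{se:size} shows that $\Omega_{O(7)}-\tfrac{3}{2}\Omega_{G_{2,c}}\in U({\mathfrak o}(7))\,{\mathfrak o}(6)$. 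Applying this to any ${\mathfrak o}(6)$-fixed vector $w^*\in E_\pi^*$ gives $\Omega_{G_{2,c}}w^*=\tfrac{2}{3}\chi_{E_\pi}(\Omega_{O(7)})\,w^*$; the remaining point is to relate the $U({\mathfrak g}_2)$-submodule generated by $w^*$ to $F_\gamma$, which is where the genuine care you mention is needed. The paper leaves this unspecified as well.
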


\begin{corollary}\label{cor:bigG2diff}
  \addtocounter{equation}{-1}
  \begin{subequations}
    When $G/H = \Spin(7)'/G_{2,c}$, the
  infinitesimal characters for
  $\Spin(7)'$ which factor to $i_G({\mathfrak Z}({\mathfrak
    g}))$ are indexed by weights
  \begin{equation}\label{eq:bigG2infchar}
    (\xi+1,\xi,\xi-1) \qquad (\xi \in {\mathbb C}).
  \end{equation}

 Suppose $(\gamma,F_\gamma)$ is a representation of ${\mathfrak
    s}{\mathfrak p}{\mathfrak i}{\mathfrak n}(7,{\mathbb C})'$ having
 an infinitesimal character, and that
  $(F_\gamma^*)^{{\mathfrak g}_2({\mathbb C})} \ne 0$. Then the
  infinitesimal character of $F_\gamma$ is of the form in
  \eqref{eq:bigG2infchar}. If in addition $F_\gamma \subset E_\pi$ for
  some representation $(\pi,E_\pi)$ of ${\mathfrak o}(8,{\mathbb C})$
  as in Corollary \ref{cor:Rdiff}, then we may take $\xi= \alpha/2$.
  \end{subequations}
\end{corollary}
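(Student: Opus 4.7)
I follow the template of Proposition \ref{prop:Rdiff} and Corollary \ref{cor:Rdiff}. Via the Harish--Chandra isomorphism, $\mathfrak{Z}(\mathfrak{spin}(7,\mathbb{C})') \simeq \mathbb{C}[x_1,x_2,x_3]^{W(B_3)}$; let $z$ correspond to $p$. By \eqref{eq:spectral1} combined with \eqref{eq:bigG2inflchar}, the invariant differential operator $i_G(z)$ acts on $\pi^{\Spin(7)'}_a \subset C^\infty(G/H)$ by the scalar $p((a+5)/2,(a+3)/2,(a+1)/2)$. Setting $\xi = (a+3)/2$, these scalars are the restrictions of $p$ to an infinite subset of the affine line $\{(\xi+1,\xi,\xi-1): \xi\in\mathbb{C}\}$, hence Zariski dense in it. The density of $\bigoplus_{a\ge 0}\pi^{\Spin(7)'}_a$ in $C^\infty(S^7)$ from \eqref{e:bigG2key}, together with the faithfulness clause of Proposition \ref{prop:invt}, shows that $i_G(z) = 0$ iff $p$ vanishes on that line; this yields the claimed parametrization of characters of $i_G(\mathfrak{Z}(\mathfrak{g}))$ by a single parameter $\xi$.

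For the statement about a general $F_\gamma$ with $(F_\gamma^*)^{\mathfrak{g}_2} \ne 0$, a non-zero $\phi \in (F_\gamma^*)^{\mathfrak{g}_2}$ produces a $(\mathfrak{g},H)$-equivariant map $v \mapsto [g \mapsto \phi(\gamma(g^{-1})v)]$ from $F_\gamma$ into formal scalar power series at $eH$ on $G/H$. On this image, any $z \in \mathfrak{Z}(\mathfrak{g})$ acts both by the scalar $\chi_{F_\gamma}(z)$ (from the module action on $v$) and by the invariant differential operator $i_G(z)$, so $\chi_{F_\gamma}$ annihilates $\ker(i_G)$ and thus factors through $i_G$, placing its parameter on the line $(\xi+1,\xi,\xi-1)$.

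Suppose now in addition that $F_\gamma \subset E_\pi$ for $E_\pi$ an $\mathfrak{o}(8,\mathbb{C})$-module with infinitesimal character $(\alpha,2,1,0)$ as in Corollary \ref{cor:Rdiff}. Using $\rho_{\Spin(7)'} = (5/2,3/2,1/2)$, one computes $\chi_{F_\gamma}(\Omega_{\Spin(7)'}) = \|(\xi+1,\xi,\xi-1)\|^2 - \|\rho_{\Spin(7)'}\|^2 = 3\xi^2 - 27/4$; similarly $\chi_\pi(\Omega_{O(8)}) = \alpha^2 - 9$ from the $\alpha$-shifted form of \eqref{eq:LR}. The scalar identity \eqref{eq:LbigG2} is valid on every $\pi^{\Spin(7)'}_a \subset C^\infty(S^7)$ and, by density and Proposition \ref{prop:invt}, promotes to the operator identity $(4/3)\Omega_{\Spin(7)'} = \Omega_{O(8)}$ in $I(G/H)$. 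Evaluating both sides on a formal germ at $e_1$ constructed from an $\mathfrak{o}(7,\mathbb{C})$-invariant functional on $E_\pi$ (which restricts nontrivially to $F_\gamma$ after possibly replacing $E_\pi$ by an appropriate subquotient, using $\mathfrak{g}_2 \subset \mathfrak{o}(7)$) yields $(4/3)(3\xi^2 - 27/4) = \alpha^2 - 9$, hence $\xi^2 = \alpha^2/4$. Since total sign change lies in both $W(B_3)$ and $W(B_4)$, the choice of Weyl representatives lets us take $\xi = \alpha/2$. The main obstacle is rigorously promoting \eqref{eq:LbigG2} to an operator identity in $I(G/H)$ (needed because $\Omega_{O(8)}$ originates in $U(\mathfrak{o}(8))$ rather than $U(\mathfrak{spin}(7)')$) and ensuring the germ map from $F_\gamma$ is non-zero.
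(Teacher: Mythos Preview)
Your argument for the first two assertions is correct and is exactly the paper's approach: the paper proves Proposition~\ref{prop:Rdiff} in detail and then says ``exactly the same arguments apply'' to the remaining cases including this one, so your density/faithfulness argument for the kernel of $i_G$ is precisely what is intended. Your formal-power-series argument for the second assertion is a correct elaboration; a slightly quicker phrasing is that $z\in\ker i_G$ means $z\in U(\mathfrak g)\mathfrak h$, so $z$ annihilates any $\mathfrak h$-fixed vector $\phi\in F_\gamma^*$, whence $\chi_{F_\gamma^*}(z)=0$, and since $-1\in W(B_3)$ also $\chi_{F_\gamma}(z)=0$.

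For the final clause ($\xi=\alpha/2$) your Casimir comparison via \eqref{eq:LbigG2} is the route the paper has in mind (this is literally the content of the last theorem of Section~\ref{sec:bigG2}), and your promotion of \eqref{eq:LbigG2} to an identity in $\mathbb D(S^7)$ is valid by density and Proposition~\ref{prop:invt}, using the inclusion of Section~\ref{sec:size}. The gap you flag, however, is real: the germ map built from $\phi\in(F_\gamma^*)^{\mathfrak g_2}$ is only $\mathfrak{spin}(7)'$-equivariant, so it does not a priori intertwine the $\Omega_{O(8)}$ action coming from $F_\gamma\subset E_\pi$ with the differential-operator action of $\Omega_{O(8)}$ on formal power series; and the $\mathfrak o(8)$-equivariant germ map built from $\psi\in(E_\pi^*)^{\mathfrak o(7)}$ may vanish on $F_\gamma$. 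The paper does not address this either. In the applications the paper actually makes (Sections~\ref{sec:Opq}--\ref{sec:bigncG2}), the modules arise inside $C^\infty$ of a real form of $S^7$ where both operator actions are the honest differential-operator actions, so the comparison is immediate there; for an abstract inclusion $F_\gamma\subset E_\pi$ the statement should be read with that caveat.
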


\section{Changing real forms}
\label{sec:changereal}
\setcounter{equation}{0}
Results like \eqref{eq:spectral1} and its generalization
\eqref{eq:spectral2} explain why it is interesting to study the
representations of $G$ appearing in $C^\infty({\mathcal V}_\tau)$ and
the invariant differential operators on this space. In this section we
state our first method for doing that.

\begin{definition}\label{def:realform}
\addtocounter{equation}{-1}
\begin{subequations}\label{se:realform}
  Suppose $G_1$ and $G_2$ are Lie
  groups with closed subgroups $H_1$ and $H_2$. Assume that there is
  an isomorphism of complexified Lie algebras
\begin{equation}i\colon {\mathfrak g}_1 \buildrel{\sim}\over{\longrightarrow}
  {\mathfrak g}_2, \qquad i({\mathfrak h}_1) = {\mathfrak h}_2.
  \end{equation}
Finally, assume that $i$ identifies the Zariski closure of $\Ad(H_1)$
in $\Aut({\mathfrak g}_1)$ with the Zariski closure of $\Ad(H_2)$ in
$\Aut({\mathfrak g}_2)$. (This is automatic if $H_1$ and $H_2$ are
connected.) Then we say that the homogeneous space $G_2/H_2$ is {\em
  another real form} of the homogeneous space $G_1/H_1$.

Given representations $(\tau_i,V_{\tau_i})$ of $H_i$, we say that
${\mathcal V}_{\tau_2}$ is {\em another real form} of ${\mathcal
  V}_{\tau_1}$ if there is an isomorphism
\begin{equation}i\colon V_{\tau_1} \buildrel{\sim}\over{\longrightarrow}
V_{\tau_2}\end{equation}
respecting the actions of ${\mathfrak h}$, and identifying the
Zariski closure of $\Ad(H_1)$ in $\End(V_{\tau_1})$ with the Zariski
closure of $\Ad(H_2)$ in $\End(V_{\tau_2})$.

Whenever ${\mathcal V}_{\tau_2}$ is another real form of ${\mathcal
  V}_{\tau_1}$, we get an algebra isomorphism
\begin{equation}\label{eq:realformisom}
i\colon {\mathbb D}^{\tau_1}(G_1/H_1)
\buildrel{\sim}\over{\longrightarrow} {\mathbb D}^{\tau_2}(G_2/H_2).
\end{equation}
\end{subequations}
\end{definition}
We will use these isomorphisms together with results like Corollaries
\ref{cor:Cdiff}--\ref{cor:bigG2diff} (proven using compact homogeneous
spaces $G_1/H_1$) to control the possible representations appearing in
some noncompact homogeneous spaces $G_2/H_2$.

\section{Changing the size of the group}
\label{sec:size}
\setcounter{equation}{0}

Our second way to study representations and invariant differential
operators is this.
\begin{subequations}\label{se:size}
In the setting \eqref{se:invt}, suppose that $S\subset G$ is a closed
subgroup, and that
\begin{equation}\label{eq:bigsub1}
  \dim G/H = \dim S/(S\cap H).
\end{equation}
Equivalent requirements are
\begin{equation}\label{eq:bigsub2}
  {\mathfrak s}/({\mathfrak s}\cap {\mathfrak h}) = {\mathfrak
    g}/{\mathfrak h}
\end{equation}
or
\begin{equation}\label{eq:bigsub3}
  {\mathfrak s} + {\mathfrak h} = {\mathfrak g}
\end{equation}
or
\begin{equation}\label{eq:bigsub4}
  \hbox{$S/(S\cap H)$ is open in $G/H$}.
\end{equation}
Because of this open embedding, differential operators on $S/(S\cap
H)$ are more or less the same thing as differential operators on
$G/H$. The condition of $S$-invariance is weaker than the condition of
$G$-invariance, so we get natural inclusions
\begin{equation}\label{eq:opincl}
  {\mathbb D}(G/H) \hookrightarrow {\mathbb D}(S/(S\cap H)), \qquad {\mathbb
    D}^\tau(G/H) \hookrightarrow {\mathbb D}^\tau(S/(S\cap H)).
  \end{equation}
(notation as in \eqref{se:invt}). In terms of the
algebraic description of these operators given in Proposition
\ref{prop:invt}, notice first that the condition in \eqref{eq:bigsub2}
shows that the inclusion ${\mathfrak s} \hookrightarrow {\mathfrak g}$
defines an isomorphism
\begin{equation}
  U({\mathfrak s})\otimes_{{\mathfrak s}\cap{\mathfrak h}}\End(V_\tau)
  \simeq U({\mathfrak g})\otimes_{{\mathfrak h}}\End(V_\tau)
\end{equation}
Therefore
\begin{equation}\begin{aligned}
 \   [U({\mathfrak g})\otimes_{{\mathfrak
          h}}\End(V_\tau)]^{(\Ad\otimes \Ad)(H)} &\hookrightarrow
    [U({\mathfrak g})\otimes_{{\mathfrak
          h}}\End(V_\tau)]^{(\Ad\otimes\Ad)(S\cap H)} \\
  &\simeq [U({\mathfrak s})\otimes_{{\mathfrak s}\cap{\mathfrak
        h}}\End(V_\tau)]^{(\Ad\otimes\Ad)(S\cap H)}.
  \end{aligned} \end{equation}
That is,
\begin{equation}\label{eq:algincl}
  I^\tau(G/H) \hookrightarrow I^\tau(S/(S\cap H)).
\end{equation}
This algebra inclusion corresponds to the differential operator
inclusion \eqref{eq:opincl} under the identification of Proposition
\ref{prop:invt}.
\end{subequations} 

Here is a useful fact.

\begin{proposition} Let $G$ be a connected reductive Lie group,
and let $H$ and $S$ be closed connected reductive subgroups.
Assume the equivalent conditions (\ref{eq:bigsub1})-(\ref{eq:bigsub4}). Then
\begin{enumerate}
  \item $G=SH$, and
\item there is a Cartan involution for $G$ preserving both $S$ and
  $H$.
\end{enumerate}
\end{proposition}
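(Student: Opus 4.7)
My plan is to prove (1) first, and derive (2) from it as a short conjugacy argument. The hypothesis $\mathfrak{s}+\mathfrak{h}=\mathfrak{g}$ makes the multiplication map $\mu\colon S\times H\to G$, $(s,h)\mapsto sh$, a submersion at $(e,e)$, hence (by translation) everywhere, so $SH$ is open in $G$. Because $G$ is connected, part (1) reduces to showing $SH$ is also closed.

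The closedness is trivial in the compact case: $S\times H$ is compact, so $SH$ is the continuous image of a compact set. To reduce the general reductive case to the compact one, I would produce a single Cartan involution $\theta$ of $G$ preserving both $S$ and $H$. Existence of such $\theta$ should come from a fixed-point argument on the non-positively curved symmetric space $X=G/K$: by Mostow's theorem the subsets $X_H,X_S\subset X$ of Cartan involutions of $G$ preserving $H$ and $S$ respectively are non-empty, closed, totally geodesic, and isometric to the symmetric spaces of $H$ and $S$, and the Lie-algebra sum condition forces $X_H\cap X_S\ne\emptyset$. With such $\theta$ in hand, one gets $\theta$-stable decompositions $\mathfrak{g}=\mathfrak{k}\oplus\mathfrak{p}$, $\mathfrak{s}=\mathfrak{k}_S\oplus\mathfrak{p}_S$, $\mathfrak{h}=\mathfrak{k}_H\oplus\mathfrak{p}_H$, and the sum condition splits as $\mathfrak{k}=\mathfrak{k}_S+\mathfrak{k}_H$ and $\mathfrak{p}=\mathfrak{p}_S+\mathfrak{p}_H$. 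The compact case of (1) applied inside the connected compact group $K$ gives $K=K_SK_H$; combining this with the polar decomposition $G=K\exp(\mathfrak{p})$ and an Onishchik-type manipulation on the $\exp(\mathfrak{p})$ side gives $G=SH$.

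For (2), assuming (1), the argument is a short conjugacy manipulation. By Mostow's theorem there exist Cartan involutions $\theta_H,\theta_S$ of $G$ preserving $H,S$ respectively. Since all Cartan involutions of $G$ are $G$-conjugate, $\theta_S=\mathrm{Inn}(g)\,\theta_H\,\mathrm{Inn}(g)^{-1}$ for some $g\in G$. By (1) write $g=sh$ with $s\in S$, $h\in H$, and set $\theta:=\mathrm{Inn}(h)\,\theta_H\,\mathrm{Inn}(h)^{-1}$. Since $h\in H$ and $\theta_H(H)=H$, $\theta$ preserves $H$; and since $\theta=\mathrm{Inn}(s)^{-1}\,\theta_S\,\mathrm{Inn}(s)$ with $s\in S$ and $\theta_S(S)=S$, $\theta$ preserves $S$ as well.

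The main obstacle is the intersection step $X_H\cap X_S\ne\emptyset$ in the proof of (1). Converting the algebraic hypothesis $\mathfrak{s}+\mathfrak{h}=\mathfrak{g}$ into geometric non-emptiness is exactly where both reductive hypotheses enter essentially, and one must formulate it cleanly — e.g.\ via Cartan's fixed-point theorem applied to a suitable compact subgroup acting on $X$, or a barycenter/convexity argument on totally geodesic sub-symmetric spaces — in order to avoid circularly using (2) to prove (1). A secondary subtlety is the rearrangement on the $\exp(\mathfrak{p})$ side needed to conclude $G=SH$ from $K=K_SK_H$ and $\mathfrak{p}=\mathfrak{p}_S+\mathfrak{p}_H$; the cleanest route here is to invoke Onishchik's decomposition theorem directly rather than attempt a bare-hands BCH calculation.
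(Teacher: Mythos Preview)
Your argument for (2) is correct and is exactly what the paper does: cite Mostow to get $\theta_H$ and $\theta_S$, write the conjugating element $g$ as $sh$ using (1), and take the $h$-conjugate of $\theta_H$ (equivalently the $s^{-1}$-conjugate of $\theta_S$).

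For (1), however, your direct approach has a genuine gap. The step ``the Lie-algebra sum condition forces $X_H\cap X_S\ne\emptyset$'' is precisely the statement of (2), and you have not given an independent argument for it; the suggestions (Cartan fixed-point theorem, barycenter/convexity) are vague and it is not clear what compact group you would apply a fixed-point theorem to, nor why convexity of totally geodesic submanifolds in a CAT(0) space would force them to meet. You yourself flag this as the ``main obstacle'' and as a potential circularity. The secondary step --- getting $G=SH$ from $K=K_SK_H$ and $\mathfrak{p}=\mathfrak{p}_S+\mathfrak{p}_H$ via polar decomposition --- is also not straightforward, since $\exp(\mathfrak{p}_S)\exp(\mathfrak{p}_H)$ need not equal $\exp(\mathfrak{p})$.

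The paper's route is the one you mention as a fallback: it simply cites Onishchik's decomposition theorem \cite{Oni69}*{Theorem 3.1} for (1), and then derives (2) from (1) exactly as you do. So your overall logical structure (prove (1), then deduce (2) by conjugacy) and your proof of (2) match the paper; only your attempt to prove (1) from scratch should be replaced by the Onishchik citation.
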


\begin{proof} Part (1) is due to Onishchik \cite{Oni69}*{Theorem 3.1}.
For (2), since $H$ is reductive in $G$, there is a Cartan
involution $\theta_H$ for $G$ preserving $H$, and likewise there is
one $\theta_S$ preserving $S$. By the uniqueness of Cartan involutions
for $G$, $\theta_S$ is the conjugate of $\theta_H$ by some element $g\in G$,
which by (1) can be decomposed as $g=sh$. The $h$-conjugate of $\theta_H$,
which is also the $s^{-1}$-conjugate of $\theta_S$, has the required property.
\end{proof}

It follows from (1) that if $(G_c,S_c,H_c)$ is a triple of a compact Lie group
and two closed subgroups such that $G_c=S_cH_c$, and if $(G,S,H)$ is a triple
of real forms (that is, $G/S$ is a real form of $G_c/S_c$ and
$G/H$ a real form of $G_c/H_c$), then $S$ acts transitively on $G/H$.
Conversely, by (2) every transitive action on a reductive homogeneous space
$G/H$ by a reductive subgroup $S\subset G$ is obtained in this way.

In the following sections we shall apply this principle to the real hyperboloid
(\ref{eq:Hpq}), which is a real form of $S^{p+q-1}=O(p+q)/O(p+q-1)$.

The hypothesis that both $S$ and $H$ be reductive is certainly
necessary. Suppose for example that $S$ is a noncompact
real form of the complex reductive group $G$, and that
$H$ is a parabolic subgroup of $G$ (so that $S$ and $G$ are reductive,
but $H$ is not). Then $S$ has finitely many orbits on $G/H$
(\cite{Wolfflag}), and in particular has open orbits (so that the
conditions \eqref{eq:bigsub1}--\eqref{eq:bigsub4} are satisfied); but
the number of orbits is almost always greater than one (so $G \ne SH$).

\section{Classical hyperboloids}
\label{sec:Opq}
\setcounter{equation}{0}

\begin{subequations}\label{se:Opq}
In this section we recall the classical representation-theoretic
decomposition of functions on real hyperboloids: that is, on other
real forms of spheres. The spaces are
\begin{equation}\label{eq:Hpq}
  H_{p,q} = \{v\in {\mathbb R}^{p,q} \mid \langle
    v,v\rangle_{p,q} = 1\} = O(p,q)/O(p-1,q).
\end{equation}
Here $\langle,\rangle_{p,q}$ is the standard quadratic form of
signature $(p,q)$ on ${\mathbb R}^{p+q}$. The inclusion of the right
side of the equality in the middle is just given by the action of the
orthogonal group on the basis vector $e_1$; surjectivity is Witt's
theorem.  This realization of the hyperboloid is a symmetric space, so
the Plancherel decomposition is completely known. In particular, the
discrete series may be described as follows. To avoid degenerate
cases, we assume that
\begin{equation}\label{eq:Obigp}
  p \ge 2.
\end{equation}
There is a ``compact Cartan subspace'' with Lie algebra
\begin{equation}\label{eq:cptCartanOpq}
  {\mathfrak a}_c = \langle e_{12} - e_{21} \rangle.
\end{equation}
The first requirement is that
\begin{equation}
  {\mathfrak a}_c \subset {\mathfrak k} = {\mathfrak o}(p) \times
  {\mathfrak o}(q).
\end{equation}
That this is satisfied is a consequence of \eqref{eq:Obigp}. The
second requirement is that ${\mathfrak a}_c$ belongs to the $-1$
eigenspace of the involutive automorphism
\begin{equation}
  \sigma = \Ad\left(\diag(-1,1,1,\dots,1) \right)
\end{equation}
with fixed points the isotropy subgroup $O(p-1,q)$. (More precisely,
the group of fixed points of $\sigma$ is $O(1) \times O(p-1,q)$; so
our hyperboloid is a $2$-to-$1$ cover of the algebraic symmetric space
$O(p,q)/[O(1)\times O(p-1,q)]$. But the references also treat analysis on
this cover.)

For completeness, we mention that whenever
\begin{equation}\label{eq:Obigq}
  q \ge 1.
\end{equation}
there is another conjugacy class of Cartan subspace, represented by
\begin{equation}\label{eq:splCartanOpq}
  {\mathfrak a}_s = \langle e_{1,p+1} + e_{p+1,1} \rangle.
\end{equation}
This one is split, and corresponds to the continuous part of the
Plancherel formula.

The discrete series for the symmetric space $H_{p,q}$ is constructed
as follows. Using the compact Cartan subspace ${\mathfrak a}_c$,
construct a $\theta$-stable parabolic
\begin{equation}\label{eq:qOpq}
  {\mathfrak q}^{O(p,q)} = {\mathfrak l}^{O(p,q)} + {\mathfrak u}^{O(p,q)} \subset
  {\mathfrak o}(p+q,{\mathbb C});
\end{equation}
the corresponding Levi subgroup is
\begin{equation}
  L^{O(p,q)} = [O(p,q)]^{{\mathfrak a}_c} = SO(2) \times O(p-2,q)
\end{equation}
We will need notation for the characters of $SO(2)$:
\begin{equation}
  \widehat{SO(2)} = \{\chi_\ell \mid \ell \in {\mathbb Z}\}.
\end{equation}

The discrete series consists of certain irreducible representations
\begin{equation}
  A_{{\mathfrak q}^{O(p,q)}}(\lambda), \qquad \lambda\colon L^{O(p,q)} \rightarrow
  {\mathbb C}^\times.
\end{equation}
The allowed $\lambda$ are (first) those trivial on
\begin{equation}
  L^{O(p,q)} \cap O(p-1,q) = O(p-2,q).
\end{equation}
These are precisely the characters of $SO(2)$, and so are indexed by
integers $\ell \in {\mathbb Z}$. Second, there is a positivity
requirement
\begin{equation}
  \ell + (p+q-2)/2 > 0.
\end{equation}
We write
\begin{equation}\label{eq:OAq}\begin{aligned}
\lambda(\ell) &=_{\text{def}} \chi_\ell \otimes 1 \colon L^{O(p,q)}\rightarrow
       {\mathbb C}^\times, \\ \pi^{O(p,q)}_\ell &= A_{{\mathfrak
           q}^{O(p,q)}}(\lambda_\ell) \qquad 
  (\ell > (2-p-q)/2).
\end{aligned}\end{equation}
The infinitesimal character of this representation is
\begin{equation}
  \text{infl char}(\pi_\ell^{O(p,q)}) = (\ell +
  (p+q-2)/2, (p+q-4)/2, (p+q-6)/2,\cdots).
  \end{equation}
The discrete part of the Plancherel decomposition is
\begin{equation}\label{eq:Opqdisc}
  L^2(H_{p,q})_{\text{disc}} = \sum_{\ell > -(p+q-2)/2}
    \pi_\ell^{O(p,q)}.
\end{equation}
This decomposition appears in \cite{StrH}*{page 360}, and
\cite{RossH}*{page 449, Theorem 10, and page 471}. What Strichartz
calls $N$ and $n$ are for us $p$ and $q$; his $d$ is our $\ell$. What
Rossmann calls $q$ and $p$ are for us $p$ and $q$; his $\nu -\rho$ is
our $\ell$; and his $\rho$ is $(p+q-2)/2$. The
identification of the representations as cohomologically induced
may be found in \cite{Virr}*{Theorem 2.9}.
\end{subequations} 

\begin{subequations} \label{se:Opqorbit}
Here is the orbit method perspective. Just as for $O(n)$, we use a
trace form to identify ${\mathfrak g}_0^*$ with ${\mathfrak
  g}_0$. We find
\begin{equation}
  ({\mathfrak g}_0/{\mathfrak h}_0)^* \simeq {\mathbb R}^{p-1,q},
\end{equation}
respecting the action of $H=O(p-1,q)$. The orbits of $H$ of largest
dimension are given by the value of the quadratic form: positive for
the orbits represented by nonzero elements $x(e_{12}-e_{21})$ of the
compact Cartan subspace of \eqref{eq:cptCartanOpq}; negative for
nonzero elements of the split Cartan subspace
$y(e_{1,p+1}+ e_{p+1,1})$; and zero for the nilpotent element
$(e_{12}-e_{21}+e_{1,p+1}+e_{p+1,1})$.

Define
\begin{equation}\begin{aligned}
    \ell_{\text{orbit}} &= \ell + (n-2)/2,\\
    \lambda(\ell_{\text{orbit}}) &=
    \ell_{\text{orbit}}\cdot((e_{12}-e_{21})/2).
  \end{aligned}
\end{equation}
Then the coadjoint orbits for discrete series have representatives
in the compact Cartan subspace
\begin{equation}
  \pi_\ell^{O(p,q)} = \pi(\text{orbit}\ \lambda(\ell_{\text{orbit}})).
\end{equation}
Now this representation is an irreducible unitary cohomologically
induced representation whenever
\begin{equation}
  \ell_{\text{orbit}} > 0 \iff \ell > -(n-2)/2.
\end{equation}
One of the advantages of the orbit method picture is that the
condition $\ell_{\text{orbit}} >0$ is simpler than the one
$\ell>(-(n-2)/2)$ arising from more straightforward representation
theory as in \eqref{eq:OAq}. Of course we always need also the
integrality condition
\begin{equation}
  \ell_{\text{orbit}} \equiv (n-2)/2 \pmod{\mathbb Z} \iff \ell \equiv 0
  \pmod{\mathbb Z}.
  \end{equation}
\end{subequations} 

\begin{subequations} \label{se:Opqcont}
For completeness we mention also the continuous part of the Plancherel
decomposition. The split Cartan subspace ${\mathfrak a}_s$ (defined
above as long as $p$ and $q$ are each at least $1$) gives rise to a
real parabolic subgroup
\begin{equation}\label{eq:POpq}
  P^{O(p,q)} = M^{O(p,q)} A_s N^{O(p,q)}, \qquad M^{O(p,q)} = \{\pm
  1\} \times O(p-1,q-1).
\end{equation}
Here $A_s = \exp({\mathfrak a}_s) \simeq {\mathbb R}$, and $\{\pm 1\}$
is
$$O(1)_\Delta \subset O(1)\times O(1) \subset O(1,1);$$
we have
$$\{\pm 1\} \times A_s = SO(1,1) \simeq {\mathbb R}^\times,$$
an algebraic split torus. Therefore
\begin{equation}\label{eq:POpqB}
  P^{O(p,q)} = SO(1,1)\times O(p-1,q-1)\times N^{O(p,q)}.
\end{equation}

The characters of $SO(1,1)$ are
\begin{equation}
  \widehat{SO(1,1)} = \{\chi_{\epsilon,\nu} \mid \epsilon \in {\mathbb
    Z}/2{\mathbb Z}, \nu \in {\mathbb C}\}, \qquad
  \chi_{\epsilon,\nu}(r) = |r|^\nu \cdot \sgn(r)^\epsilon.
\end{equation}
We define
\begin{equation}
  \pi_{\epsilon,\nu}^{O(p,q)} =
  \Ind_{P^{O(p,q)}}^{O(p,q)}\left(\chi_{\epsilon,\nu}\otimes
  1_{O(p-1,q-1)}\otimes 1_{N^{O(p,q)}}\right).
\end{equation}
Here (in contrast to the definition of discrete series
$\pi^{O(p,q)}_\ell$) we use normalized induction, with a $\rho$
shift. As a consequence, the infinitesimal character of this
representation is
\begin{equation}
  \text{infl char}(\pi_{\epsilon,\nu}^{O(p,q)}) = (\nu, (p+q-4)/2,
  (p+q-6)/2,\cdots);
  \end{equation}
The continuous part of the Plancherel decomposition is
\begin{equation}\label{eq:Opqcont}
  L^2(H_{p,q})_{\text{cont}} = \sum_{\epsilon \in {\mathbb
      Z}/2{\mathbb Z}} \int_{\nu\in i{\mathbb R_{\ge 0}}}
    \pi_{\epsilon,\nu}^{O(p,q)}.
\end{equation}
Just as for the discrete part of the decomposition, {\em all} (not
just almost all) of the representations $\pi_{\epsilon,\nu}^{O(p,q)}$
are irreducible (always for $\nu \in i{\mathbb R}$).

There is an orbit-theoretic formulation of these parameters as well,
corresponding to elements $-i\nu\cdot(e_{1,p+1} + e_{p+1,1})/2$ of the split
Cartan subspace. We omit the details.
\end{subequations} 

\begin{subequations}\label{se:OKbranch}
We will need to understand the restriction of $\pi^{O(p,q)}_\ell$ to
the maximal compact subgroup
\begin{equation}
  K = O(p)\times O(q) \subset O(p,q).
\end{equation}
This computation requires knowing
\begin{equation}
  L^{O(p,q)} \cap K = SO(2)\times O(p-2)\times O(q), \qquad {\mathfrak
    u\cap s} = \chi_{1}\otimes 1 \otimes {\mathbb C}^{q};
\end{equation}
here ${\mathfrak g} = {\mathfrak k}\oplus {\mathfrak s}$ is the
complexified Cartan decomposition. Consequently
\begin{equation}
  S^m({\mathfrak u\cap s}) = \chi_m \otimes 1 \otimes S^m({\mathbb
    C}^{q}) = \sum_{0\le k \le m/2} \chi_m \otimes 1 \otimes \pi^{O(q)}_{m-2k}.
\end{equation}
Now an analysis of the Blattner formula for restricting
cohomologically induced representations to $K$ gives
\begin{equation}\label{eq:Obranch}
  \pi_\ell^{O(p,q)}|_{O(p)\times O(q)} =
    \sum_{m=0}^\infty \quad \sum_{0\le k \le m/2}\pi^{O(p)}_{m+\ell + q}
    \otimes \pi^{O(q)}_{m-2k}.
\end{equation}
If $p$ is much larger than $q$, then
some of the parameters for representations of $O(p)$ are negative.
Those representations should be understood to be zero.

A description of the restriction to $O(p)\times O(q)$ is in
\cite{RossH}*{Lemma 11}. In Rossmann's coordinates, what is written is
$$ \begin{aligned}
 \{ \pi_m^{O(p)} \otimes \pi_n^{O(q)} &\mid -(m+(p-2)/2) +
 (n+(q-2)/2) \ge \nu, \\ m+n &\equiv \nu -\rho - p \pmod{2}\}.
 \end{aligned}$$
Converting to our coordinates as explained after \eqref{eq:Opqdisc}
gives
\begin{equation}\begin{aligned}
\{ \pi_m^{O(p)} \otimes \pi_n^{O(q)} &\mid (m+(p-2)/2) -
(n+(q-2)/2) \ge \nu, \\
m-n &\equiv \nu -\rho + q \pmod{2}\},
\end{aligned}\end{equation}
or equivalently
\begin{equation}
\{ \pi_m^{O(p)} \otimes \pi_n^{O(q)} \mid m - n \ge \ell + q-1, \quad
m-n \equiv \ell + q \pmod{2}\}.
\end{equation}
The congruence condition makes the inequality into
$$m-n \ge \ell + q,$$
which matches the description in \eqref{eq:Obranch}

Finally, we record the easier formulas
\begin{equation}\label{eq:Ocontbranch}
  \pi_{\epsilon,\nu}^{O(p,q)}|_{O(p)\times O(q)} =
    \sum_{\substack{m,m' \ge 0\\ m-m' \equiv \epsilon \pmod{2}}} \pi^{O(p)}_m
    \otimes \pi^{O(q)}_{m'}.
\end{equation}
\end{subequations} 

\section{Hermitian hyperboloids}
\label{sec:Upq}
\setcounter{equation}{0}

\begin{subequations}\label{se:Upq}
In this section we see what the ideas from Sections \ref{sec:invt} and
\ref{sec:Opq} say about the discrete series of the non-symmetric
spherical spaces
\begin{equation}
  H_{2p,2q} = \{v\in {\mathbb C}^{p,q} \mid \langle
    v,v\rangle_{p,q} = 1\} = U(p,q)/U(p-1,q).
\end{equation}
Here $\langle,\rangle_{p,q}$ is the standard Hermitian form of
signature $(p,q)$ on ${\mathbb C}^{p+q}$. The inclusion of the right side
in the middle is just given by the action of the
unitary group on the basis vector $e_1$; surjectivity is Witt's
theorem for Hermitian forms. These discrete series were
completely described by Kobayashi in \cite{Kob}*{Theorem 6.1}.

To simplify many formulas, we write in this section
\begin{equation}\label{eq:Unpq}
  n = p+q.
\end{equation}

Our approach (like Kobayashi's) is to restrict the discrete series
representations $\pi_\ell^{O(2p,2q)}$ of \eqref{eq:Opqdisc} to
$U(p,q)$.

We should mention at this point that the homogeneous space
$U(n)/U(n-1)$ has another noncompact real form $GL(n,{\mathbb
  R})/GL(n-1,{\mathbb R})$, arising from the inclusion
\begin{equation}
  GL(n,{\mathbb R}) \hookrightarrow O(n,n)
\end{equation}
as a real Levi subgroup. For this real form (as Kobayashi observes)
the discrete series representations $\pi_\ell^{O(n,n)}$ decompose
continuously on restriction to $GL(n,{\mathbb R})$, and consequently
this homogeneous space has no discrete series. (More precisely, the
character $x-y$ of the center of $U(1)$ of $U(p,q)$ (an integer)
appearing in the analysis below must be replaced by a character of the
center ${\mathbb R}^\times$ of $GL(n,{\mathbb R})$ (a real number and
a sign).)

We begin by computing the restriction to $U(p)\times U(q)$.
What is good about this is that the representations of $O(2p)$ and
$O(2q)$  appearing in 
\eqref{eq:Obranch} are representations appearing in the action of $O$
on spheres. We already computed (in Theorem \ref{thm:OUcptbranch}) how
those branch to unitary groups. The conclusion is
\begin{small}\begin{equation}\label{e:OUbranch}
    \pi^{O(2p,2q)}_{_\ell}|_{U(p)\times U(q)} = 
    \sum_{\substack{0\le b,c \\[.2ex] b+c \ge \ell+2q}} \quad
    \sum_{\substack{0\le b',c' \\[.2ex] b'+c' \le b+c -\ell-2q
        \\[.2ex] b'+c'
        \equiv b+c-\ell \pmod{2}}}
\pi^{U(p)}_{b,c}\otimes \pi^{U(q)}_{b',c'}.
\end{equation}\end{small}
\end{subequations} 

This calculation, together with Corollary \ref{cor:Cdiff}, proves most of
\begin{proposition}\label{prop:OUbranch} Suppose $p$ and $q$ are
  nonnegative integers, each at least two; and suppose $\ell >
  -(n-1)$. Then the  restriction of the discrete series representation
  $\pi^{O(2p,2q)}_\ell$ to $U(p,q)$ is the direct sum of
  the one-parameter family of representations
  $$\pi^{U(p,q)}_{x,y},\quad 
  x,y \in {\mathbb Z}, \quad x+y = \ell.$$
  The infinitesimal character of $\pi^{U(p,q)}_{x,y}$ corresponds to
  the weight
  $$(x+(n-1)/2,(n-3)/2,\ldots,-(n-3)/2, -y-(n-1)/2).$$
  Restriction to the maximal compact subgroup is
  $$\pi^{U(p,q)}_{x,y}|_{U(p)\times U(q)} = \sum_{r,s\ge
    0}\sum_{k=0}^{\min(r,s)} \pi^{U(p)}_{x+q+r,y+q+s}\otimes
  \pi^{U(q)}_{s-k,r-k}.$$
  If one of the two subscripts in a $U(p)$ representation is negative,
  that term is to be interpreted as zero.

  Each of the representations $\pi^{U(p,q)}_{x,y}$ is irreducible.
\end{proposition}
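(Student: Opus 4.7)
Plan of proof. The strategy is to restrict $\pi^{O(2p,2q)}_\ell$ first to the maximal compact $U(p)\times U(q)\subset U(p,q)$ via the branching rule of Section \ref{sec:R}, then separate the result by eigenvalues of the center $U(1)\subset U(p,q)$, and finally identify each eigenspace with a cohomologically induced $A_{\mathfrak q}(\lambda)$ module.

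\textbf{Step 1 ($K$-type decomposition).} I would start from the $O(2p)\times O(2q)$-decomposition \eqref{eq:Obranch} and apply Theorem \ref{thm:OUcptbranch} separately to each factor $\pi^{O(2p)}_m$ and $\pi^{O(2q)}_{m'}$. Re-indexing the resulting double sum yields formula \eqref{e:OUbranch}. Next, the central $U(1)\subset U(p,q)$ acts on the $K$-type $\pi^{U(p)}_{b,c}\otimes\pi^{U(q)}_{b',c'}$ by the integer $(b-c)+(b'-c')$, and since it commutes with all of $U(p,q)$ (and with $U(p-1,q)$) this eigenspace decomposition is actually a decomposition into $(\mathfrak{u}(p,q),K)$-submodules. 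I would introduce the reparametrization
\begin{equation*}
b=x+q+r,\quad c=y+q+s,\quad b'=s-k,\quad c'=r-k,\qquad x+y=\ell,
\end{equation*}
and check directly that the inequalities and parity condition in \eqref{e:OUbranch}, together with $b,b',c,c'\ge 0$, translate to $r,s\ge 0$, $0\le k\le\min(r,s)$ (with negative-subscript $U(p)$-factors interpreted as zero), while $(b-c)+(b'-c')=x-y$. This proves the central-character decomposition and the $K$-type formula for each summand $\pi^{U(p,q)}_{x,y}$.

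\textbf{Step 2 (infinitesimal character).} I would then invoke Corollary \ref{cor:Cdiff}: since $\pi^{U(p,q)}_{x,y}$ sits inside $\pi^{O(2p,2q)}_\ell$ and, via the realization of the latter in $L^2(H_{2p,2q})=L^2(U(p,q)/U(p-1,q))$, carries a $U(p-1,q)$-invariant distribution vector, its infinitesimal character has the form $(\xi,(n-3)/2,\ldots,-(n-3)/2,-\tau)$ with $\xi-\tau$ equal to the central character $x-y$ and $\xi+\tau=\alpha=\ell+(n-1)$, the orthogonal parameter of $\pi^{O(2p,2q)}_\ell$. Solving gives $\xi=x+(n-1)/2$ and $\tau=y+(n-1)/2$, matching the claim.

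\textbf{Step 3 (irreducibility—the main obstacle).} The analysis so far produces only a decomposition by central character, not a decomposition into $U(p,q)$-irreducibles. The plan is to identify each $\pi^{U(p,q)}_{x,y}$ with a cohomologically induced module $A_{\mathfrak{q}(x,y)}(\lambda(x,y))$ built from a $\theta$-stable parabolic of $\mathfrak{u}(p,q)$ with Levi $U(1)\times U(p-1,q)$, the orientation of the $U(1)$-character being governed by the sign of $x-y$. The identification is pinned down by matching the infinitesimal character of Step 2 against \eqref{eq:cohindD} and by locating a lowest $K$-type, read off from the $K$-type formula at the minimal admissible $(r,s,k)$ (namely $(0,0,0)$ when $x+q,y+q\ge 0$). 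The hypothesis $\ell>-(n-1)$ keeps the Vogan--Zuckerman parameter in a range where $A_{\mathfrak{q}(x,y)}(\lambda(x,y))$ is nonzero, irreducible, and unitary; once this is known, the $K$-multiplicity formula of Step 1 together with the infinitesimal-character rigidity of Step 2 forces the $(x,y)$-eigenspace to coincide with this irreducible module. Alternatively, one may short-circuit this step by directly citing Kobayashi's classification \cite{Kob}*{Theorem 6.1} of the discrete series for $U(p,q)/U(p-1,q)$.
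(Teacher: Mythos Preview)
Your Steps 1 and 2 are exactly what the paper does: decompose $\pi^{O(2p,2q)}_\ell$ over $U(p)\times U(q)$ via \eqref{eq:Obranch} and Theorem \ref{thm:OUcptbranch}, sort the result by the central $U(1)$-character $(b-c)+(b'-c')=x-y$, and read off the infinitesimal character from Corollary \ref{cor:Cdiff}. The reparametrization you propose is the same bookkeeping the paper gestures at when it says one ``just needs to check that the same representations of $U(p)\times U(q)$ appear.''

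Step 3 has a real problem. A $\theta$-stable parabolic with Levi $U(1)\times U(p-1,q)$ is wrong: cohomological induction from a one-dimensional character of such an $L$ produces infinitesimal characters of the form
\[
(\xi,\ (n-2)/2,\ (n-4)/2,\ \ldots,\ -(n-2)/2),
\]
i.e.\ an arithmetic progression in the last $n-1$ slots with a single free parameter $\xi$. The target infinitesimal character $(x+(n-1)/2,(n-3)/2,\ldots,-(n-3)/2,-y-(n-1)/2)$ has \emph{two} free coordinates, and the last entry $-y-(n-1)/2$ generically breaks the arithmetic progression. No choice of ``orientation of the $U(1)$-character'' fixes this. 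The paper (following Kobayashi) instead uses three parabolics with Levi factors of the shape $U(1)\times U(1)\times U(n-2)$, namely
\[
U(1)_p\times U(1)_q\times U(p-1,q-1),\quad U(1)_p\times U(p-2,q)\times U(1)_p,\quad U(p-1,q-1)\times U(1)_q\times U(1)_p,
\]
chosen according to whether $x_{\text{orbit}}>0>y_{\text{orbit}}$, $x_{\text{orbit}}>y_{\text{orbit}}>0$, or $0>x_{\text{orbit}}>y_{\text{orbit}}$.

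A second, smaller gap: even with the correct parabolics, your blanket assertion that $\ell>-(n-1)$ puts $\lambda$ in a range where $A_{\mathfrak q}(\lambda)$ is automatically irreducible is too optimistic. These parameters are only \emph{weakly fair}, and irreducibility in the weakly fair range is not free. The paper invokes the results of \cite{Vunit} together with \cite{VGLn}*{Section 16}, the key point being that in type $A$ the relevant moment map is birational onto its image; this is what upgrades ``irreducible or zero as a ${\mathcal D}$-module'' to ``irreducible as a $U({\mathfrak g})$-module.'' Your alternative of simply citing Kobayashi \cite{Kob}*{Theorem 6.1} is fine and is essentially what the paper does for the identification step, but you should be aware that the paper still supplies this extra irreducibility argument on top of Kobayashi's identification.
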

The ``one parameter'' referred to in the proposition is $x-y$; the
pair $(x,y)$ can be thought of as a single parameter because of the
constraint $x+y=\ell$. What we have done is sorted the representations
of $U(p)\times U(q)$ appearing in \eqref{e:OUbranch} according to the
character of the center $U(1)$ of $U(p,q)$; this character is
$(b-c)+(b'-c')$, and we call it $x-y$ in the rearrangement in
Proposition \ref{prop:OUbranch}. The corresponding representation of
$U(p,q)$ (the part of $\pi^{O(2p,2q)}_\ell$ where $U(1)$ acts by
$x-y$) is what we call $\pi^{U(p,q)}_{x,y}$. In order to prove most of
the proposition, we just need to check that the same representations
of $U(p)\times U(q)$ appear in \eqref{e:OUbranch} and in Proposition
\ref{prop:OUbranch}, and this is easy. We will prove the
irreducibility assertion (using \cite{Kob}) after \eqref{e:Aq-smallx}
below.

\begin{subequations}\label{se:Upqrep}
  Having identified the restriction to $U(p)\times U(q)$, we record
  for completeness Kobayashi's identification of the actual representations of
  $U(p,q)$. These come in three families, according to the values of
  the integers $x$ and $y$. The families are cohomologically induced
  from three $\theta$-stable parabolic subalgebras:
\begin{equation}
  {\mathfrak q}^{U(p,q)}_+ = {\mathfrak l}^{U(p,q)}_+ + {\mathfrak
    u}^{U(p,q)}_+  \subset {\mathfrak u}(n,{\mathbb C});
\end{equation}
with Levi subgroup
\begin{equation}
  L^{U(p,q)}_+ = U(1)_p\times U(1)_q \times U(p-1,q-1);
\end{equation}
\begin{equation}
  {\mathfrak q}^{U(p,q)}_0 = {\mathfrak l}^{U(p,q)}_0 + {\mathfrak
    u}^{U(p,q)}_0  \subset {\mathfrak u}(n,{\mathbb C});
\end{equation}
with Levi subgroup
\begin{equation}
  L^{U(p,q)}_0 = U(1)_p \times U(p-2,q)\times U(1)_p;
\end{equation}
and
\begin{equation}
  {\mathfrak q}^{U(p,q)}_- = {\mathfrak l}^{U(p,q)}_- + {\mathfrak
    u}^{U(p,q)}_-  \subset {\mathfrak u}(n,{\mathbb C});
\end{equation}
with Levi subgroup
\begin{equation}
  L^{U(p,q)}_- = U(p-1,q-1)\times U(1)_q \times U(1)_p.
\end{equation}
(We write $U(1)_p$ for a coordinate $U(1) \subset U(p)$, and $U(1)_q
\subset U(q)$ similarly. More complete descriptions of these
parabolics are in \cite{Kob}.)
Suppose first that
\begin{equation}\label{eq:bigx}
  x> \ell + (n-1)/2, \qquad y < -(n-1)/2.
\end{equation}
(Since $x+y=\ell$, these two inequalities are equivalent.) Write
$\xi_x$ for the character of $U(1)$ corresponding to $x\in {\mathbb Z}$.
Consider the one-dimensional character
\begin{equation}
  \lambda^+_{x,y} = \xi_x\otimes \xi_{-(y + n-2)} \otimes {\det}^1
\end{equation}
of $L^{U(p,q)}_+$. What Kobayashi proves in \cite{Kob}*{Theorem 6.1}
is
\begin{equation}
  \pi^{U(p,q)}_{x,y} = A_{{\mathfrak q}^{U(p,q)}_+}(\lambda^+_{x,y})
  \qquad (x > \ell + (n-1)/2).
\end{equation}
Suppose next that
\begin{equation}\label{eq:middlex}
 \ell + (n-1)/2 \ge x \ge -(n-1)/2, \qquad  -(n-1)/2\le y \le
 \ell + (n-1)/2.
\end{equation}
(Since $x+y=\ell$, these two pairs of inequalities are equivalent.)
Consider the one-dimensional character
\begin{equation}
  \lambda^0_{x,y} = \xi_x\otimes 1\otimes \xi_{-y}
\end{equation}
of $L^{U(p,q)}_0$. Kobayashi's result in \cite{Kob}*{Theorem 6.1}
is now
\begin{equation}
  \pi^{U(p,q)}_{x,y} = A_{{\mathfrak q}^{U(p,q)}_0}(\lambda^0_{x,y})
  \qquad (-(n-1)/2 \le x \le \ell + (n-1)/2).
\end{equation}
The remaining case is
\begin{equation}\label{eq:smallx}
  x < -(n-1)/2, \qquad y > \ell + (n-1)/2.
\end{equation}
(Since $x+y=\ell$, these two inequalities are equivalent.) Write
\begin{equation}
  \lambda^-_{x,y} = {\det}^{-1}\otimes \xi_{x+n-2}\otimes \xi_{-y}
\end{equation}
of $L^{U(p,q)}_-$. In this case Kobayashi proves
\begin{equation}\label{e:Aq-smallx}
  \pi^{U(p,q)}_{x,y} = A_{{\mathfrak q}^{U(p,q)}_-}(\lambda^-_{x,y})
  \qquad (x < -(n-1)/2).
\end{equation}
\end{subequations}

\begin{subequations} \label{se:Upqorbit}
Here is the orbit method perspective. Just as for $U(n)$, we use a
trace form to identify ${\mathfrak g}_0^*$ with ${\mathfrak
  g}_0$. The linear functionals vanishing on ${\mathfrak h}_0^*$ are
\begin{equation}
  \lambda(t,u,v) = \begin{pmatrix}it & \hskip -2ex u& \hskip -2ex
    v\\[.5ex] -\overline u &
    \\ \overline v & \text{\Large \hskip 3ex $0_{(n-1)\times (n-1)}$ \hskip
      -2ex}\\ \end{pmatrix} \simeq
         {\mathbb R} + {\mathbb C}^{p-1,q}
\end{equation}
with $t\in {\mathbb R}$, $u \in {\mathbb C}^{p-1}$, $v\in {\mathbb C}^q$.

The orbits of $H=U(p-1,q)$ of largest dimension are given by the real
number $t$, and the value of the Hermitian form on the vector $(u,v)$:
positive for the orbits represented by nonzero elements
$r(e_{12}-e_{21})$ (nonzero eigenvalues $i(t\pm a)/2$, with $a=(t^2 +
4r^2)^{1/2}$); negative for nonzero elements $s(e_{1,p+1}+ e_{p+1,1})$
(nonzero eigenvalues $i(t\pm a)/2$, with $a=(t^2 - 4s^2)^{1/2}$); and
zero for the nilpotent element $(e_{12}-e_{21}+e_{1,p+1}+e_{p+1,1})$
(two nonzero eigenvalues $it/2$).

Define
\begin{equation}
  \ell_{\text{orbit}} = \ell + (n-1), \quad x_{\text{orbit}} = x +
  (n-1)/2, \quad y_{\text{orbit}} = y + (n-1)/2.
  \end{equation}
The coadjoint orbits for discrete series have representatives
\begin{equation}
    \lambda(x_{\text{orbit}},y_{\text{orbit}}) = \begin{cases} ix_{\text{orbit}}e_1
    - iy_{\text{orbit}}e_{p+1} & x_{\text{orbit}} > 0 >
    y_{\text{orbit}}\\
    ix_{\text{orbit}}e_1  + (e_{2,p}-e_{p,2}) & \\
    \quad+ (e_{2,p+1}+e_{p+1,2}) & x_{\text{orbit}} > 0 =  y_{\text{orbit}}\\
    ix_{\text{orbit}}e_1 - iy_{\text{orbit}}e_{p} & x_{\text{orbit}} >
    y_{\text{orbit}} > 0\\
    iy_{\text{orbit}}e_p + (e_{1,2}-e_{2,1})&\\
   \quad +e_{1,p+1}+e_{p+1,1}) &  x_{\text{orbit}} = 0 > y_{\text{orbit}}\\
    ix_{\text{orbit}}e_p - iy_{\text{orbit}}e_{p+q} & 0 > x_{\text{orbit}} >
    y_{\text{orbit}}.
    \end{cases}
\end{equation}
Then
\begin{equation}
  \pi_{x,y}^{U(p,q)} = \pi(\text{orbit}\ \lambda(x_{\text{orbit}},y_{\text{orbit}})).
\end{equation}
(We have not discussed attaching representations to partly nilpotent
coadjoint orbits like $\lambda(x_{\text{orbit}},0)$ (with
$x_{\text{orbit}} >0$); suffice it to say that the definitions given
above using ${\mathfrak q}_0$ are reasonable ones. It would be equally
reasonable to use instead ${\mathfrak q}_+$. We will see in
\eqref{eq:Aq-big-middle-coincide} that this leads to the same
representation.)

In the orbit method picture the condition \eqref{eq:bigx} simplifies to
\begin{equation}\label{eq:bigxorbit}
  x_{\text{orbit}} >  y_{\text{orbit}} > 0.
\end{equation}
Similarly, \eqref{eq:smallx} becomes
\begin{equation}\label{eq:smallxorbit}
  x_{\text{orbit}} < y_{\text{orbit}}<0.
\end{equation}
Finally, \eqref{eq:middlex} is
\begin{equation}\label{eq:middlexorbit}
  x_{\text{orbit}} \ge 0 \ge  y_{\text{orbit}};
\end{equation}
equality in either of these inequalities is the case of partially
nilpotent coadjoint orbits.
In all cases we need also the genericity condition
\begin{equation}
  \ell_{\text{orbit}} >0 \iff \ell > -(n-1),
\end{equation}
and the integrality conditions
\begin{equation}
  x_{\text{orbit}} \equiv (n-1)/2 \pmod{\mathbb Z},  y_{\text{orbit}}
  \equiv (n-1)/2 \pmod{\mathbb Z}.
\end{equation}
\end{subequations} 

\begin{subequations}
Here now is a sketch of a proof of the irreducibility assertion from
Proposition \ref{prop:OUbranch}. Each of the cohomologically induced
representations above is in the
weakly fair range.  The general results for the weakly fair range of
\cite{Vunit} together with \cite{VGLn}*{Section 16} apply to show that
they are irreducible or zero.  The key point is that the moment map
for the cotangent bundle to a relevant partial flag variety is
birational onto its image. This is automatic in type $A$, which is why
the arguments in \cite{VGLn} for $GL(n,\mathbb{R})$ also apply to
$U(p,q)$.

We close with a comment about how the three series of derived functor
modules fit together.  If we relax the strict inequalities on $x$ (and
$y$) in \eqref{eq:bigx}, then we are at one edge of the weak
inequalities in \eqref{eq:middlex}.  For these values of $x$ and $y$
(which occur only when $n$ is odd), namely
\[
(x,y) = \left ( \ell + (n-1)/2, -(n-1)/2 \right),
\]
or equivalently
\[
(x_{\text{orbit}},y_{\text{orbit}}) = \left(
\ell_{\text{orbit}},0\right),
\]
we claim
\begin{equation}\label{eq:Aq-big-middle-coincide}
A_{{\mathfrak q}^{U(p,q)}_+}(\lambda^+_{x,y}) \simeq A_{{\mathfrak
    q}^{U(p,q)}_0}(\lambda^0_{x,y}).
\end{equation}
To see this, one can begin by checking they have the same associated
variety: the $U(p,{\mathbb C}) \times U(q,{\mathbb C})$ saturations of
${\mathfrak u}^{U(p,q)}_+ \cap \mathfrak{s}$ and ${\mathfrak
  u}^{U(p,q)}_0 \cap \mathfrak{s}$ coincide.  (The dense orbit of
$U(p,{\mathbb C}) \times U(q,{\mathbb C})$ is one of the two
possibilities with one Jordan block of size $3$ and the others of size
$1$.)  A little further checking shows that they also have the same
annihilator: for $\ell \leq (n-2)/2$, given the associated variety
calculation, there is a unique possibility for the annihilator; a
slightly more refined analysis handles larger $\ell$.  Given that
their annihilators and associated varieties are the same, the main
result of \cite{BVUpq} implies \eqref{eq:Aq-big-middle-coincide}.
Similarly, for the other edge of the inequalities in
\eqref{eq:middlex}, namely
\[
(x,y) = \left (-(n-1)/2, \ell+(n-1)/2 \right),
\]
we have
\begin{equation}\label{eq:Aq-small-middle-coincide}
A_{{\mathfrak q}^{U(p,q)}_0}(\lambda^0_{x,y}) \simeq A_{{\mathfrak
    q}^{U(p,q)}_-}(\lambda^-_{x,y})
\end{equation}
by a similar argument.
\end{subequations}

\section{Quaternionic hyperboloids}
\label{sec:Sppq}
\setcounter{equation}{0}

\begin{subequations}\label{se:Sppq}
In this section we use the ideas from Section \ref{sec:invt} to investigate the
discrete series of the non-symmetric spherical spaces
\begin{equation}\begin{aligned}
  H_{4p,4q} &= \{v\in {\mathbb H}^{p,q} \mid \langle
    v,v\rangle_{p,q} = 1\}\\ &= [Sp(p,q)\times Sp(1)]/[Sp(p-1,q)\times
      Sp(1)_\Delta].
\end{aligned}\end{equation}
Here $\langle,\rangle_{p,q}$ is the standard Hermitian form of
signature $(p,q)$ on ${\mathbb H}^{p+q}$. We are using the action of
a real form of the enlarged group from \eqref{eq:Spbig}, namely
\begin{equation}
  Sp(p,q) \times Sp(1) = Sp(p,q)_{\text{linear}} \times
  Sp(1)_{\text{scalar}};
\end{equation}

The inclusion of the last
side of the equality (for $H_{4p,4q}$) in the middle is just given by
the action of this enlarged quaternionic unitary group on the basis
vector $e_1$; surjectivity is Witt's theorem for quaternionic
Hermitian forms. To avoid talking about degenerate cases, we will
assume
\begin{equation}
  p, q \ge 2.
  \end{equation}
Just as in Section \ref{sec:Upq}, we will simplify many formulas by
writing
\begin{equation}\label{eq:Spnpq}
  n = p+q.
\end{equation}

The homogeneous space
$Sp(n)/Sp(n-1)$ has another noncompact real form $[Sp(2n,{\mathbb
  R})\times Sp(2,{\mathbb R})]/[Sp(2(n-1),{\mathbb R})\times
  Sp(2,{\mathbb R})_\Delta]$, arising from an inclusion
\begin{equation}
  Sp(2n,{\mathbb R})\times Sp(2,{\mathbb R})\ \hookrightarrow O(2n,2n).
\end{equation}
This real form certainly has discrete series: we expect that the
discrete summands of the restriction of $\pi_\ell^{O(2n,2n)}$ are
indexed by discrete series representations of $Sp(2,{\mathbb R})$,
just as we find below (for $Sp(p,q)$) that they are indexed by
irreducible representations of the compact group $Sp(1)$. But we have
not carried out this analysis.

Our goal is to restrict the discrete series representations
$\pi^{O(4p,4q)}_\ell$ of \eqref{eq:Opqdisc} to $Sp(p,q)$, and so to
understand some representations in the discrete series of $(Sp(p,q)\times
Sp(1))/(Sp(p-1,q)\times Sp(1))$.
\end{subequations} 

\begin{subequations}\label{se:OSpbranch}
We have calculated in Theorem \ref{thm:OSpcptbranch} how the $O(4p)$
and $O(4q)$ representations appearing in \eqref{eq:Obranch} restrict
to $Sp$. The result is
\begin{small}\begin{equation}\label{e:OSpBigBranch}
    \pi_\ell^{O(4p,4q)}|_{[Sp(p)\times Sp(1)]\times [Sp(q)\times Sp(1)]} =
\sum_{\substack {m=0\\[.1ex] 0 \le k \le m/2}}^\infty \
\sum_{\substack{0\le e \le d\\[.1ex] 0\le e'\le d'\\[.2ex] d+e =
    m+\ell+4q\\[.1ex] d'+e' = m-2k}}
\pi^{Sp(p)\times Sp(1)}_{d,e}\otimes \pi^{Sp(q)\times Sp(1)}_{d',e'}.
\end{equation}\end{small}
The group to which we are restricting here is actually a little larger
than the maximal compact subgroup of $Sp(p,q)\times Sp(1)$, which is
\begin{equation}
  Sp(p) \times Sp(q) \times Sp(1)_\Delta;
\end{equation}
the subscript $\Delta$ indicates that this $Sp(1)$ factor
(corresponding to scalar multiplication on ${\mathbb H}^{p,q})$) is
diagonal in the $Sp(1)\times Sp(1)$ of \eqref{e:OSpBigBranch}
(corresponding to separate scalar multiplications on ${\mathbb H}^p$
and ${\mathbb H}^q$).
The branching $(G\times G)|_{G_\Delta}$ is tensor product
decomposition, which is very simple for $Sp(1)$. We find
\begin{small}\begin{equation}\label{e:OSpcptBranch}\begin{aligned}
    &\pi_\ell^{O(4p,4q)}|_{[Sp(p)\times Sp(q)\times Sp(1)]} =\\
&\sum_{\substack {m=0\\[.2ex] 0 \le k \le m/2}}^\infty \
\sum_{\substack{0\le e \le d\\[.1ex] 0\le e'\le d'\\[.2ex] d+e =
    m+\ell+4q\\[.1ex] d'+e' = m-2k}}
\sum_{j=0}^{\min(d-e,d'-e')}
\pi^{Sp(p)}_{d,e}\otimes \pi^{Sp(q)}_{d',e'}\otimes \pi^{Sp(1)}_{d+d'-e-e'-2j}.
\end{aligned}\end{equation}\end{small}

It will be useful to rewrite this formula. The indices $m$ and $k$
serve only to bound some of the other indices, so we can eliminate
them by rewriting the bounds. We find
\begin{small}\begin{equation}\label{e:OSpcptBranch2}\begin{aligned}
    &\pi_\ell^{O(4p,4q)}|_{[Sp(p)\times Sp(q)\times Sp(1)]} =\\[.5ex]
&\sum_{\substack{0\le e \le d\quad 0\le e'\le d'\\[.4ex] d'+e' \le d+e-\ell-4q
          \\[.4ex] d'+e' \equiv d+e - \ell \pmod{2}}}
     \sum_{\substack{|(d-e)-(d'-e')|\le f \\[.2ex] \quad \le
         (d-e)+(d'-e')\\[.4ex] \quad f\equiv
         (d-e)+(d'-e')\pmod{2}}}
\pi^{Sp(p)}_{d,e}\otimes \pi^{Sp(q)}_{d',e'}\otimes \pi^{Sp(1)}_f.
\end{aligned}\end{equation}\end{small}

For each of these representations of $K$, define integers $x$ and $y$
by solving the equations
\begin{equation}
  x+y = \ell, \qquad x-y = f.
\end{equation}
The congruence condition on $f$ guarantees that $x$ and $y$ are
indeed integers. Conversely, given any integers $x$ and $y$ satisfying
\begin{equation}
  x+y=\ell, \qquad x\ge y
\end{equation}
we can define
\begin{equation}
\begin{split}
  \pi^{Sp(p,q)\times Sp(1)}_{x,y} = \text{subrepresentation of
    $\pi^{O(4p,4q)}_\ell|_{Sp(p,q)\times Sp(1)}$}\\
    \; \text{where $Sp(1)$ acts
    with infl. char. $x-y+1$.}
\end{split}
\end{equation}
Equivalently, we are asking that $Sp(1)$ act by a multiple of
$\pi^{Sp(1)}_{x-y}$.
\end{subequations} 

This calculation, together with Corollary \ref{cor:Hdiff}, proves most of
\begin{proposition}\label{prop:OSpbranch} Suppose $p$ and $q$ are
  nonnegative integers, each at least two; and suppose $\ell > -2n
  +1$. Then the  restriction of the discrete series representation
  $\pi^{O(4p,4q)}_\ell$ to $Sp(p,q)\times Sp(1)$ is the direct sum of
  the one-parameter family of representations
  $$\pi^{Sp(p,q)\times Sp(1)}_{x,y},\quad 
  x\ge y \in {\mathbb Z}, \quad x+y =
  \ell.$$
  The infinitesimal character of $\pi^{Sp(p,q)\times Sp(1)}_{x,y}$ corresponds to
  the weight
  $$(x+n, y+n-1,n-2,\ldots,1)(x-y+1).$$
  Restriction to the maximal compact subgroup is
 \begin{small}\begin{equation*}\begin{aligned}
    & \pi^{Sp(p,q)\times Sp(1)}_{x,y}|_{Sp(p)\times Sp(q) \times Sp(1)}  =\\[.5ex]
&\sum_{\substack{0\le e \le d\quad 0\le e'\le d'\\[.4ex] d'+e' \le d+e-(x+y)-4q
          \\[.4ex] d'+e' \equiv d+e - (x+y) \pmod{2}}}
     \sum_{\substack{|(d-e)-(d'-e')|\le x-y \\[.2ex] x-y \le
         (d-e)+(d'-e')\\[.4ex] \quad
         (d-e)+(d'-e') \equiv x-y \pmod{2}}}
\pi^{Sp(p)}_{d,e}\otimes \pi^{Sp(q)}_{d',e'}\otimes \pi^{Sp(1)}_{x-y}.
\end{aligned}\end{equation*}\end{small}

 Each of the representations $\pi^{Sp(p,q)}_{x,y}$ is irreducible.
\end{proposition}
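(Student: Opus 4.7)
The plan is to read the parametrization and $K$-type formula directly off \eqref{e:OSpcptBranch2}, invoke Corollary \ref{cor:Hdiff} for the infinitesimal character, and adapt the argument used for $U(p,q)$ around \eqref{e:Aq-smallx} to obtain irreducibility.

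First I would observe that $Sp(1)_{\text{scalar}}$ commutes with $Sp(p,q)$ inside $Sp(p,q)\times Sp(1)\subset O(4p,4q)$, so every $Sp(1)_{\text{scalar}}$-isotypic component of $\pi^{O(4p,4q)}_\ell$ is a subrepresentation for $Sp(p,q)\times Sp(1)$. In \eqref{e:OSpcptBranch2} the $Sp(1)$-types are indexed by $f\ge 0$, and combining the two parity conditions there (together with $d-e\equiv d+e\pmod 2$ and similarly for the primed variables) yields $f\equiv\ell\pmod{2}$. Setting $x=(\ell+f)/2$ and $y=(\ell-f)/2$ therefore produces integers with $x+y=\ell$ and $x\ge y$, and every such pair arises exactly once. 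By definition $\pi^{Sp(p,q)\times Sp(1)}_{x,y}$ extracts the summands of \eqref{e:OSpcptBranch2} with $f=x-y$; rewriting the formula in these variables yields the stated $K$-type formula.

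For the infinitesimal character I would apply Corollary \ref{cor:Hdiff}, granted irreducibility (and hence the existence of an infinitesimal character). Since $\pi^{Sp(p,q)\times Sp(1)}_{x,y}$ is a discrete summand of $L^2(H_{4p,4q})$, Frobenius reciprocity supplies a nonzero $Sp(p-1,q)\times Sp(1)_\Delta$-fixed vector in its dual, so the hypothesis of Corollary \ref{cor:Hdiff} is met. The $\mathfrak{sp}(1)$-factor of the infinitesimal character is, by construction, the Harish-Chandra parameter $x-y+1$ of $\pi^{Sp(1)}_{x-y}$, and the overarching $O(4n)$-parameter $\alpha$ from Corollary \ref{cor:Rdiff} for the ambient $\pi^{O(4p,4q)}_\ell$ is $\ell+2n-1$. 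Solving $\xi+\tau=\ell+2n-1$ and $\xi-\tau=x-y+1$ together with $x+y=\ell$ yields $\xi=x+n$ and $\tau=y+n-1$, reproducing the stated weight $(x+n,y+n-1,n-2,\ldots,1)(x-y+1)$.

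The main obstacle is irreducibility. Following the template used for $U(p,q)$ near \eqref{e:Aq-smallx}, I would identify each $\pi^{Sp(p,q)\times Sp(1)}_{x,y}$ as a cohomologically induced module $A_{\mathfrak q}(\lambda)$ for a $\theta$-stable parabolic $\mathfrak q$ of $\mathfrak{sp}(p,q)\times\mathfrak{sp}(1)$, with $\mathfrak q$ chosen according to the signs of the orbit parameters $(x_{\text{orbit}},y_{\text{orbit}})$ of \eqref{se:Horbit}. The constraint $x\ge y$ together with $\ell>-2n+1$ forces $x_{\text{orbit}}>0$, leaving two (or three, counting the partially nilpotent boundary $y_{\text{orbit}}=0$) families of Levis built from $Sp$-factors, in analogy with $L^{U(p,q)}_+$ and $L^{U(p,q)}_0$. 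Matching $K$-types via Blattner's formula identifies the candidate, after which one checks that $\lambda$ lies in the weakly fair range and invokes the irreducibility-or-vanishing theorem of \cite{Vunit}; nonvanishing is visible from the $K$-type formula established above. The hardest subpoint, which prevents a verbatim import of the $U(p,q)$ argument, is that outside type $A$ the birationality of the moment map from the cotangent bundle of the relevant partial flag variety is not automatic; so this birationality must be verified case-by-case for the families of parabolics, or else a different irreducibility criterion substituted.
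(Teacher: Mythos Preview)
Your proposal matches the paper's argument: the $K$-type and parametrization statements are read off \eqref{e:OSpcptBranch2} exactly as you describe, the infinitesimal character is pinned down by Corollary \ref{cor:Hdiff}, and irreducibility is deferred to Kobayashi's identification of the $\pi^{Sp(p,q)\times Sp(1)}_{x,y}$ as weakly fair $A_{\mathfrak q}(\lambda)$ modules. On the birationality obstacle you correctly flag, the paper's resolution is worth knowing: for the family with $y_{\text{orbit}}\ge 0$ (parabolic ${\mathfrak q}_0$, Levi $U(1)_p\times U(1)_p\times Sp(p-2,q)\times U(1)$) one passes via induction by stages to a larger parabolic with Levi $U(2)_p\times Sp(p-2,q)\times U(1)$, where the moment map \emph{is} birational onto a normal image and the $U(p,q)$-style argument succeeds; but for the family with $y_{\text{orbit}}<0$ (parabolic ${\mathfrak q}_+$) the analogous enlargement would produce a noncompact $U(1,1)$ factor carrying a discrete series rather than a character, the trick fails, and the paper falls back on an omitted computational argument.
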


We will prove the irreducibility assertions (using \cite{Kob}) after
\eqref{eq:Aq-Spsmallx} below.

\begin{subequations}\label{se:Sppqrep}
Having identified the restriction to $Sp(p)\times Sp(q)\times Sp(1)$,
we want to record Kobayashi's identification of the actual
representations of $Sp(p,q)\times Sp(1)$. These come in two
families, according to
the values of the integers $x$ and $y$. The families are
cohomologically induced from two $\theta$-stable parabolic
subalgebras. The first is
\begin{equation}
  {\mathfrak q}_+^{Sp(p,q)\times Sp(1)} = {\mathfrak l}_+^{Sp(p,q)\times
    Sp(1)} + {\mathfrak u}_+^{Sp(p,q)\times Sp(1)} \subset
  {\mathfrak s}{\mathfrak p}(n,{\mathbb C}) \times  {\mathfrak
    s}{\mathfrak p}(1,{\mathbb C}),
\end{equation}
with Levi subgroup
\begin{equation}
  L_+^{Sp(p,q)\times Sp(1)} = [U(1)_p\times U(1)_q\times Sp(p-1,q-1)] \times U(1).
\end{equation}
(The first three factors are in $Sp(p,q)$. We write $U(1)_p$ for a
coordinate $U(1) \subset U(p)$, and $U(1)_q \subset U(q)$ similarly.)
The second parabolic is
\begin{equation}
  {\mathfrak q}_0^{Sp(p,q)\times Sp(1)} = {\mathfrak l}_0^{Sp(p,q)\times
    Sp(1)} + {\mathfrak u}_0^{Sp(p,q)\times Sp(1)} \subset
  {\mathfrak s}{\mathfrak p}(n,{\mathbb C}) \times  {\mathfrak
    s}{\mathfrak p}(1,{\mathbb C}),
\end{equation}
with Levi subgroup
\begin{equation}
  L_0^{Sp(p,q)\times Sp(1)} = [U(1)_p\times U(1)_p\times Sp(p-2,q)] \times U(1).
\end{equation}
More complete descriptions of these parabolics are in \cite{Kob}.)
Suppose first that
\begin{equation}\label{eq:Spbigx}
  x> \ell + (n-1), \qquad y < -(n-1).
\end{equation}
(Since $x+y=\ell$, these two inequalities are equivalent.) Write
$\xi_x$ for the character of $U(1)$ corresponding to $x\in {\mathbb Z}$.
Consider the one-dimensional character
\begin{equation}\label{eq:lam-Spbigx}
  \lambda^+_{x,y} = \left[\xi_x\otimes \xi_{-(y+2n-2)} \otimes
    1\right]\otimes \xi_{x-y}
\end{equation}
of $L^{Sp(p,q)\times Sp(1)}_+$. What Kobayashi proves in \cite{Kob}*{Theorem 6.1}
is
\begin{equation}\label{eq:Aq-Spbigx}
  \pi^{Sp(p,q)\times Sp(1)}_{x,y} = A_{{\mathfrak q}^{Sp(p,q)\times
      Sp(1)}_+}(\lambda^+_{x,y}) \qquad x > \ell + (n-1).
\end{equation}
Suppose next that
\begin{equation}\label{eq:Spsmallx}
 \ell + (n-1) \ge x > \ell/2, \qquad  -(n-1)\le y < \ell/2.
\end{equation}
(Since $x+y=\ell$, these two pairs of inequalities are equivalent.)
Consider the one-dimensional character
\begin{equation}\label{eq:lam-Spsmallx}
  \lambda^0_{x,y} = \left[\xi_x\otimes \xi_y\otimes 1\right]\otimes \xi_{x-y}
\end{equation}
of $L^{Sp(p,q)\times Sp(1)}_0$. Kobayashi's result in \cite{Kob}*{Theorem 6.1}
is now
\begin{equation}\label{eq:Aq-Spsmallx}
  \pi^{Sp(p,q)\times Sp(1)}_{x,y} = A_{{\mathfrak q}^{Sp(p,q)\times
      Sp(1)}_0}(\lambda^0_{x,y})
  \qquad \ell/2 < x \le \ell + (n-1)).
\end{equation}
\end{subequations}

\begin{subequations}\label{se:Sppqorbit}
Here is the orbit method perspective. Use a
trace form to identify ${\mathfrak g}_0^*$ with ${\mathfrak
  g}_0$. Linear functionals vanishing on ${\mathfrak h}_0^*$ are
quaternionic matrices
\begin{equation}
  \lambda(z,u,v) = \left[\begin{pmatrix}z & \hskip -2ex u& \hskip -2ex
    v\\[.5ex] -\overline u &
    \\ \overline v & \text{\Large \hskip 3ex $0_{(n-1)\times (n-1)}$ \hskip
      -2ex}\\ \end{pmatrix},-z\right] \simeq
         {\mathfrak s}{\mathfrak p}(1) + {\mathbb H}^{p-1,q}
\end{equation}
with $z\in {\mathfrak s}{\mathfrak p}(1)$ (the purely imaginary
quaternions), $u \in {\mathbb H}^{p-1}$, $v\in {\mathbb H}^q$.

The orbits of $H=Sp(p-1,q)\times Sp(1)_\Delta$ of largest dimension are given
by $|z|$, and the value of the Hermitian form on the vector $(u,v)$:
positive for the orbits represented by nonzero elements
$r(e_{12}-e_{21})$ (nonzero eigenvalues $i(|z|\pm a)/2$, with $a=(|z|^2 +
4r^2)^{1/2}$); negative for nonzero elements $s(e_{1,p+1}+ e_{p+1,1})$
(nonzero eigenvalues $i(|z|\pm a)/2$, with $a=(|z|^2 - 4s^2)^{1/2}$); and
zero for the nilpotent element $(e_{12}-e_{21}+e_{1,p+1}+e_{p+1,1})$
(nonzero eigenvalues $i|z|/2$).

Define
\begin{equation}
  \ell_{\text{orbit}} = \ell + (2n-1), \quad x_{\text{orbit}} = x +
  n, \quad y_{\text{orbit}} = y + n-1.
  \end{equation}
The coadjoint orbits for discrete series have representatives
\begin{equation}
    \lambda(x_{\text{orbit}},y_{\text{orbit}}) = \begin{cases} [ix_{\text{orbit}}e_1
    -iy_{\text{orbit}}e_{p+1},i(x_{\text{orbit}} - y_{\text{orbit}})]&\\
      \qquad x_{\text{orbit}} > 0 > y_{\text{orbit}}&\\[.5ex]
 [ix_{\text{orbit}}e_1 +
   (e_{2,p}-e_{p,2}+e_{2,p+1}+e_{p+1,2}),ix_{\text{orbit}}] &\\
 \qquad x_{\text{orbit}} > 0 =  y_{\text{orbit}}&\\[.5ex]
    [ix_{\text{orbit}}e_1 + iy_{\text{orbit}}e_2,i(x_{\text{orbit}} -
      y_{\text{orbit}})] & \\
    \qquad x_{\text{orbit}} > y_{\text{orbit}} > 0 &
    \end{cases}
\end{equation}
Then
\begin{equation}
  \pi_{x,y}^{Sp(p,q)} = \pi(\text{orbit}\ \lambda(x_{\text{orbit}},y_{\text{orbit}})).
\end{equation}
(The partly nilpotent
coadjoint orbits $\lambda(x_{\text{orbit}},0)$ (with
$x_{\text{orbit}} >0$) can be treated as for $U(p,q)$.)

In the orbit method picture the condition \eqref{eq:Spbigx} simplifies to
\begin{equation}\label{eq:Spbigxorbit}
  x_{\text{orbit}} >  0 > y_{\text{orbit}}.
\end{equation}
Similarly, \eqref{eq:Spsmallx} becomes
\begin{equation}\label{eq:Spsmallxorbit}
  x_{\text{orbit}} > y_{\text{orbit}} \ge 0;
\end{equation}
equality in the inequality is the case of partially nilpotent coadjoint orbits.
In all cases we need also the genericity condition
\begin{equation}
  \ell_{\text{orbit}} >0 \iff \ell > -(2n-1), \qquad
  x_{\text{orbit}}-y_{\text{orbit}}>0 \iff x-y+1 > 0
\end{equation}
and the integrality conditions
\begin{equation}
  x_{\text{orbit}} \equiv n \pmod{\mathbb Z},\qquad  y_{\text{orbit}}
  \equiv n-1 \pmod{\mathbb Z}.
\end{equation}
\end{subequations} 

\begin{subequations}\label{se:Sppqirr}
Here is a sketch of proof of the irreducibility assertion from
Proposition \ref{prop:OSpbranch}. Each of the cohomologically induced
representations above is in the weakly fair range, so the general
theory of \cite{Vunit} applies. One conclusion of this theory is that
the cohomologically induced representations are irreducible modules
for a certain twisted differential operator algebra ${\mathcal
  D}_{x,y}$; but in contrast to the $U(p,q)$ case, the natural map
$$U({\mathfrak s}{\mathfrak p}(p+q,{\mathbb C}) \times {\mathfrak
  s}{\mathfrak p}(1,{\mathbb C})) \rightarrow {\mathcal D}_{x,y}$$
{\em need not} be surjective: 
some of the cohomologically induced modules corresponding to discrete
series for $[Sp(2n,R)/Sp(2n-4,R)\times Sp(2,R)_\Delta$ are
  {\em reducible}.

Here is an irreducibility proof for the case
\eqref{eq:Aq-Spsmallx}. We begin by defining
\begin{equation}
  {\mathfrak q}_{0,\text{big}}^{Sp(p,q)\times Sp(1)} = {\mathfrak
    l}_{0,\text{big}}^{Sp(p,q)\times Sp(1)} + {\mathfrak
    u}_{0,\text{big}}^{Sp(p,q)\times Sp(1)} \supset
{\mathfrak q}_0^{Sp(p,q)\times Sp(1)}
\end{equation}
with Levi subgroup
\begin{equation}
  L_{0,big}^{Sp(p,q)\times Sp(1)} = [U(2)_p\times Sp(p-2,q)] \times U(1).
\end{equation}
Define
\begin{equation}
  \lambda^{0,\text{big}}_{x,y} = \left[\pi^{U(2)}_{x,y}\otimes
    1\right]\otimes \xi_{x-y}.
\end{equation}
Induction by stages proves that
\begin{equation}
  \pi^{Sp(p,q)\times Sp(1)}_{x,y} = A_{{\mathfrak q}^{Sp(p,q)\times
      Sp(1)}_{0,\text{big}}}(\lambda^{0,\text{big}}_{x,y})
  \qquad \ell/2 < x \le \ell + (n-1)).
\end{equation}
In this realization, the irreducibility argument from the $U(p,q)$
case goes through. The moment map from the cotangent bundle of the
(smaller) partial flag variety {\em is} birational onto its (normal)
image; so the map
\[U({\mathfrak s}{\mathfrak p}(p+q,{\mathbb C}) \times {\mathfrak
  s}{\mathfrak p}(1,{\mathbb C})) \rightarrow {\mathcal
  D}^{\text{small}}_{x,y}\]
is surjective, proving irreducibility. (The big parabolic subalgebra
defines a small partial flag variety, which is why we label the
twisted differential operator algebra ``small.'')

This argument does not apply to the case \eqref{eq:Aq-Spbigx},
since the corresponding larger Levi subgroup has a factor $U(1,1)$,
and the corresponding representation there is a discrete series. In
that case we have found only an unenlightening computational argument
for the irreducibility, which we omit.

Finally, the two series of derived functor modules fit together as
follows.  If we consider the edge of the inequalities in
\eqref{eq:Spbigx} and \eqref{eq:Spsmallx}, namely
\[
(x,y) = \left ( \ell + (n-1), -(n-1) \right ),
\]
then we have
\begin{equation}\label{eq:Aq-Sp-coincide}
 A_{{\mathfrak q}^{Sp(p,q)\times Sp(1)}_+}(\lambda^+_{x,y}) =
  A_{{\mathfrak q}^{Sp(p,q)\times Sp(1)}_0}(\lambda^0_{x,y}).
\end{equation}
For this equality, as for the irreducibility of $A_{{\mathfrak
    q}^{Sp(p,q)\times Sp(1)}_+}(\lambda^+_{x,y})$, we have found only
an unenlightening computational argument, which we omit.

\end{subequations}

\section{Octonionic hyperboloids}
\label{sec:spinp9-p}
\setcounter{equation}{0}

\begin{subequations}\label{se:spinp9-p}
We look for noncompact forms of the non-symmetric spherical space
$$S^{15} = \Spin(9)/\Spin(7)'$$
studied in Section \ref{sec:O}. The map from $\Spin(p,q)$ (with
$p+q=9$) to a form of
$O(16)$ will be given by the spin representation, which is therefore
required to be real. The spin representation is real if and only if
$p+q$ and $p-q$ are each congruent to $0$, $1$, or $7$ modulo $8$. The
candidates are
\begin{equation}
  G=\Spin(5,4) \quad\text{or}\quad G=\Spin(8,1),
\end{equation}
with maximal compact subgroups
\begin{equation}
  K = \Spin(5)\times_{\{\pm 1\}} \Spin(4) \quad\text{or}\quad \Spin(8);
\end{equation}
in the first case this means that the natural central subgroups $\{\pm 1\}$ in
$\Spin(5)$ and $\Spin(4)$ are identified with each other (and with the
natural central $\{\pm 1\}$ in $\Spin(5,4)$).  In each case the
sixteen-dimensional spin representation of $G$ is real and preserves a
quadratic form of signature $(8,8)$. One way to see this is to notice
that the restriction of the spin representation to $K$ is a sum
of two irreducible representations
  \begin{equation}
    \spin(5) \otimes \spin(4)_{\pm} \quad \text{or} \quad \spin(8)_\pm
  \end{equation}
Here $\spin(2m)_\pm$ denotes
the two half-spin representations, each of dimension $2^{m-1}$, of
$\Spin(2m)$. We are therefore looking at the hyperboloid
\begin{equation}\begin{aligned}
    H_{8,8} &= \{v\in {\mathbb R}^{8,8} \mid \langle v,v\rangle_{8,8} = 1\}\\
    &= \Spin(5,4)/\Spin(3,4)'\quad \text{or}\\
    &=\Spin(8,1)/\Spin(7)'.
\end{aligned}\end{equation}
\end{subequations}

\begin{subequations}\label{se:spin81}
The discrete series for the second case was described
by Kobayashi in connection with  branching from $SO(8,8)$ to $\Spin(8,1)$ in
\cite[Section 5.2]{toshi:howe}.  Here we carry out an 
approach
using the development above.
The harmonic analysis problem is
\begin{equation}
  L^2(H_{8,8}) \simeq L^2(\Spin(8,1))^{\Spin(7)'};
\end{equation}
the $\Spin(7)'$ action is on the right. This problem is resolved by
Harish-Chandra's Plancherel formula for $\Spin(8,1)$: the discrete
series are exactly those of Harish-Chandra's discrete series that
contain a $\Spin(7)'$-fixed vector, and the multiplicity is the
dimension of that fixed space. Because of Helgason's branching law
from $\Spin(7)'$ to $\Spin(8)$ \eqref{eq:87'}, the number in question is
the sum of the multiplicities of the $\Spin(8)$ representations of
highest weights
\begin{equation}
  \mu_y = (y/2,y/2,y/2,y/2) \qquad (y \in {\mathbb N}).
\end{equation}

Corollary \ref{cor:octdiff} constrains the possible infinitesimal
characters, and therefore the Harish-Chandra parameters, of
representations appearing on this hyperboloid. Here are the discrete
series having these infinitesimal characters. Suppose $x$ is an integer
satisfying $2x+y+7 >0$. Define
\begin{equation}
  \pi^{\Spin(8,1)}_{x,y,\pm} = \begin{cases} \substack{\text{discrete
        series with
        parameter}\\((2x+y+7)/2,(y+5)/2,(y+3)/2,\pm(y+1)/2)}& x \ge  0\\[.5ex]
    \qquad\qquad 0 & 0> x > -4\\[.5ex]
 \substack{\text{discrete series with
     parameter}\\ ((y+5)/2,(y+3)/2,(y+1)/2,\pm(2x+y+7)/2)} &
 -4 \ge x > -(y+7)/2.
\end{cases} \end{equation}
We can now use Blattner's formula to determine which of these discrete
series contain $\Spin(8)$ representations of highest weight
$\mu_y$. The representations with a subscript $-$ are immediately
ruled out (since the last coordinate of the highest weight of any
$K$-type of such a discrete series must be negative). Similarly, in
the first case with $+$ the lowest $K$-type has highest weight
$(2x+1,1,1,1)+\mu_y$, and all other highest weights of $K$-types arise
by adding positive integers to these coordinates; so $\mu_y$ cannot
arise.

In the third case with $+$ the lowest $K$-type has highest weight
$(0,0,0,x+4)+\mu_y$; we get to $\mu_y$ by adding the nonnegative
multiple $-x-4$ of the noncompact positive root $e_4$. A more careful
examination of Blattner's formula shows that in fact $\mu_y$ has
multiplicity one. This proves

\begin{equation}\label{eq:spin81ds}
  L^2(H_{8,8})_{\text{disc}} = \sum_{ 
  y\ge 1,\  -4 \ge x > -(y+7)/2} 
  \pi^{\Spin(8,1)}_{x,y,+}.
\end{equation}
Furthermore (by Corollary \ref{cor:octdiff})
\begin{equation}
  \pi^{O(8,8)}_\ell|_{\Spin(9,1)} = \sum_{\substack{ 
 y\ge 1,\  -4 \ge x \ge -(y+7)/2\\[.4ex] 2x+y=\ell}}
  \pi^{\Spin(8,1)}_{x,y,+}.
\end{equation}
These discrete series are cohomologically induced from
one-dimensional characters of the spin double cover of the compact
Levi subgroup
\begin{equation}
  SO(2)\times U(3) \subset SO(2) \times SO(6) \subset SO(8) \subset
  SO(8,1).
\end{equation}

\end{subequations} 

\begin{subequations}\label{se:spin81orbit}
Here is the orbit method perspective. We have
\[\label{eq:spin81orbits}
  ({\mathfrak g}_0/{\mathfrak h}_0)^* \simeq \Spin^8 + {\mathbb R}^7
\]
as a representation of $H=\Spin(7)'$; the first summand is the
$8$-dimensional spin representation. What distinguishes this from the
compact case analyzed in \eqref{se:Oorbit} is that the restriction of
the  natural $G$-invariant form has opposite signs on the two
summands; we take it to be negative on the first and positive on the
second. Because of \eqref{eq:GmodH},
the orbits we want are represented by $H$ orbits of maximal dimension
on this space. A generic orbit on ${\mathbb R}^7$ is given by the
value of the quadratic form
length $a_7 > 0$, and the corresponding isotropy group is
$\Spin(6)'\simeq SU(4)$. As a representation of $SU(4)$,
\[
\Spin^8 \simeq {\mathbb C}^4
\]
regarded as a real vector space. Here again the nonzero orbits are
indexed by the value of the Hermitian form $b_{\text{spin}} < 0$. The
conclusion is 
that the regular $H$ orbits on $({\mathfrak g}_0/{\mathfrak h}_0)^*$
are
\[
\lambda(a_7,b_{\text{spin}}) \qquad (a_7> 0, \quad b_{\text{spin}} < 0).
\]
It turns out that the eigenvalues of such a matrix are $\pm i(a_7/4)^{1/2}$
(repeated three times), $\pm i(a_7/4 +b_{\text{spin}})^{1/2}$, and
one more eigenvalue zero. Accordingly the element is elliptic if and
only if $a_7/4 + b_{\text{spin}} \ge 0$. In this case we write
\[
x_{\text{orbit}} = (a_7/4 + b_{\text{spin}})^{1/2} -a_7^{1/2}/2, \quad
y_{\text{orbit}} = a^{1/2}_7 \qquad (a_7/4 + b_{\text{spin}} \ge 0)
\]
The elliptic elements we want are
\[
\lambda(x_{\text{orbit}},y_{\text{orbit}}) = (y_{\text{orbit}}/2,
y_{\text{orbit}}/2, y_{\text{orbit}}/2, y_{\text{orbit}}/2 +x), \qquad
(y_{\text{orbit}}/2 > -x_{\text{orbit}} > 0);
\]
we have represented the element (in fairly standard coordinates) by
something in the dual of a compact Cartan subalgebra $[{\mathfrak
  s}{\mathfrak o}(2)]^4$ to which it is conjugate.

If now we define
\[
y = y_{\text{orbit}} -3, \quad x = x_{\text{orbit}} -2,
\]
then 
\begin{equation}
  \pi_{x,y,+}^{\Spin(8,1)} =
  \pi(\text{orbit}\ \lambda(x_{\text{orbit}},y_{\text{orbit}})) \qquad
  (0 > x_{\text{orbit}} > -y_{\text{orbit}}).
\end{equation}
When $y_{\text{orbit}} = 1$ or $2$ or $3$, or $x_{\text{orbit}}=-1$, these
representations are zero; that is the source of the conditions
$$y_{\text{orbit}} \ge 4, \quad -2 \ge x_{\text{orbit}} -
y_{\text{orbit}}/2$$
in \eqref{eq:spin81ds}.
\end{subequations} 

\begin{subequations}\label{se:spin54}
  In the first case of \eqref{se:spinp9-p}, we are looking at
\begin{equation}
H_{8,8} \simeq \Spin(5,4))/{\Spin(4,3)'};
\end{equation}
this is the ${\mathbb R}$-split version of Section \ref{sec:O}, and so
arises from
\begin{equation}
  \Spin(4,3)' \ {\buildrel{\text{spin}}\over
    \longrightarrow}\  \Spin(4,4) \subset \Spin(5,4).
\end{equation}
We have not determined the discrete series for this homogeneous space;
of course we expect two-parameter families of representations
cohomologically induced from one-dimensional characters of spin double
covers of real forms of $SO(2)\times U(3)$.

\end{subequations} 

\section{The split $G_2$ calculation}
\label{sec:G2s}
\setcounter{equation}{0}

\begin{subequations}\label{se:G2hyp}
Write $G_{2,s}$ for the $14$-dimensional split Lie group
of type $G_2$. There is a $7$-dimensional real representation
$(\tau_{{\mathbb R},s},W_{{\mathbb R},s})$ of
$G_{2,s}$, whose weights are zero and the six short
roots. This preserves an inner product of signature $(4,3)$, and so
defines an inclusion
\begin{equation}\label{eq:G2SO7nc}
  G_{2,s} \hookrightarrow SO(4,3).
\end{equation}
The corresponding actions of $G_{2,s}$ on the hyperboloids
\begin{equation}
  H_{4,3} = O(4,3)/O(3,3), \qquad H_{3,4} = O(3,4)/O(2,4)
\end{equation}
are transitive. The isotropy groups are real forms of $SU(3)$:
\begin{equation}
  H_{4,3} \simeq G_{2,s}/SL(3,{\mathbb R}), \qquad H_{3,4} \simeq
  G_{2,s}/SU(2,1).
\end{equation}
\end{subequations} 
The discrete series for these cases are given by Kobayashi (up to
two questions of reducibility) in
\cite[Thm 6.4]{Kob} ; see also \cite[Theorem 3.5]{toshi:zuckerman}.
We now give a self-contained treatment of the
classification, and resolve the reducibility.

\begin{subequations}\label{se:G2sreps}
The (real forms of) $O(7)$ representations appearing on these
hyperboloids are all related to the flag variety
\begin{equation}\begin{aligned}
    O(7,{\mathbb C})/P &= \text{isotropic lines in\ }{\mathbb C}^7, \\
     P = MN,\qquad M&=GL(1,{\mathbb C}) \times O(5,{\mathbb C}).
\end{aligned}\end{equation}
What makes everything simple is that $G_2({\mathbb C})$ is transitive
on this flag variety:
\begin{equation}\begin{aligned}
\text{isotropic lines in\ }{\mathbb C}^7 &= G_2({\mathbb C})/Q, \\
Q=LU, \qquad L &= GL(2,{\mathbb C}).
\end{aligned}\end{equation}
Precisely, the discrete series for $H_{4,3}$ are cohomologically
induced from the $\theta$-stable parabolic
\begin{equation}
  {\mathfrak p}_1 = {\mathfrak m}_1 + {\mathfrak n}_1, \qquad
  M_1=SO(2)\times O(2,3).
\end{equation}
The discrete series representations are
\begin{equation}
  \pi^{O(4,3)}_{1,\ell} = A_{{\mathfrak p}_1}(\lambda_1(\ell)), \qquad \ell+5/2> 0.
\end{equation}
(cf. \eqref{se:Upq}). The inducing representation is the $SO(2)$
character indexed by $\ell$, and trivial on $O(2,3)$. Similarly, the
discrete series for $H_{3,4}$ are cohomologically
induced from the $\theta$-stable parabolic
\begin{equation}
  {\mathfrak p}_2 = {\mathfrak m}_2 + {\mathfrak n}_2, \qquad
  M_2=SO(2)\times O(1,4).
\end{equation}
The discrete series are
\begin{equation}
 \pi^{O(3,4)}_{2,\ell} = A_{{\mathfrak p}_2}(\lambda_2(\ell)),\qquad \ell+5/2 > 0.
\end{equation}

The intersections of these parabolics with $G_2$ are
\begin{equation}
  {\mathfrak q}_1 = {\mathfrak l}_1 + {\mathfrak u}_1, \qquad
  L_1 = \text{long root $U(1,1)$}.
\end{equation}
and
\begin{equation}
  {\mathfrak q}_2 = {\mathfrak l}_2 + {\mathfrak u}_2, \qquad
  L_2= \text{long root $U(2)$}.
\end{equation}
(The Levi subgroups are just {\em locally} of this form.) 
Because the $G_2$ actions on the $O(4,3)$ partial flag varieties are
transitive, we get discrete series representations for $H_{4,3}$
\begin{equation}
  \pi^{G_{2,s}}_{1,\ell} = A_{{\mathfrak q}_1}(\lambda_1(\ell)),\qquad \ell+5/2 > 0.
\end{equation}
The character is $\ell$ times the action of $L_1$ on the highest short
root defining ${\mathfrak q}_1$.

Similarly, for the action on $H_{3,4}$
\begin{equation}
  \pi^{G_{2,s}}_{2,\ell} = A_{{\mathfrak q}_2}(\lambda_2(\ell)),\qquad \ell +5/2 > 0.
\end{equation}
The {\tt atlas} software \cite{atlas} tells us that all of these
discrete series representations of $G_2$ are irreducible, with the
single exception of $\pi^{G_{2,s}}_{1,-2} = A_{{\mathfrak
    q}_1}(\lambda_1(-2)).$ That representation is a sum of two
irreducible constituents. One constituent is the unique non-generic
limit of discrete series of infinitesimal character a short root. In
\cite{G2}*{Theorem 18.5}, (describing some of Arthur's unipotent
representations) this is the representation described in (b). The
other constituent is described in part (c) of that same theorem. The
irreducible representation $\pi^{G_{2,s}}_{2,-2} = A_{{\mathfrak
    q}_2}(\lambda_2(-2))$ appears in part (a) of the
theorem. All of these identifications (including the reducibility of
$\pi^{G_{2,s}}_{1,-2}$) follow from knowledge of the
$K$-types of these representations (given in \eqref{se:G2sK} below)
and the last assertion of \cite{G2}*{Theorem 18.5}.

Summarizing, in the notation of \cite{G2},
\begin{equation}\label{eq:G2unip}
  \pi^{G_{2,s}}_{1,-2} \simeq J_{-}(H_2; (2, 0)) \oplus J(H_2; (1,1)),
  \qquad   \pi^{G_{2,s}}_{2,-2} \simeq J(H_1; (1,1)).
  \end{equation}
That is, the first discrete series for these non-symmetric spherical
spaces include three of the five unipotent representations for the
split $G_2$ attached to the principal nilpotent in $SL(3) \subset
G_2$.
\end{subequations} 

\begin{subequations}\label{G2sorbit}
  Here is the orbit method perspective. For the case of $H_{4,3}$, the
  representation of $H=SL(3,{\mathbb R})$ on $[{\mathfrak
    g}_0/{\mathfrak h}_0]^*$ is ${\mathbb R}^3 + ({\mathbb
    R}^3)^*$. The generic orbits of $H$ are indexed by non-zero real
  numbers $A$, the value of a linear functional on a vector. We can
  arrange the normalizations so that the elliptic elements are exactly
  those with $A>0$; if we define
  \[ \ell_{\text{orbit}} = A^{1/2}, \qquad \ell = \ell_{\text{orbit}} - 5/2, \]
and write $\lambda_1(\ell_{\text{orbit}})$ for a representative of this
orbit, then
\[
    \pi_{1,\ell}^{G_{2,s}}  =
    \pi({\text{orbit}},\lambda_1(\ell_{\text{orbit}})) \qquad
      (\ell_{\text{orbit}} > 0).
\]

For the case of $H_{3,4}$, the representation of $H=SU(2,1)$ on $[{\mathfrak
    g}_0/{\mathfrak h}_0]^*$ is ${\mathbb C}^{2,1}$; generic orbits
are parametrized by the nonzero values $B$ of the Hermitian form of signature
$(2,1)$. The elliptic orbits are those with $B>0$; if we define
  \[ \ell_{\text{orbit}} = B^{1/2}, \qquad \ell = \ell_{\text{orbit}} - 5/2 \]
then
\[
    \pi_{2,\ell}^{G_{2,s}}  =
    \pi({\text{orbit}},\lambda_2(\ell_{\text{orbit}})) \qquad
      (\ell_{\text{orbit}} > 0).
\]
\end{subequations}

\begin{subequations} \label{se:G2sK}
We conclude this section by calculating the restrictions to
\begin{equation}
  K = SU(2)_{\text{long}} \times_{\{\pm 1\}} SU(2)_{\text{short}}
  \subset G_{2,s}.
\end{equation}
We define
\begin{equation}\begin{aligned}
  \gamma_d^{\text{long}} &= \text{$(d+1)$-diml irr of
    $SU(2)_{\text{long}}$}\\
    \gamma_d^{\text{short}} &= \text{$(d+1)$-diml irr of
      $SU(2)_{\text{short}}$}
\end{aligned}\end{equation}
The maximal compact of $O(4,3)$ is $O(4)\times O(3)$. The embedding of
$G_{2,s}$ sends $SU(2)_{\text{long}}$ to one of the factors in
$$O(4) \supset SO(4) \simeq SU(2)\times_{\{\pm 1\}} SU(2),$$
and sends $SU(2)_{\text{short}}$ diagonally into the product of the
other $SU(2)$ factor and $SO(3)\subset O(3)$ (by the two-fold cover
$SU(2) \rightarrow SO(3)$). According to \eqref{eq:Obranch},
\begin{equation}\begin{aligned}
\pi^{O(4,3)}_{1,\ell}|_{O(4)\times O(3)} = \sum_{\substack{d-\ell-3
    \ge e \ge 0 \\[.2ex] e\equiv d-\ell-3 \pmod{2}}} \pi^{O(4)}_d \otimes
\pi^{O(3)}_e\\
\pi^{O(3,4)}_{2,\ell}|_{O(3)\times O(4)} = \sum_{\substack{d' -\ell -4
    \ge e' \ge 0\\[.2ex] e'\equiv d'-\ell-4\pmod{2}}} \pi^{O(3)}_{d'} \otimes
\pi^{O(3)}_{e'}.
\end{aligned}\end{equation}
By an easy calculation, we deduce
\begin{equation}\begin{aligned}
\pi^{G_{2,s}}_{1,\ell}|_K = \sum_{\substack{d-\ell-3 \ge  e\ge 0\\[.2ex]
 e\equiv d-\ell-3 \pmod{2}}} \gamma_d^{\text{long}} \otimes
   \left[\gamma_d^{\text{short}} \otimes
     \gamma_{2e}^{\text{short}}\right].\\
\pi^{G_{2,s}}_{2,\ell}|_K = \sum_{\substack{d'-\ell-4 \ge e'\ge
    0\\[.2ex] e'\equiv d'-\ell-4\pmod{2}}}
\gamma^{\text{long}}_{e'} \otimes \left[ \gamma_{e'}^{\text{short}} \otimes
     \gamma_{2d'}^{\text{short}}\right]
\end{aligned}\end{equation}
The internal tensor products in the short $SU(2)$ factors are of
course easy to compute:
\begin{equation}
\pi^{G_{2,s}}_{1,\ell}|_K = \sum_{\substack{d-\ell-3 \ge e\ge
    0\\[.2ex] e\equiv d-\ell-3\pmod{2}}} \sum_{k=0}^{\min(d,2e)}
\gamma^{\text{long}}_{e'} \otimes \gamma_{d+2e-2k}^{\text{short}},
\end{equation}
\begin{equation}
\pi^{G_{2,s}}_{2,\ell}|_K = \sum_{\substack{d'-\ell-4 \ge e'\ge
    0\\[.2ex] e'\equiv d'-\ell-4\pmod{2}}} \sum_{k'=0}^{e'}
\gamma^{\text{long}}_{e'} \otimes \gamma_{2d'+e'-2k'}^{\text{short}}
\end{equation}
\end{subequations} 

\section{The noncompact big $G_2$ calculation}
\label{sec:bigncG2}
\setcounter{equation}{0}

\begin{subequations}\label{se:bigncG2hyp}
In this section we look at noncompact forms of $S^7 \simeq
\Spin(7)'/G_{2,c}$ from Section \ref{sec:bigG2}. The noncompact forms
of $\Spin(7)$ are $\Spin(p,q)$ with $p+q=7$, having maximal compact
subgroups $\Spin(p)\times_{\{\pm 1\}} \Spin(q)$. None of these compact
subgroups can contain $G_{2,c}$ (unless $pq=0$), so the isotropy subgroup we are
looking for is the split form $G_{2,s}$. The seven-dimensional
representation of $G_{2,s}$ is real, and its invariant bilinear form
is of signature $(3,4)$; so we are looking at
\begin{equation}
  G_{2,s} \hookrightarrow \Spin(3,4),
\end{equation}
the double cover of the inclusion \eqref{eq:G2SO7nc}. This homogeneous
space is discussed briefly in \cite{Kob}*{Corollary 5.6(e)}, which is proven
in part (ii) of the proof on page 197. We will argue along similar
lines, but get more complete conclusions (parallel to
Kobayashi's results described in Sections
\ref{sec:Upq}--\ref{sec:Sppq}).

The
eight-dimensional spin representation of $\Spin(3,4)$ is real and of
signature $(4,4)$, so we get
\begin{equation}
  \Spin(3,4)' \hookrightarrow \Spin(4,4), \qquad \Spin(3,4)'\cap
  \Spin(3,4) = G_{2,s}.
\end{equation}
The $\Spin(3,4)'$ action on
\begin{equation}
  H_{4,4} = \Spin(4,4)/\Spin(3,4)
\end{equation}
is transitive, so
\begin{equation}
  H_{4,4}\simeq \Spin(3,4)'/G_{2,s}.
\end{equation}
In a similar fashion, we find an identification of six-dimensional
complex manifolds
\begin{equation}\label{eq:qmatchbigG2}
  \Spin(4,4)/[\Spin(2)\times_{\{\pm 1\}}] \Spin(2,4)] \simeq
    \Spin(3,4)'/\widetilde{U(1,2)}.
\end{equation}
The manifold on the left corresponds to the $\theta$-stable parabolic
${\mathfrak q}^{O(4,4)}$ described in \eqref{eq:qOpq}; the discrete
series $\pi^{O(4,4)}_\ell$ for $H_{4,4}$ are obtained from it by
cohomological induction.

The manifold on the right corresponds to the $\theta$-stable parabolic

\begin{equation}\label{eq:qSpin34}
  {\mathfrak q}^{\Spin(3,4)'} = {\mathfrak l}^{\Spin(3,4)'} +
  {\mathfrak u}^{\Spin(3,4)'} \subset
  {\mathfrak o}(7,{\mathbb C});
\end{equation}
the corresponding Levi subgroup is
\begin{equation}
  L^{\Spin(3,4)'}  = \widetilde{U(1,2)}
\end{equation}
The covering here is the ``square root of determinant'' cover; the
one-dimensional characters are half integer powers of the
determinant. We are interested in
\begin{equation}\begin{aligned}
  \lambda_\ell &= {\det}^{\ell/2} \in [L^{\Spin(3,4)'}]\,\widehat{\ }
  \qquad (\ell +3 > 0).\\
  \pi^{\Spin(3,4)'}_\ell &= A_{{\mathfrak q}^{\Spin(3,4)}}(\lambda_\ell) \qquad
  (\ell > -3).
\end{aligned}\end{equation}
The infinitesimal character of this representation is
\begin{equation}
  \text{infl char}(\pi_\ell^{\Spin(4,3)'}) = ((\ell+5)/2,(\ell+3)/2,(\ell+1)/2).
\end{equation}
As a consequence of \eqref{eq:qmatchbigG2},
\begin{equation}
  \pi^{O(4,4)}_\ell|_{\Spin(3,4)'} \simeq \pi^{\Spin(3,4)'}_\ell.
\end{equation}

The discrete part of the Plancherel decomposition is therefore
\begin{equation}\label{eq:bigncG2disc}
  L^2(H_{4,4})_{\text{disc}} = \sum_{\ell > -3}
    \pi_\ell^{\Spin(3,4)'}.
\end{equation}
The ``weakly fair'' range for $\pi^{\Spin(3,4)'}_\ell$ is $\ell \ge
-3$, so all the representations $\pi^{\Spin(3,4)'}_\ell$ are
contained in the weakly fair range. In particular, \cite{Vunit}
establishes {\em a priori} the unitarity of what turn out to be the
discrete series representations.
But the results in \cite{Vunit} prove only
\begin{equation}\label{eq:bigG2unit}
  \text{$\pi^{\Spin(3,4)'}_\ell$ is irreducible for $\ell \ge 0$.}
\end{equation}
The {\tt atlas} software \cite{atlas} proves the irreducibility of the
first two discrete series (those not covered by \eqref{eq:bigG2unit}).

\end{subequations} 

\begin{subequations}\label{se:bigG2sorbit}
  Here is the orbit method perspective. The
  representation of $H=G_{2,s}$ on $[{\mathfrak
    g}_0/{\mathfrak h}_0]^*$ is ${\mathbb R}^{3,4}$, the real
  representation whose highest weight is a short 
  root. We have already said that this representation carries an
  invariant quadratic form of signature $(3,4)$. The generic orbits of
  $H$ are indexed by non-zero real
  numbers $A$, the values of the quadratic form. We can
  arrange the normalizations so that the elliptic elements are exactly
  those with $A>0$; if we define
  \[ \ell_{\text{orbit}} = A^{1/2}, \qquad \ell = \ell_{\text{orbit}} - 3, \]
and write $\lambda(\ell_{\text{orbit}})$ for a representative of this
orbit, then
\[
    \pi_\ell^{\Spin(3,4)}  =
    \pi({\text{orbit}},\lambda(\ell_{\text{orbit}})) \qquad
      (\ell_{\text{orbit}} > 0).
\]
\end{subequations} 
\begin{bibdiv}
\begin{biblist}[\normalsize]

\bib{atlas}{misc}{
title={Atlas of Lie Groups and Representations {\rm software package}},
year={2017},
note={URL: {\tt http://www.liegroups.org}},
}

\bib{BVUpq}{incollection}
{
  AUTHOR = {Barbasch, Dan},
  AUTHOR = {Vogan, David},
     TITLE = {Weyl group representations and nilpotent orbits},
 BOOKTITLE = {Representation theory of reductive groups ({P}ark {C}ity,
              {U}tah, 1982)},
    SERIES = {Progr. Math.},
    VOLUME = {40},
     PAGES = {21--33},
 PUBLISHER = {Birkh\"auser Boston, Boston, MA},
      YEAR = {1983},
}

\bib{BOct}{article}{
   author={Borel, Armand},
   title={Le plan projectif des octaves et les sph\`eres comme espaces
   homog\`enes},
   language={French},
   journal={C. R. Acad. Sci. Paris},
   volume={230},
   date={1950},
   pages={1378--1380},
}

\bib{BHomog}{article}{
   author={Borel, Armand},
   title={Sur la cohomologie des espaces fibr\'es principaux et des espaces
   homog\`enes de groupes de Lie compacts},
   language={French},
   journal={Ann. of Math. (2)},
   volume={57},
   date={1953},
   pages={115--207},
}

\bib{Hinvt}{article}{
   author={Helgason, Sigurdur},
   title={Differential operators on homogeneous spaces},
   journal={Acta Math.},
   volume={102},
   date={1959},
   pages={239--299},
}

\bib{HinvtBull}{article}{
   author={Helgason, Sigurdur},
   title={Invariant differential equations on homogeneous manifolds},
   journal={Bull. Amer. Math. Soc.},
   volume={83},
   date={1977},
   number={5},
   pages={751--774},
}

  \bib{GGA}{book}{
   author={Helgason, Sigurdur},
   title={Groups and geometric analysis},
   series={Mathematical Surveys and Monographs},
   volume={83},
   publisher={American Mathematical Society, Providence, RI},
   date={2000},
   pages={xxii+667},
  }

\bib{KBeyond}{book}{
author={Knapp, Anthony W.},
title={Lie Groups Beyond an Introduction},
edition={second edition},
series={Progress in Mathematics 140},
publisher={Birkh\"auser},
address={Boston-Basel-Berlin},
date={2002},
}

\bib{Kobayashi-Stiefel}{article}{
AUTHOR = {Kobayashi, Toshiyuki},
     TITLE = {Singular unitary representations and discrete series for
              indefinite {S}tiefel manifolds {${\rm U}(p,q;{\bf F})/{\rm
              U}(p-m,q;{\bf F})$}},
   JOURNAL = {Mem. Amer. Math. Soc.},
    VOLUME = {95},
      YEAR = {1992},
    NUMBER = {462},
    PAGES = {vi+106},
}

\bib{Kob}{article}{
   author={Kobayashi, Toshiyuki},
   title={Discrete decomposability of the restriction of $A_{\germ
   q}(\lambda)$ with respect to reductive subgroups and its applications},
   journal={Invent. Math.},
   volume={117},
   date={1994},
   number={2},
   pages={181--205},
}

\bib{toshi:zuckerman}{article}
 {  AUTHOR = {Kobayashi, Toshiyuki},
     TITLE = {Branching problems of {Z}uckerman derived functor modules},
 conference={
      title={Representation Theory and Mathematical Physics},
   },
    book ={
 title={Contemp. Math.},
    VOLUME = {557},     
 PUBLISHER = {Amer. Math. Soc., Providence, RI},
},
      date = {2011},
PAGES = {23--40},
}

\bib{toshi:howe}{article}{
   AUTHOR = {Kobayashi, Toshiyuki},
     TITLE = {Global analysis by hidden symmetry},
 conference={title={Representation Theory, Number Theory, and Invariant Theory}},
    book = {
title= {Progr. Math.},
    VOLUME = {323},
}
     PAGES = {359--397},
 PUBLISHER = {Birkh\"auser/Springer},
      date= {2017},
}

\bib{invt}{article}{
author={Koornwinder, Tom H.},
title={Invariant differential operators on nonreductive homogeneous
spaces},
pages={i+15},
eprint={arXiv:math/0008116 [math.RT]},
}

\bib{KKOS}{article}{
author={Kr\"otz, Bernhard},
author={Kuit, Job J.},
author={Opdam, Eric M.},
author={Schlichtkrull, Henrik},
title={The infinitesimal characters of discrete series for real spherical spaces},
date={2017},
pages={40},
eprint={arXiv:1711.08635 [math RT]},
}

\bib{MS}{article}{
   author={Montgomery, Deane},
   author={Samelson, Hans},
   title={Transformation groups of spheres},
   journal={Ann. of Math. (2)},
   volume={44},
   date={1943},
   pages={454--470},
}

\bib{Oni69}{article}{
   author={Oni\v s\v cik, A. L.},
   title={Decompositions of reductive Lie groups},
   language={Russian},
   journal={Mat. Sb. (N.S.)},
   volume={80 (122)},
   date={1969},
   pages={553--599},
translation={ 
journal={Math. USSR Sb.},
volume={9},
date={1969},
pages={515--554},
},
}

\bib{OM}{article}{
   author={\=Oshima, Toshio},
   author={Matsuki, Toshihiko},
   title={A description of discrete series for semisimple symmetric spaces},
   conference={
      title={Group representations and systems of differential equations},
      address={Tokyo},
      date={1982},
   },
   book={
      series={Adv. Stud. Pure Math.},
      volume={4},
      publisher={North-Holland, Amsterdam},
   },
   date={1984},
   pages={331--390},
}

\bib{RossH}{article}{
   author={Rossmann, Wulf},
   title={Analysis on real hyperbolic spaces},
   journal={J. Funct. Anal.},
   volume={30},
   date={1978},
   number={3},
   pages={448--477},
}

\bib{SR}{article}{
author={S.~Salamanca-Riba},
title={On the unitary dual of real semisimple Lie groups snd the
  $A_{\mathfrak q}(\lambda)$ modules: the strongly regular case},
journal={Duke Math.\ J.},
volume={96},
date={1999},
pages={521--546},
}

\bib{StrH}{article}{
   author={Strichartz, Robert S.},
   title={Harmonic analysis on hyperboloids},
   journal={J. Functional Analysis},
   volume={12},
   date={1973},
   pages={341--383},
}

\bib{VGLn}{article}{
author={Vogan, David A., Jr.},
     TITLE = {The unitary dual of {${\rm GL}(n)$} over an {A}rchimedean
              field},
   JOURNAL = {Invent. Math.},
    VOLUME = {83},
      YEAR = {1986},
    NUMBER = {3},
     PAGES = {449--505},
}

\bib{Vunit}{article}{
author={Vogan, David A., Jr.},
title={Unitarizability of certain series of representations},
journal={Ann.\ of Math.},
volume={120},
date={1984},
pages={141--187},
}

\bib{Virr}{article}{
   author={Vogan, David A., Jr.},
   title={Irreducibility of discrete series representations for semisimple
   symmetric spaces},
   conference={
      title={Representations of Lie groups, Kyoto, Hiroshima, 1986},
   },
   book={
      series={Adv. Stud. Pure Math.},
      volume={14},
      publisher={Academic Press, Boston, MA},
   },
   date={1988},
   pages={191--221},
}

\bib{G2}{article}{
author={Vogan, David A., Jr.},
title={The unitary dual of $G_2$},
journal={Invent.\ Math.},
volume={116},
date={1994},
pages={677--791},
}

\bib{VZ}{article}{
author={Vogan, David A., Jr.},
author={Zuckerman, Gregg},
title={Unitary representations with non-zero cohomology},
journal={Compositio Math.},
volume={53},
date={1984},
pages={51--90},
}

\bib{Wolfflag}{article}{
author={J. Wolf},
title={The action of a real semisimple group on a complex flag
  manifold. I. Orbit structure and holomorphic arc components}, 
journal={Bull.\ Amer.\ Math.\ Soc.},
volume={75},
date={1969},
pages={1121--1237},
}

\bib{Wolf}{book}{
   author={Wolf, Joseph A.},
   title={Harmonic analysis on commutative spaces},
   series={Mathematical Surveys and Monographs},
   volume={142},
   publisher={American Mathematical Society, Providence, RI},
   date={2007},
   pages={xvi+387},
}

\end{biblist}
\end{bibdiv}

\end{document}